\numberwithin{equation}{section} 
\newtheorem{theorem}{Theorem}[section]
\newtheorem{lemma}[theorem]{Lemma}
\newtheorem{proposition}[theorem]{Proposition}
\newtheorem{claim}{Claim}[section]
\newtheorem{definition}{Definition} [section]
\newtheorem{remark}{Remark}[section]
\def\dx{\,dx}
\def\dt{\,dt}
\newcommand{\R}{\mathbb{R}}
\newcommand{\N}{\mathbb{N}}
\newcommand{\eps}{\varepsilon}
\newcommand{\vs}{\vspace{3mm}}
\DeclareMathOperator{\loc}{loc}
\DeclareMathOperator*{\osc}{{osc}}
\newcommand{\wt}{\widetilde}
\def\mean#1{\mathchoice%
          {\mathop{\kern 0.2em\vrule width 0.6em height 0.69678ex depth -0.58065ex
                  \kern -0.8em \intop}\nolimits_{\kern -0.4em#1}}%
          {\mathop{\kern 0.1em\vrule width 0.5em height 0.69678ex depth -0.60387ex
                  \kern -0.6em \intop}\nolimits_{#1}}%
          {\mathop{\kern 0.1em\vrule width 0.5em height 0.69678ex
              depth -0.60387ex
                  \kern -0.6em \intop}\nolimits_{#1}}%
          {\mathop{\kern 0.1em\vrule width 0.5em height 0.69678ex depth -0.60387ex
                  \kern -0.6em \intop}\nolimits_{#1}}}
\def\vintslides_#1{\mathchoice%
          {\mathop{\kern 0.1em\vrule width 0.5em height 0.697ex depth -0.581ex
                  \kern -0.6em \intop}\nolimits_{\kern -0.4em#1}}%
          {\mathop{\kern 0.1em\vrule width 0.3em height 0.697ex depth -0.604ex
                  \kern -0.4em \intop}\nolimits_{#1}}%
          {\mathop{\kern 0.1em\vrule width 0.3em height 0.697ex depth -0.604ex
                  \kern -0.4em \intop}\nolimits_{#1}}%
          {\mathop{\kern 0.1em\vrule width 0.3em height 0.697ex depth -0.604ex
                  \kern -0.4em \intop}\nolimits_{#1}}}
\newcommand{\aveint}[2]{\mathchoice%
          {\mathop{\kern 0.2em\vrule width 0.6em height 0.69678ex depth -0.58065ex
                  \kern -0.8em \intop}\nolimits_{\kern -0.45em#1}^{#2}}%
          {\mathop{\kern 0.1em\vrule width 0.5em height 0.69678ex depth -0.60387ex
                  \kern -0.6em \intop}\nolimits_{#1}^{#2}}%
          {\mathop{\kern 0.1em\vrule width 0.5em height 0.69678ex depth -0.60387ex
                  \kern -0.6em \intop}\nolimits_{#1}^{#2}}%
          {\mathop{\kern 0.1em\vrule width 0.5em height 0.69678ex depth -0.60387ex
                  \kern -0.6em \intop}\nolimits_{#1}^{#2}}}
\begin{document}

\title[Boundary regularity for phase transitions]{Existence and boundary regularity\\ for degenerate phase transitions}

\author[P. Baroni]{Paolo Baroni}
\address{Department of Mathematical, Physical and Computer sciences, University of Parma\\
I-43124 Parma, Italy}
\email{paolo.baroni@unipr.it}

\author[T. Kuusi]{Tuomo Kuusi}
\address{Department of Mathematics and Systems Analysis, Aalto University, P.O. Box 1100, 00076 Aalto, Finland}
\email{tuomo.kuusi@aalto.fi}

\author[C. Lindfors]{Casimir Lindfors}
\address{Department of Mathematics and Systems Analysis, Aalto University, P.O. Box 1100, 00076 Aalto, Finland}
\email{casmir.lindfors@aalto.fi}

\author[J.M. Urbano]{Jos\'e Miguel Urbano}
\address{CMUC, Department of Mathematics, University of Coimbra, 3001-501 Coimbra, Portugal.}
\email{jmurb@mat.uc.pt}

\allowdisplaybreaks
\date{\today}

\keywords{Stefan problem, degenerate equations, intrinsic scaling, boundary modulus of continuity}
\subjclass[2010]{Primary 35B65. Secondary 35A01, 35K65, 80A22.}

\begin{abstract}
We study the Cauchy-Dirichlet problem associated to a phase transition modeled upon the degenerate two-phase Stefan problem. We prove that weak solutions are continuous up to the parabolic boundary and quantify the continuity by deriving a modulus. As a byproduct, these {\em a priori} regularity results are used to prove the existence of a so-called physical solution.
\end{abstract}

\maketitle

\tableofcontents

\section{Introduction}

In this paper we complete the\textit{ tour de force}, initiated in \cite{BKU}, concerning the regularity of weak solutions for the degenerate ($p\geq 2$) two-phase Stefan problem \cite{Salsa12,UCon}
\begin{equation}\label{CD}
 \begin{cases}
 \partial_t \big[u+H_0(u)\big] \ni {\rm div}\, |Du|^{p-2}Du \quad&\text{in $\Omega_T$}\\[5pt]
 u= g&\text{on $\partial_p\Omega_T$\,,}
\end{cases}
\end{equation}
by proving the continuity up to the boundary. Using this regularity, we also obtain an existence result. Here, $\Omega_T:=\Omega\times(0,T]$ denotes the space-time cylinder, with $\Omega\subset\R^n$, $n\geq2$, a bounded domain, $\partial_p\Omega_T$\ is its parabolic boundary (see Paragraph \ref{Notation} for the relevant definitions), $H_0$ is the Heaviside graph centered at the origin, and $g$ is a continuous boundary datum.

The outcome of our effort is two-fold: on the one hand, we prove sharp {\em a priori} estimates for solutions of \eqref{CD}, and obtain the boundary continuity, quantified through a modulus, assuming a mild geometric condition on $\Omega$. On the other hand, we use this ``almost uniform'' modulus of continuity at the boundary, together with the interior modulus of continuity we deduced in \cite{BKU}, to build a solution to \eqref{CD}, which is continuous up to the boundary and enjoys the same modulus of continuity.

\vs

Problem \eqref{CD} when $p=2$ is the celebrated two-phase Stefan problem. The boundary continuity in this case was proven by Ziemer \cite{Ziemer},  for more general structures albeit with linear growth with respect to the gradient, but without an explicit, uniform modulus of continuity. This would be provided by DiBenedetto who, in \cite{DiB3}, proved the uniform continuity up to the boundary for solutions to \eqref{CD} (more precisely, for the forthcoming \eqref{the general equation} for $p=2$, which also takes into account lower order terms) with the modulus of continuity being of iterated logarithmic type in the particular case of H\"older continuous boundary datum.

Our goal is to extend the result to the degenerate case $p>2$ and to provide, already in the non-degenerate case $p=2$, a more transparent proof of the reduction of the oscillation at the lateral boundary.

More generally, we shall consider the extension of~\eqref{CD}$_1$
\begin{equation} \label{the general equation}
\partial_t \big[\beta(u)+ H_a(\beta(u))\big] \ni \mathrm{div}\, \mathcal{A}(x,t,u,Du) \quad\text{ in $\;\Omega_T$},
\end{equation}
where $\beta$ is a sufficiently smooth function, the Heaviside graph centered at $a\in\R$ is defined by 
\begin{equation}\label{Heavi}
H_a(s)=
\begin{cases}
0 & \text{if $s<a$}\,\\[1pt]
[0,1] & \text{if $s=a$}\,\\[1pt]
1 & \text{if $s> a$},
 \end{cases}
\end{equation}
and the vector field $\mathcal{A}$ satisfies the usual $p$-growth conditions (see Paragraph \ref{assump} for the exact assumptions). Our first result reads as follows.

\begin{theorem}\label{main}
Under the assumptions described in Paragraph \ref{assump}, given a boundary datum $g\in C(\partial_p\Omega_T)$, there exists $u \in C\left( \overline{\Omega_T} \right)$ solving the Cauchy-Dirichlet problem for \eqref{the general equation}, in the sense that $u$ is a local weak solution of the equation and $u=g$ on $\partial_p\Omega_T$. We call $u$ a physical solution.
\end{theorem}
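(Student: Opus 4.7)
The strategy is the classical vanishing-regularization approach for multi-valued enthalpy problems. First, smooth the graph $H_a$ by a family of nondecreasing $H_a^\eps\in C^\infty(\R)$ with $0\le H_a^\eps\le 1$, $H_a^\eps\equiv 0$ on $(-\infty,a-\eps]$ and $H_a^\eps\equiv 1$ on $[a+\eps,\infty)$, and consider the approximating Cauchy-Dirichlet problem
\[
\partial_t\bigl[\beta(u_\eps)+H_a^\eps(\beta(u_\eps))\bigr]=\mathrm{div}\,\mathcal{A}(x,t,u_\eps,Du_\eps)\ \text{in}\ \Omega_T, \qquad u_\eps=g\ \text{on}\ \partial_p\Omega_T.
\]
Since $s\mapsto \beta(s)+H_a^\eps(\beta(s))$ is smooth and strictly increasing, this is a standard quasilinear parabolic problem of $p$-Laplace type with monotone time part, for which existence of a weak solution $u_\eps\in C(\overline{\Omega_T})$, attaining the datum $g$ continuously, follows from classical monotone-operator/Galerkin theory, possibly after a preliminary mollification of $g$ which can be removed a posteriori using the boundary estimate below.

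The decisive point is that the a priori estimates available for $u_\eps$ do not see the regularization parameter. The interior modulus of continuity of \cite{BKU} and the boundary modulus proved in the main technical portion of the present paper apply to $u_\eps$ with constants depending only on the structural data, on $\|g\|_\infty$, on a modulus of continuity of $g$, and on the geometric condition on $\partial\Omega$. Consequently $\{u_\eps\}_{\eps>0}$ is uniformly bounded and equicontinuous on $\overline{\Omega_T}$, and Arzel\`a--Ascoli produces a subsequence converging uniformly on $\overline{\Omega_T}$ to some $u\in C(\overline{\Omega_T})$; the identity $u=g$ on $\partial_p\Omega_T$ passes to the limit automatically, and $u$ inherits the sum of the two moduli.

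The energy estimate yields a uniform bound of $Du_\eps$ in $L^p(\Omega_T)$, so that, along a further subsequence, $Du_\eps\rightharpoonup Du$ weakly in $L^p$; a by-now standard Minty-type monotonicity argument, combined with the uniform convergence of $u_\eps$, identifies the weak limit of $\mathcal{A}(x,t,u_\eps,Du_\eps)$ as $\mathcal{A}(x,t,u,Du)$. The uniformly bounded sequence $H_a^\eps(\beta(u_\eps))$ admits a weak-$\ast$ limit $\chi\in L^\infty(\Omega_T;[0,1])$; the uniform convergence $\beta(u_\eps)\to\beta(u)$ together with the shape of $H_a^\eps$ forces $\chi=1$ a.e. on $\{\beta(u)>a\}$ and $\chi=0$ a.e. on $\{\beta(u)<a\}$, so that $\chi\in H_a(\beta(u))$ a.e. Passing to the limit in the distributional formulation then shows that $u$ is a local weak solution of \eqref{the general equation}, hence the sought physical solution.

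The main obstacle is precisely the $\eps$-uniformity of the continuity estimate up to the parabolic boundary: since $H_a^\eps$ has no $\eps$-independent modulus of continuity, continuity of the limit $u$ cannot be obtained by general compactness arguments and must be extracted from the equation itself via intrinsic scaling adapted to the singular/degenerate $p$-Laplacian combined with the multi-valued graph. This quantitative boundary modulus, which is the technical heart of the paper, is exactly what closes the scheme; once it is in hand, the existence assertion of Theorem \ref{main} follows by the compactness procedure described above.
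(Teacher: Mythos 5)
Your overall architecture coincides with the paper's: mollify $H_a$, solve the regularized Cauchy--Dirichlet problems, use the interior modulus of \cite{BKU} together with the boundary estimates of this paper to get compactness in $C(\overline{\Omega_T})$, and then pass to the limit in the weak formulation with a weak-$\ast$ limit $\chi\in H_a(\beta(u))$ identified off the level set $\{\beta(u)=a\}$ by uniform convergence. One inaccuracy first: the continuity estimates are \emph{not} $\eps$-independent in the form you state. The oscillation reduction at each scale requires $\eps\leq\widetilde\omega_j/2$ (see Proposition \ref{interior.continuity.iterated}), so the iteration stops once the target oscillation is comparable to the regularization parameter; what is actually available is the ``almost equicontinuity'' \eqref{equi.modulus}, namely $|u_\eps(z)-u_\eps(z')|\leq\bar\omega(|z-z'|)+h(\eps)$ with $h(\eps)\to0$, and the compactness step must be run with a correspondingly modified Ascoli--Arzel\`a argument, as in Paragraph \ref{AAArgument}. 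This is repairable and your conclusion survives, but the claim that the estimates ``do not see'' $\eps$ is not literally true.

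The genuine gap is the identification of the limit flux. You invoke a ``standard Minty-type monotonicity argument'', but that is precisely the step which is not standard here and you leave it unaddressed: Minty requires the lim-sup inequality $\limsup_{\eps}\int\langle\mathcal A(\cdot,\cdot,u_\eps,Du_\eps),D(u_\eps-u)\rangle\zeta\,dx\,dt\leq0$, obtained by testing the $\eps$-equation with $(u_\eps-u)\zeta$, and the resulting pairing of $\partial_t\big[\beta(u_\eps)+H_{a,\eps}(\beta(u_\eps))\big]$ with $u_\eps-u$ is delicate because $u$ has no time regularity and the enthalpy becomes multivalued in the limit; this needs Alt--Luckhaus-type convexity and time-mollification machinery, none of which you indicate, and it must moreover be combined with \eqref{continuityA} since $\mathcal A$ depends on $u$ and is only strictly, not strongly, monotone in $\xi$. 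The paper avoids Minty altogether: away from the jump the functions $w_i=\beta(u_i)$ solve $p$-Laplacian-type equations and a Boccardo--Gallou\"et truncation argument (testing with $T_\varsigma(w_j-w_k)\phi^p$) yields convergence in measure and hence a.e.\ convergence of the gradients there, while near the jump the test function $T_{2\sigma}(w_i-a)\phi^p$ gives the uniform smallness \eqref{big.effort} of the gradient energy on $\{|w_i-a|\leq2\sigma\}$, so that in the splitting \eqref{app.ML} the corresponding piece of the elliptic term is $O(\sigma^{1/p'})$ uniformly in $i$ and disappears as $\sigma\to0$, using also that $Du=0$ a.e.\ on $\{\beta(u)=a\}$. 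Either supply the missing treatment of the parabolic term in your Minty scheme, or replace that step by the paper's localization-plus-truncation argument.
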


We remark that the solution we build has the interior modulus of continuity described in \cite{BKU}, where we assumed the existence of a solution built in the way described in this paper. Our other main result concerns a precise modulus of continuity up to the boundary for the physical solution obtained in Theorem \ref{main}, in the case the regularity of the boundary datum does not overcome a threshold we are going to describe. Let  $\omega_g$ be a concave modulus of continuity for $g$:
\begin{equation}\label{mod.g}
\sup_{(x_0,t_0)\in \partial_p\Omega_T}\ \osc_{\overline{Q_r}(x_0,t_0)\cap\partial_p\Omega_T} g \leq \omega_g(r). 
\end{equation}
Given a point $(x_0,t_0) \in \R^{n+1}$ and a radius $r>0$, $Q_r(x_0,t_0)$ is the standard (symmetric) parabolic cylinder
\begin{equation*}
 Q_r(x_0,t_0):=B_r(x_0)\times (t_0-r^p,t_0+r^p);
\end{equation*}
$\overline{Q_r}(x_0,t_0) $ is its closure and, for a constant $\varrho >0$, $Q_r^{\varrho}(x_0,t_0)$ is the stretched cylinder
\[
Q_r^{\varrho}(x_0,t_0):=B_r(x_0)\times (t_0-\varrho^{2-p}r^p,t_0+\varrho^{2-p}r^p).
\]
Finally, let us introduce $\bar q\equiv \bar q(n,p)\geq2$ as
\begin{equation} \label{eq:lat q cond}
\bar q:=
\begin{cases}
  1+\displaystyle{\frac np} \qquad&  \text{for }p<n\,,\\[3pt]
  2\qquad & \text{for }p\geq n\,.
\end{cases}
\end{equation}
We are ready now to state
\begin{theorem}\label{boundary.continuity}
Let $u$ be the physical solution of Theorem \ref{main}, and let $(x_0,t_0)\in\partial_p\Omega_T$ and $R_0\in(0,r_\Omega]$ be fixed ($r_\Omega$ will be introduced in \eqref{eq:lateral density}). Then for every 
\begin{equation}\label{beta}
\alpha\in \bigg(0,\frac1{p'\bar q}\bigg)
\end{equation}
 there exist constants $\vartheta,\lambda_0,\tilde\delta$ depending on $\alpha$ and the data, such that if we set
\begin{equation} \label{choice.modulus.boundary}
\omega (r) =\frac1\vartheta\biggl[\frac{1}{\log\big(\log(\frac{\lambda_0R_0}{r})\big)}\biggr]^\alpha
\end{equation}
for $r\in(0,R_0]$, and we suppose that
\begin{equation}\label{ass.ug}
\osc_{Q_{R_0}^{\tilde \delta \omega_0}(x_0,t_0)\cap\Omega_T} u\leq \omega_0\qquad \text{and}\qquad  \omega_g\big((r/R_0)^{1-\gamma}\big)\leq M \omega(r)
\end{equation}
hold for some $\omega_0>0,M>0$ and $\gamma\in(0,1)$, then 
\begin{equation}\label{modulus.u}
 \osc_{Q_r(x_0,t_0)\cap\Omega_T} u \leq c\,\omega(r)
 \end{equation}
for all $r \in (0,R_0]$, with $c$ depending on $\gamma,M,R_0, \omega_0$ and the data.
\end{theorem}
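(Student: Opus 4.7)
The plan is to prove a reduction-of-oscillation step at boundary points of $\Omega_T$ and then iterate it to build the iterated-logarithm modulus \eqref{choice.modulus.boundary}. Concretely, I would construct sequences $r_k\searrow 0$ and $\omega_k = \eta^k\omega_0$, with $\eta \in (0,1)$ a universal constant, and prove by induction that
\begin{equation*}
\osc_{Q_{r_k}^{\tilde\delta\omega_k}(x_0,t_0)\cap\Omega_T} u\leq \omega_k.
\end{equation*}
At the initial boundary ($t_0 = 0$) the iteration is essentially driven by the boundary datum: the hypothesis $\omega_g((r/R_0)^{1-\gamma})\leq M\omega(r)$ in \eqref{ass.ug} transfers the datum's modulus to the target $\omega$, and a comparison-style propagation in the half-neighborhood is sufficient. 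The main work concerns the lateral boundary.

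For the lateral step, I would distinguish two alternatives inside an intrinsic cylinder $Q_r^{\tilde\delta\omega}(x_0,t_0)$, tailored to the two-phase nature of \eqref{the general equation}. In the \emph{non-degenerate alternative}, where $|u-a|$ is bounded away from zero on a fraction of the cylinder, the graph $s\mapsto\beta(s)+H_a(\beta(s))$ is locally single-valued and smooth, so the problem behaves like a standard degenerate $p$-parabolic equation and a boundary De~Giorgi--Moser iteration — powered by the geometric density condition on $\Omega^c$ encoded by $r_\Omega$ and by the boundary control from \eqref{ass.ug} — reduces the oscillation. In the \emph{phase-transition alternative}, where $u$ clusters near $a$, I would exploit the jump of $H_a$ as a reservoir of effective mass: an energy estimate against a cut-off test function localizes the transition and forces the level sets $\{u>a\}$ or $\{u<a\}$ to shrink in a definite measure-theoretic sense. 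Either way one extracts a universal reduction factor $\eta$, but the spatial contraction $r_{k+1}/r_k$ must absorb the intrinsic time scaling $\omega_k^{2-p}$; this dictates $r_{k+1}/r_k\sim \omega_k^{(p-2)/p}$ up to an exponent coming from the Sobolev embedding, which is why $\bar q$ appears.

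Summing these recurrences yields $\log\log(R_0/r_k)\sim k$ and $\omega_k\sim\eta^k$, so that $\omega_k\sim[\log\log(R_0/r_k)]^{-\alpha}$, which is precisely \eqref{choice.modulus.boundary}; the exponent restriction $\alpha<1/(p'\bar q)$ in \eqref{beta} is the summability threshold that makes the geometric series of time-spans converge, so that the cylinders are indeed nested. A final step interpolates this boundary estimate with the interior modulus from \cite{BKU} in order to upgrade the oscillation bound to all points of $Q_r(x_0,t_0)\cap\Omega_T$, producing \eqref{modulus.u}.

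The hard part will be the phase-transition alternative at the lateral boundary: one must squeeze a uniform reduction of oscillation out of the jump in $H_a$ without ever leaving the intrinsic geometry of $p$-parabolic equations, since the cylinders themselves depend on the current oscillation $\omega_k$. For $p=2$ this collapses to DiBenedetto's argument \cite{DiB3} in standard cylinders; for $p>2$, each step must be executed in a time length $\sim\omega_k^{2-p}r_k^p$, and the interplay between the scale-dependent geometry and the Heaviside jump is precisely what slows the convergence to an iterated-logarithm rate rather than a polynomial one. Keeping the constants $\vartheta,\lambda_0,\tilde\delta$ uniform across all iterations — and consistent with the initial oscillation hypothesis in \eqref{ass.ug} — is the central technical difficulty.
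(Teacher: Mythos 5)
Your scheme rests on extracting ``a universal reduction factor $\eta$'' per step, with $\omega_k=\eta^k\omega_0$ and a spatial contraction $r_{k+1}/r_k\sim\omega_k^{(p-2)/p}$, and this is precisely what is \emph{not} available for this problem; it is also arithmetically incompatible with the modulus you are trying to prove. In the worst lateral alternative --- jump located near the supremum and a high occupation of the transition region, i.e.\ \eqref{eq:b1 lat} together with \eqref{lat alt.1} --- the Caccioppoli inequality \eqref{caccioppoli estimate} carries the extra term $\int_k^v H_{b,\eps}'(\xi)(\xi-k)_+\,d\xi$, which is of order $\omega$ rather than of the order $\omega^2$ or $\omega^p$ that the parabolic rebalancing tolerates; to absorb it one is forced to the long time scale $T^3=\widetilde\omega^{1-p}r^p$ and to a measure estimate \eqref{eq:lat dens 2} that becomes small only after descending through roughly $[\eps_1\omega]^{-p'q}$ dyadic levels. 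The outcome is that each step reduces the oscillation only by the exponentially small amount $\eps_1\widetilde\omega$ with $\widetilde\omega=\eps_1\omega\exp(-[\eps_1\omega]^{-p'q})$ as in \eqref{tilde.omega}, see \eqref{allinall}, and the radii must contract super-exponentially, $R_{j+1}/R_j=\exp(-\tfrac{\vartheta}{\alpha}[\vartheta\omega_j]^{-1/\alpha})$ as in \eqref{choices_j}; the admissible sequences $\{\omega_j\}$ satisfy \eqref{omega_j}, so $\omega_{j+1}/\omega_j\to1$ and the decay is sub-geometric, $\omega_j\sim j^{-\alpha}$, not $\eta^j$. Your own bookkeeping shows the problem: with $\omega_k=\eta^k$ and $r_{k+1}/r_k\sim\omega_k^{(p-2)/p}$ one gets $\log(R_0/r_k)\sim k^2$, hence a modulus of the type $\exp\bigl(-c\sqrt{\log(1/r)}\bigr)$; and even granting your claim $\log\log(R_0/r_k)\sim k$, geometric $\omega_k$ would give a negative power of a \emph{single} logarithm, never $[\log\log(1/r)]^{-\alpha}$. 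The iterated logarithm in \eqref{choice.modulus.boundary} is exactly the signature of the exponentially small decrement $\widetilde\omega$, which your argument never produces.

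Two further points. First, the restriction $\alpha<1/(p'\bar q)$ is not a ``summability of time-spans/nestedness'' condition: it guarantees that $q=1/(p'\alpha)>\bar q$, which is what makes $(1-1/q)(2-1/\kappa)>1$ in \eqref{kappaq} and lets the Sobolev--De Giorgi iteration in the alternative \eqref{lat alt.2} close with a superlinear gain (and, through \eqref{lat alt.1}--\eqref{eq:lat dens 2}, calibrates the smallness $[\eps_1\omega]^q$ against the degeneracy). Second, the theorem concerns the physical solution $u$, which is only defined as a uniform limit of the regularized solutions; all the testing, the logarithmic lemma at the initial boundary, and the pointwise constructions must be carried out on $w_\eps=\beta(u_\eps)$ (for which $\mathcal H$ is a diffeomorphism and the solution is continuous up to $\partial_p\Omega_T$), uniformly in $\eps$ up to an error $h(\eps)$, and only then passed to the limit; moreover the datum hypothesis \eqref{ass.ug}$_2$ enters through the intrinsic condition \eqref{e.g_quantitative} after a rescaling of the solution, not merely through a comparison at the initial boundary. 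Running the oscillation scheme directly on $u$ with the multivalued graph $H_a$, as your proposal implicitly does, leaves these steps unjustified.
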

The previous natural result tells that once the boundary datum is more regular than the solution, even in the case of smooth $g$, then the solution still has modulus of continuity $\omega$.  Clearly, a H\"older continuous function $g$ is an example of boundary datum satisfying \eqref{ass.ug}$_2$.

\subsection{Main assumptions and the concept of solution}\label{assump}

Throughout the paper, $\Omega$  is assumed to satisfy the following (standard in this context) outer density condition: there exist $\delta \in (0,1)$ and $r_\Omega>0$ such that, for $x_0 \in \partial \Omega$,
\begin{equation} \label{eq:lateral density}
| B_r(x_0) \cap \Omega| \leq (1-\delta) |B_r(x_0)|,  \qquad  \forall r \in (0,r_\Omega) .
\end{equation}
The function $\beta:\R\to\R$ is an {\em increasing} $C^1$-diffeomorphism satisfying the  bi-Lipschitz condition
\begin{equation}\label{bilipischitz}
\Lambda^{-1} |u-v| \leq |\beta(u) - \beta(v)| \leq \Lambda |u-v|\,, 
\end{equation}
included, as previously done in \cite{DiB2, Noche87}, to account for the thermal properties of the medium, which can change slightly with respect to the temperature.

The vector field $\mathcal{A}$ is measurable with respect to the first two variables and continuous
with respect to the last two, satisfying additionally the following standard growth, coercivity and monotonicity assumptions:
\begin{equation}\label{p.laplacian.assumptions}
\begin{split}
\qquad|\mathcal{A}(x,t,u,\xi)| \leq \Lambda |\xi|^{p-1} \,, \qquad \langle \mathcal{A}(x,t,u,\xi) , \xi \rangle \geq \Lambda^{-1} |\xi|^p   \,, \\[3pt]
\hspace{-2cm}\langle\mathcal{A}(x,t,u,\xi)-\mathcal{A}(x,t,u,\zeta),\xi-\zeta\rangle>0\,, \qquad\qquad
\end{split}
\end{equation}
for $p\geq2$, for almost every $(x,t) \in \Omega_T$ and for all $(u,\xi,\zeta) \in \mathbb{R} \times  \mathbb{R}^{2n}$, with $\zeta\neq\xi$, for a given constant $\Lambda \geq 1$. It will be useful for future reference to make explicit the modulus of continuity of $\mathcal A$ with respect to the last two variables; we suppose that there exist two concave functions $\omega_{\mathcal A,u},\omega_{\mathcal A,\xi}:(0,\infty)\to[0,1]$, such that $\lim_{\rho\searrow0}\omega_{\mathcal A,u}(\rho)=\lim_{\rho\searrow0}\omega_{\mathcal A,\xi}(\rho)=0$, and a function $K:[0,\infty)\times [0,\infty)\to[1,\infty)$, increasing separately in the two variables, such that
\begin{multline}\label{continuityA}
\sup_{(x,t)\in\Omega_T}|\mathcal A(x,t,u,\xi)- \mathcal A(x,t,v,\xi)|\\
\leq K(M,\tilde M)\,\Big[\omega_{\mathcal A,u}\big(|u-v|\big)+\omega_{\mathcal A,\xi}\big(|\xi-\zeta|\big)\Big]
\end{multline}
for all $(u,v,\xi,\zeta)\in \R^{2(n+1)}$ such that  $|u|+|v|\leq M$ and $|\xi|+|\zeta|\leq \tilde M$.

\begin{definition}\label{deff}
A {\em local weak solution} of equation \eqref{the general equation} is a pair $(u,v)$, with 
\[
v\in \beta(u)+H_a(\beta(u)),
\]
in the sense of graphs, such that
\[
u\in L^p_{\loc}(0,T;W^{1,p}_{\loc}(\Omega))\cap L^\infty_{\loc}(0,T;L^2_{\loc}(\Omega))=: V^{2,p}_{\loc}(\Omega_T)
\]
and the integral identity
\begin{equation}
\int_{\mathcal K}[ v\,\varphi](\cdot,\tau) \dx\biggr|_{\tau=t_1}^{t_2} +\int_{\mathcal K\times[t_1,t_2]} \big[-v\, \partial_t\varphi  +\langle \mathcal{A}(\cdot, \cdot, u,Du) ,D\varphi\rangle\big]\dx\dt= 0
\label{weakdef}
\end{equation}
holds for all $\mathcal K\Subset\Omega$ and almost every $t_1,t_2\in\R$ such that $[t_1,t_2]\Subset(0,T]$, and for every test function $\varphi \in L^p_{\loc}(0,T;W^{1,p}_0(\mathcal K))$ such that $\partial_t\varphi\in L^2(\mathcal K\times[t_1,t_2])$.
\end{definition}

\begin{remark}
Observe that also $v \in  L^\infty_{\loc}(0,T;L^2_{\loc}(\Omega))$ and that the test functions are in $C\left( [t_1,t_2]; L^2 (\mathcal K) \right)$, so every term in \eqref{weakdef} has a meaning.
\end{remark}

\subsection{Strategy of the proof}\label{strategy}
In order to perform a standard reduction of the oscillation, at least in cylinders centered on the lateral boundary, we shall consider {\rm three different alternatives}. The reduction of the oscillation in the interior has been proven in \cite{BKU}, while at the initial boundary it is a simple consequence of the logarithmic estimate of Lemma \ref{lemma: log lemma}. Let us give a brief and formal description of the structure of the proof. Consider equation \eqref{the general equation}; clearly we can suppose that the jump is met by the values of the solution in the cylinder considered, otherwise solutions are continuous since they solve $p$-Laplacian type equations with continuous Cauchy-Dirichlet data. The proof consists in the separate analysis of three alternatives.

Our first alternative \eqref{eq:b2 lat} states that the jump is far to the supremum of $u$ on the cylinder. In this case, we can reduce the supremum remaining ``above'' the jump, and here the equation behaves like the $p$-Laplace equation.

The second alternative \eqref{eq:b1 lat} instead means that we are considering the case where the jump is close to the supremum of $u$, and thus it is really influencing the behaviour of the solution.  In this case, we set two further alternatives, \eqref{lat alt.1} and \eqref{lat alt.2}: the latter describes the case where the solution has low energy levels close to the jump for all times (notice the relation between the condition appearing therein and the left-hand side of the energy estimate in \eqref{caccioppoli estimate}). Here the equation is still very similar to the $p$-Laplace equation and indeed we reduce the oscillation in $p$-Laplacian type cylinders. If this is not the case, that is if the worst case scenario \eqref{lat alt.1} happens, solutions are less regular. \eqref{lat alt.1} encodes the fact that the solution has a high peak of energy close to the jump; in this case, the presence of the jump is significant and therefore the geometry employed must rebalance the further degeneracy it produces.

The implementation of what is described above is quite technical, primarily due to the fact that, as is usual in degenerate evolutionary problems, time scales must depend on the solution itself. We need to define three different time scales to tackle the three different scenarios, and these are not trivial already in the non-degenerate case $p=2$.  Moreover, we have to introduce the exponentially small (in terms of the oscillation in the cylinder we are considering) quantity $\widetilde\omega$ in \eqref{tilde.omega} and this explicitly reflects in the $\log-\log$ modulus of continuity we obtain.

\section{Preparatory material}

\subsection{Approximation of the problem}\label{sec: approx}
Let $\rho_\eps$ be the standard symmetric, positive, one dimensional mollifier, supported in $(-\eps,\eps)$, obtained via rescaling of $\rho\in C^\infty_c(-1,1)$. We set
\begin{equation}\label{regularized}
H_{a,\eps}(s) := (\rho_\eps \ast H_a)(s),\quad s\in\R,
\end{equation}
and observe that $H_{a,\eps}$ is smooth and
\begin{equation}\label{support.H}
{\rm supp}\, H_{a,\eps}'\subset (a-\eps,a+\eps),\qquad \int_{\R}H_{a,\eps}'(v)\,dv=1. 
\end{equation}
Those will be the unique properties of $H_{a,\eps}$ we will use in the proofs of Section \ref{sec.three} (actually, we use the fact that the integral is bounded from above by one). Let $u_\eps$ solve the approximate Cauchy-Dirichlet problem
\begin{equation*}
\begin{cases}
\partial_t \big[\beta(u_\eps)+ H_{a,\eps}(\beta (u_\eps))\big] - \mathrm{div}\, \mathcal{A}(x,t,u_\eps,Du_\eps) = 0\quad&\text{in $\Omega_T$},\\[5pt] 
u_\eps= g&\text{on $\partial_p\Omega_T$}.
\end{cases}
\end{equation*}
Setting
\begin{equation}\label{w}
w_\eps := \beta(u_\eps)\,, \qquad w_0:=\beta(g),
\end{equation}
we arrive at the regularized Cauchy-Dirichlet problem
\begin{equation}\label{Approx.CD.reg}
\begin{cases}
\partial_t w_\eps -  \mathrm{div}\, \bar{\mathcal{A}}(x,t,w_\eps,Dw_\eps) = -  \partial_t H_{a,\eps}(w_\eps)\quad&\text{in $\Omega_T$},\\[5pt] 
w_\eps= w_0&\text{on $\partial_p\Omega_T$},
\end{cases}
\end{equation}
where
\begin{equation} \label{eq:mollified vf}
\bar{\mathcal{A}}(x,t,\mu,\xi):= \mathcal{A}\big(x,t,\beta^{-1}(\mu), [\beta^\prime(\beta^{-1}(\mu))]^{-1}\xi\big),
\end{equation}
for a.e. $(x,t) \in \Omega_T$, $(\mu,\xi) \in \R \times  \R^n$. Observe that the growth and ellipticity bounds for $\bar{\mathcal{A}}$ are inherited from $\mathcal{A}$ and from the two-sided bound for 
$\beta'$: indeed
\begin{equation}\label{tilde inf}
|\bar{ \mathcal{A}}(x,t,\mu,\xi)| \leq \Lambda^p |\xi|^{p-1} \,, \qquad \langle \bar{\mathcal{A}}(x,t,\mu,\xi) , \xi \rangle \geq \Lambda^{-p}
|\xi|^p
\end{equation}
hold for almost every $(x,t) \in \Omega_T$ and for all $(\mu,\xi) \in \R \times  \R^n$. Moreover, 
$\bar{\mathcal{A}}$ is clearly continuous with respect to the last two variables since $\beta$ is a $C^1$-diffeomorphism. 

By standard regularity theory for degenerate parabolic equations, see~\cite{DiBe93, KMWolff, Urba08}, we have that the solution $w_\eps$ of \eqref{Approx.CD.reg}$_1$ is H\"older 
continuous since $\beta(u_\eps)+ H_{a,\eps}(\beta (u_\eps))$ is now a diffeomorphism. This kind of regularity depends however on the regularization and as such it will deteriorate as $\eps \downarrow 0$. Nonetheless, we may assume that the solution of the regularized equation is continuous having, in particular, pointwise values. Sometimes we will use the compact notation
\begin{equation}\label{Acca}
\mathcal H(s):=s+ H_{a,\eps}(s). 
\end{equation}

\subsection{Scaling of the equation}\label{rescaling}
It will be useful later on to rescale the solution of \eqref{Approx.CD.reg} in the following way:  define, for $\lambda\geq1$, 
\begin{equation*} 
\hat v(y,\tau) :=  \frac{w_\eps(y,t_0 +  \lambda^{2-p}(\tau-t_0))}{\lambda}
\end{equation*}
in $E=\Omega\times(t_0-\lambda^{p-2}t_0,t_0+\lambda^{p-2}(T-t_0)]$. If we set
\[
\hat H(s):=\frac{H_{a,\eps}(\lambda s)}\lambda=\frac{H_{a/\lambda,\eps/\lambda}(s)}\lambda, \qquad \hat g(y,\tau):=\frac{ w_0(y,t_0 +  \lambda^{2-p} (\tau-t_0))}{\lambda},
\]
it is then easy to see that $\hat v$ solves the Cauchy-Dirichlet problem
\begin{equation}\label{Approx.CD.reg2}
\begin{cases}
\partial_\tau \hat v-{\rm div}_{y}\,\hat{\mathcal A}(y,\tau,\hat v,D\hat v)=-\partial_\tau\hat H(\hat v) \quad&\text{in $E$},\\[5pt] 
\hat v= \hat g&\text{on $\partial_p E$},
\end{cases}
\end{equation}
with $\hat{\mathcal A}(y,\tau,\mu,\xi):=\bar{\mathcal A}(y,\tau,\lambda \mu,\lambda\xi)/\lambda^{p-1}$ having the same structural properties as $\mathcal A$. Note that in particular we have
\[
{\rm supp}\,\hat H'\subset \left( \frac{a-\eps}\lambda,\frac{a+\eps}\lambda \right)\qquad\text{and}\qquad \int_{\R}\hat H'(\sigma)\,d\sigma\leq 1.
\]

\subsection{Sobolev's inequalities}\label{Sobo.paragraph}
We recall here, in a unified and slightly formal setting, some parabolic Sobolev-type inequalities that will be useful in the rest of the paper. To start with, we recall that we can denote the Sobolev conjugate exponent of $p$ as $p^*=\kappa p$, where 
\begin{equation} \label{eq:kappa}
\kappa := \begin{cases}
  \frac{n}{n-p} &\qquad  \text{for }p<n\,,   \\[3pt]
  \mbox{any number $>1$} &\qquad \text{for }p=n\,, \\[3pt]
   +\infty &\qquad\text{for } p>n\,.
\end{cases}
\end{equation}
For a ball $B$ in $\R^n$ and an interval $\Gamma$ of $\R$, we consider functions
\begin{multline*}
w\in L^p(\Gamma; W^{1,p}(B))\cap L^\infty(\Gamma; L^2(B))\\
\text{and}\qquad \phi\in C^\infty(B\times\Gamma),\quad \phi(\cdot,\tau)\in C^{\infty}_c(B) \qquad \text{for all $\tau\in\Gamma$;} 
\end{multline*}
applying H\"older's inequality with respect to the time variable with conjugate exponents $\kappa,\kappa'$, and afterwards the standard Sobolev's inequality slice-wise for functions in $W^{1,p}_0(B)$, we infer
\begin{multline}\label{Sobolev}
\mean{B\times \Gamma} w^{2(1-1/\kappa)  + p } \phi^{p(2-1/\kappa)}\dx \dt   \\
\leq  
c(n,p)\, |B|^{p/n} |\Gamma|^{1-1/\kappa} \biggl[ \frac{1}{|\Gamma|} \sup_{\tau\in \Gamma} \mean{B}  [w^2\phi^p](\cdot,\tau) \dx \biggr]^{1-1/\kappa} \mean{B\times\Gamma} {|D(w\phi)|}^p \dx \dt.
\end{multline}
From now on, we shall make use of the formal agreement that when $\kappa=\infty$, then $1/\kappa=0$, $\kappa/(\kappa-1)=1$ and 
\begin{equation}\label{agree}
\Big[\mean{B} (w\phi)^{\kappa p} \dx \Big]^{1/\kappa}:=\|w\phi\|_{L^\infty(B)}^p;
\end{equation}
note that in this case there is no necessity to apply H\"older's inequality.

\vs

Finally,  once chosen a number $\alpha$ as in \eqref{beta} and, setting $q=\frac{1}{p'\alpha}>\bar q$, with $\bar q$ defined in \eqref{eq:lat q cond}, we fix $\kappa\equiv \kappa(q)\equiv \kappa(p,\alpha)$, in the case $p=n$, as
\begin{equation*}
\kappa=\frac q{q-2}>1;
\end{equation*}
in the rest of the paper we shall implicitly keep $\kappa$ fixed with this value. This, in view of the fact that the lower bound  for $q$ satisfies the (formal when $\kappa = \infty$) relation $\bar q=1+\kappa/(\kappa-1)$, ensures that 
\begin{equation}\label{kappaq}
 \Big(1-\frac1q\Big)\Big(2-\frac1\kappa\Big)>1.
\end{equation}

\subsection{Notation}\label{Notation}
Our notation will be mostly self-explanatory; we mention here some noticeable facts. We shall follow the usual convention of denoting by $c$ a generic constant {\em always greater than or equal to one} that may vary from line to line; constants we shall need to recall will be denoted with special symbols, such as $\tilde c, c_\ast,c_\ell$ or the like. Dependencies of constants will be emphasized between parentheses: $c(n,p,\Lambda)$ will mean that $c$ depends only on $n,p,\Lambda$; often dependencies will be shown right after displays. By saying that a constant depends on the data, we mean that it depends on $n,p,\Lambda,\delta$.

By parabolic boundary of a cylinder $\mathcal K:=C\times \Gamma$, we shall mean $\partial_p\mathcal K:=(\overline C\times\{\inf \Gamma\})\cup (\partial C\times \Gamma)$. Its lateral boundary will be denoted as $\partial_{\rm lat}\mathcal K:=\partial C\times \Gamma$ and its initial boundary $\overline C\times\{\inf \Gamma\}$ will be $\partial_{\rm ini}\mathcal K$. We denote by ${(f)}_A$ the averaged integral
\begin{equation*}
      {(f)}_{A}:=\mean{A} f(\xi)\, d\xi := \frac{1}{|A|} \int_{A} f(\xi)\, d\xi,
\end{equation*}
where $A \in \R^k$ is a measurable set with $0<|A|<\infty$ and $f:A \to \R^m$ an integrable map, with $k,m \ge 1$. Finally we stress that with the statement ``a vector field with the same structure as $\mathcal A$'' (or ``structurally similar to $\mathcal A$'', or expressions alike) we shall mean that the vector field will satisfy \eqref{p.laplacian.assumptions}, eventually with $\Lambda$ replaced by a constant depending only on $n,p,\Lambda$, and continuous with respect to the last two variables. $\N$ is the set $\{1,2,\dots\}$, while $\N_0:=\N\cup\{0\}$.

\section{Reducing the oscillation at the boundary}\label{sec.three}

In this section we shall consider a function $v$ solving 
\begin{equation} \label{the equation for v.bnd}
\begin{cases}
\partial_t v - \mathrm{div} \, \widetilde{\mathcal{A}}(x,t,v,Dv)  = -\partial_t H_{b,\eps}(v)\qquad &\text{in $\Omega_T$,}\\[3mm]
v=\tilde g&\text{on $\partial_p\Omega_T$,}
\end{cases}
\end{equation}
with the Cauchy-Dirichlet datum $\tilde g$ being a uniformly continuous function and $\wt{\mathcal A}$ satisfying \eqref{p.laplacian.assumptions}$_{1,2}$. By regularity theory for evolutionary $p$-Laplace type equations, see \cite{DiBe93, Urba08}, we actually have that the solution $v$ is continuous
up to the boundary since $\sigma\mapsto\sigma+H_{b,\eps}(\sigma)$ is a diffeomorphism for $\eps>0$ fixed. Later on we shall take as $v$ the function $w_\eps$ appearing in \eqref{w}, conveniently rescaled (and this explains the fact that the jump happens at $s=b\neq a$), and as $\tilde g$ the boundary datum $w_0$, also rescaled. 

\vspace{3mm}

As the first result we have the following Caccioppoli's inequality at the boundary.
\begin{lemma}\label{caccioppoli lemma}
Let $v$ be a solution to \eqref{the equation for v.bnd} and let $Q=B \times \Gamma$ be a cylinder such that $Q\cap\partial_p\Omega_T\neq \emptyset$. Then there exists a constant $c$ depending on $p$ and $\Lambda$ such that
\begin{multline}\label{caccioppoli estimate}
 \sup_{\tau\in \Gamma\cap(0,T)}  \frac{1}{|\Gamma\cap(0,T)|} \mean{B\cap\Omega} \Bigl[\int_k^v H_{b,\eps}'(\xi)  (\xi-k)_+ \, d\xi  \, \phi^p\Bigr](\cdot,\tau) \dx\\
  + \sup_{\tau\in \Gamma\cap(0,T)}  \frac{1}{|\Gamma\cap(0,T)|} \mean{B\cap\Omega}\big[(v-k)_+^2 \phi^p\bigr](\cdot,\tau) \dx + \mean{Q\cap\Omega_T} \big| D (v-k)_+ \phi \big|^p \dx\dt\\
 \leq c \, \mean{Q\cap\Omega_T}\Big[ (v-k)_+^p |D\phi|^p + (v-k)_+^2 \left( \partial_t \phi^p\right)_+  \Big]\dx \dt \\
+ c \,   \mean{Q\cap\Omega_T}    \int_k^v H_{b,\eps}'(\xi)  (\xi-k)_+ \, d\xi  \left( \partial_t \phi^p\right)_+ \dx \dt
\end{multline}
for any $k>\sup_{\overline Q\cap\partial_p\Omega_T}\tilde g$ and any test function $\phi\in C^\infty(Q)$ vanishing on $\partial_p Q$.
\end{lemma}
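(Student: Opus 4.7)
The plan is to test equation \eqref{the equation for v.bnd} with $\varphi=(v-k)_+\phi^p$. This is admissible on $(B\cap\Omega)\times[t_1,\tau]$, where $\tau\in\Gamma\cap(0,T)$ is arbitrary and $t_1$ is chosen to be either $\inf\Gamma$ (if $\inf\Gamma>0$, where $\phi(\cdot,t_1)\equiv 0$) or a point approaching $0^+$ (where $v(\cdot,0)=\tilde g<k$, so $(v-k)_+=0$). The assumption $k>\sup_{\overline Q\cap\partial_p\Omega_T}\tilde g$, together with the continuity of $v$ up to the parabolic boundary (granted by the regularity theory for the non-degenerate equation $\partial_t\mathcal H(v)=\mathrm{div}\,\wt{\mathcal A}$, with $\mathcal H(\sigma)=\sigma+H_{b,\eps}(\sigma)$ a diffeomorphism), guarantees that $(v-k)_+$ vanishes in a one-sided neighbourhood of $\overline Q\cap\partial_{\rm lat}\Omega_T$, while $\phi$ vanishes on $\partial B\times\Gamma$; together they render $\varphi$ zero on the whole lateral boundary of $(B\cap\Omega)\times[t_1,\tau]$.

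To make the test rigorous, I would work with Steklov averages $[\,\cdot\,]_h$, multiply the regularised equation by $([v]_h-k)_+\phi^p$ and pass to the limit $h\to 0$. The time derivative contributions are dealt with by introducing the primitive
\[
\Psi(s):=\int_k^s H'_{b,\eps}(\xi)(\xi-k)_+\,d\xi\geq 0,
\]
so that formally $\partial_t v\cdot(v-k)_+=\partial_t\bigl[(v-k)_+^2/2\bigr]$ and $H'_{b,\eps}(v)\,\partial_t v\,(v-k)_+=\partial_t\Psi(v)$. Integration by parts in time, combined with the vanishing of the integrand at $t=t_1$, yields the identity
\[
\int\bigl[\tfrac12(v-k)_+^2+\Psi(v)\bigr]\phi^p(\cdot,\tau)\,dx+\iint\langle\wt{\mathcal A}(\cdot,v,Dv),D\varphi\rangle\,dx\,dt=\iint\bigl[\tfrac12(v-k)_+^2+\Psi(v)\bigr]\partial_t\phi^p\,dx\,dt,
\]
with integration over $(B\cap\Omega)\times[t_1,\tau]$. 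Since the bracketed quantity is non-negative, the right-hand side can be dominated by the same expression tested against $(\partial_t\phi^p)_+$ over the whole of $Q\cap\Omega_T$, which is exactly the form required in \eqref{caccioppoli estimate}.

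The diffusion term is handled in the customary way: writing $D\varphi=D(v-k)_+\,\phi^p+p(v-k)_+\phi^{p-1}D\phi$, the coercivity in \eqref{p.laplacian.assumptions} furnishes $\Lambda^{-1}|D(v-k)_+|^p\phi^p$ on the left, while the growth bound together with Young's inequality gives
\[
p|\wt{\mathcal A}||D\phi|(v-k)_+\phi^{p-1}\leq \tfrac{1}{2\Lambda}|D(v-k)_+|^p\phi^p+c(p,\Lambda)(v-k)_+^p|D\phi|^p,
\]
the first summand being absorbed on the left. Taking the supremum over $\tau\in\Gamma\cap(0,T)$ of the resulting inequality and dividing by the normalising volumes $|B\cap\Omega|$ and $|\Gamma\cap(0,T)|$ produces the averaged form stated in \eqref{caccioppoli estimate}.

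The main obstacle is the careful justification of the test function at $\overline Q\cap\partial_p\Omega_T$: one has to invoke the continuity of $v$ up to the boundary, coming from the non-degenerate nature of the regularised problem, to conclude that $(v-k)_+\equiv 0$ on $\overline Q\cap\partial_p\Omega_T$ so that the extension by zero is compatible with the weak formulation. Once this — and the Steklov-averaging scheme — is in place, the computation proceeds essentially as in the interior case, the Heaviside contribution appearing as an additional non-negative term that is kept on both sides of the inequality.
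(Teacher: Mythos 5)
Your proposal is correct and follows essentially the same route as the paper: the authors also test the weak formulation with $\varphi=(v-k)_+\phi^p$, observing that the choice $k>\sup_{\overline Q\cap\partial_p\Omega_T}\tilde g$ together with the boundary continuity of $v$ (available for the regularized, non-degenerate problem) makes this admissible, and then refer to \cite[Lemma 2.1]{BKU} and \cite{DiB2} for the standard Steklov-average and Young's-inequality computations that you have written out explicitly.
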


\begin{proof}
In order to get \eqref{caccioppoli estimate}, we test the local weak formulation of \eqref{the equation for v.bnd} with $\varphi=(v-k)_+ \phi^p$; notice that $\varphi$ has a compact support in $\Omega_T$, since $v$ is continuous up to the boundary as it solves the regularized equation. The calculations are now standard and we refer to \cite[Lemma 2.1]{BKU} or \cite{DiB2}. 
\end{proof}
\begin{remark}\label{rem.sobolev}
Note that it makes sense to apply the Sobolev's inequality of \eqref{Sobolev} to functions of the form $\varphi:=(v-k)_+\phi$, $\phi\in C^\infty_c(B), k\in\R$ large as in Lemma \ref{caccioppoli lemma}, on balls centered on the lateral boundary of $\Omega_T$, just setting $\varphi\equiv 0$ outside $\Omega_T$. In view of \eqref{eq:lateral density}, taking averages in \eqref{Sobolev} with respect to $B$ is equivalent to taking them with respect to $B\cap\Omega$, so there will not be any possible misunderstanding. Another occurrence when we shall apply Sobolev's inequality \eqref{Sobolev} is when $\phi(\cdot,\tau)=0$ in $\R^n\setminus\Omega$, for almost every $\tau\in\Gamma$; in this case, we have
\[
|B\cap\{\phi(\cdot,\tau)=0\}|\geq \delta|B|\qquad\text{for a.e. $\tau$}
\]
by our density assumption \eqref{eq:lateral density}, and again a classic Sobolev-type inequality, see \cite[Theorem 1, p. 189]{EvansGariepy}, leads to \eqref{Sobolev}, with the constant also depending on $\delta$. Also a Poincar\'e's inequality is available in this case (see for instance \eqref{Po.boundary}). 
\vs

\end{remark}

\subsection{Reducing the oscillation at the lateral boundary}\label{red.lateral}
 Assume now that $(x_0,t_0)\in \partial_{\rm lat}\Omega_T$ and recall that $\Omega$ satisfies the outer density condition \eqref{eq:lateral density} with parameters $\delta \in (0,1)$ and $r_\Omega>0$. 
Let $\omega\in(0,1]$ and define the following auxiliary number, for $\eps_1\in(0,1)$ to be fixed later:
\begin{equation}\label{tilde.omega}
\widetilde{\omega} = \eps_1\omega \exp\left( - [\eps_1 \omega]^{- p' q}\right) <\frac12\eps_1\omega<\omega.
\end{equation}
We shall need to work with the two time scales $T^1:=\left[ \eps_1 \omega \right]^{2-p} r^p$, $T^3:=\widetilde\omega^{1-p}r^p$
in order to handle the degeneracy given by the jump. Moreover we shall also need the scale $T^2:= [\eps_2\widetilde\omega]^{2-p} r^p$, $\eps_2\in(0,1)$, when away from the jump, i.e., when dealing with the degeneracy given only by the $p$-Laplacian operator, see Paragraph \ref{uf}. We shall moreover always consider $\eps_1\leq \eps_2^{p-2}$, see \eqref{eps1}; in view of this, \eqref{tilde.omega}, and the trivial fact that $\widetilde\omega\leq \eps_1$, we have
\begin{equation}\label{time.partialorder}
 T^1=\left[ \eps_1 \omega \right]^{2-p} r^p\leq \widetilde\omega ^{2-p} r^p\leq T^2=\eps_2^{2-p}\widetilde\omega^{1+(1-p)} r^p\leq \widetilde\omega^{1-p}r^p=T^3\,. 
\end{equation}
We also define for $\sigma > 0$ the cylinders 
\[
\sigma Q^i:=\big(B_{\sigma r}(x_0) \times\left(t_0-\sigma T^i,t_0\right) \big) \cap \Omega_T, \qquad i=1,2,3.
\]
Note that clearly $Q^1\subset Q^2\subset Q^3$. 

\vs

From now on we shall write
\[
\mu^+ := \sup_{Q^3} v, \qquad \mu^- := \inf_{Q^3} v.
\]
We further assume that
\begin{equation}\label{tri.assumption}
b \in [\mu^-,\mu^+]
\end{equation}
and
\begin{equation} \label{eq:lat init 1}
\sup_{\overline Q^3\cap \partial_p \Omega_T } \tilde g \leq \mu^+-\frac{\omega}{8}\,,\qquad\qquad \eps \leq \frac{\widetilde{\omega}}{2}\,.
\end{equation}
We consider two cases: either the jump is close to the supremum of $v$
\begin{equation} \label{eq:b2 lat}
b \leq\mu^+ - 2\widetilde{\omega},\tag{Alt.\,1}
\end{equation}
or this does not hold:
\begin{equation} \label{eq:b1 lat}
b >\mu^+- 2\widetilde{\omega}\,.\tag{Alt.\,2}
\end{equation}
In the case of~\eqref{eq:b1 lat}, we consider the further two alternatives: either 
\begin{equation}\label{lat alt.1}
\sup_{\max\{0,t_0-\frac14 T^1\}<t<t_0} \mean{B_{r/4}\cap\Omega} \int_{\mu^+- 3{\widetilde\omega}}^{v(\cdot,t)}H_{b,\eps}'(\xi)  \, d\xi \dx  > \eps_3^{-1}\left[ \eps_1 \omega \right]^{q}\tag{Alt.\,2.1}
\end{equation}
is in force or the converse inequality
\begin{equation}\label{lat alt.2}
\sup_{\max\{0,t_0-\frac14 T^1\}<t<t_0} \mean{B_{r/4}\cap\Omega}\int_{\mu^+- 3{\widetilde\omega}}^{v(\cdot,t)} H_{b,\eps}'(\xi)  \, d\xi \dx  \leq  \eps_3^{-1}\left[ \eps_1 \omega \right]^{q} \tag{Alt.\,2.2}
\end{equation}
holds, where $q$ satisfies~\eqref{eq:lat q cond} and $\eps_3 \in (0,1)$ will be chosen later. Note that it would be equivalent (see  \eqref{eq:b2 lat} and \eqref{eq:lat init 1}$_2$ and consider also \eqref{support.H}) to put as the lower bound in the integral of $H_{b,\eps}'$ the point $b-\eps$; we keep this choice also to meet the formal explanation in Paragraph \ref{strategy}.
\subsubsection{Strategy of the proof revisited}
There are three free parameters $\eps_1, \eps_2,\eps_3$ appearing above. The strategy for choosing them is to first fix $\eps_2$ in the case~\eqref{eq:b2 lat}; this choice is independent of $\eps_1$ and $\eps_3$. We subsequently fix  $\eps_3$ in the analysis of~\eqref{eq:b1 lat} and~\eqref{lat alt.1}, see \eqref{eps3}, independently of $\eps_1$ and $\eps_2$, and finally, $\eps_1$ is chosen to depend on the data and $\eps_2,\eps_3$ while analyzing the case~\eqref{eq:b1 lat} and~\eqref{lat alt.2} (see \eqref{eps1}).  

\vs

\begin{lemma} \label{lemma:lat dens}
Suppose that $v$ is a weak solution to~\eqref{the equation for v.bnd} satisfying~\eqref{tri.assumption}, \eqref{eq:lat init 1} and suppose that $\eps_1,\eps_2$ are small enough $(\eps_1,\eps_2\leq 2^{-10})$. Then there is a constant $c_\ell \equiv c_\ell(n,p,\Lambda,\delta)\geq1$ such that the following holds:
\begin{itemize}
\item if $v$ satisfies the first alternative \eqref{eq:b2 lat}, then
\begin{equation} \label{eq:lat dens 3}
\frac{|\frac{1}{8}Q^2 \cap \{ v > \mu^+  - 2\eps_2\widetilde\omega\}|}{|\frac{1}{8}Q^2|} \leq  \frac{c_\ell}{[ \log(1/ \eps_2)]^{1/p'}};
\end{equation}
\item if $v$ satisfies the second alternative \eqref{eq:b1 lat}, then
\begin{equation} \label{eq:lat dens 2}
\frac{|\frac{1}{2}Q^3 \cap \{ v > \mu^+  - 8\widetilde{\omega}\}|}{|\frac{1}{2}Q^3|} \leq c_\ell [\eps_1 \omega]^q;
\end{equation}
\item if $v$ satisfies the second alternative \eqref{eq:b1 lat} and also \eqref{lat alt.2}, then
\begin{equation} \label{eq:lat dens 1}
\frac{|\frac{1}{8}Q^1 \cap \{ v > \mu^+  - 2\eps_1 \omega\}|}{|\frac{1}{8}Q^1|} \leq  \frac{c_\ell \eps_3^{-1/p}}{[ \log (1/\eps_1)]^{1/p'}}.
\end{equation}
\end{itemize}
\end{lemma}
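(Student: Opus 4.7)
My plan is to prove all three density estimates following the DiBenedetto strategy: apply the Caccioppoli inequality \eqref{caccioppoli estimate} — in cases \eqref{eq:b2 lat} and \eqref{lat alt.2} composed with a suitable logarithmic function, as in the log-lemma referenced in Paragraph \ref{strategy} — carefully choose the truncation level so that the Heaviside term either vanishes outright or can be absorbed on the right, and exploit the boundary constraint $\tilde g \leq \mu^+ - \omega/8$ on $\partial_p\Omega_T$. The lateral density condition \eqref{eq:lateral density}, together with Remark \ref{rem.sobolev}, lets me extend the test function by zero across $\partial_{\rm lat}\Omega_T$ and thereby retain the Sobolev--Poincar\'e inequality; the boundary gap $\omega/8$ supplies the quantitative starting point of every level-set argument.

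\textbf{Cases \eqref{eq:b2 lat} and \eqref{lat alt.2}: above the jump, logarithmic argument.} In both subcases I choose the truncation level $k$ to sit strictly above $b+\eps$, namely $k:=\mu^+-2\eps_2\widetilde\omega$ in \eqref{eq:b2 lat} and $k:=\mu^+-2\eps_1\omega$ in \eqref{lat alt.2}. The hypotheses \eqref{eq:b2 lat}, \eqref{eq:b1 lat}, and $\eps\leq\widetilde\omega/2$ from \eqref{eq:lat init 1} together force $k>b+\eps$, so $H_{b,\eps}'(v)\equiv 0$ on $\{v>k\}$ and the equation for $(v-k)_+$ is purely of $p$-Laplacian type. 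I then apply a parabolic log-Caccioppoli estimate built on
\[
 \Psi(v):=\bigg[\log\bigg(\frac{H}{H-(v-k)_++\theta H}\bigg)\bigg]_+\!,
\]
with $H\simeq\omega$ and $\theta$ comparable to $\eps_2$ (respectively $\eps_1$). Since $\tilde g\leq \mu^+-\omega/8<k$ on $\partial_p\Omega_T\cap \overline{Q^i}$, the composition $\Psi(v)$ vanishes on the lateral boundary and a test cutoff supported in $B_{r/4}$, zero-extended outside $\Omega$, is admissible. The resulting estimate furnishes a uniform-in-time bound of the form $\sup_\tau\mean{B\cap\Omega}\Psi^2\phi^p\dx\leq C$ together with an integrated gradient piece; since $\Psi\gtrsim [\log(1/\eps_2)]^{1/p'}$ (resp.\ $[\log(1/\eps_1)]^{1/p'}$) on the set one wants to control, and since it is precisely the gradient piece that produces the exponent $1/p'$ rather than $1/p$, Chebyshev then yields \eqref{eq:lat dens 3} and \eqref{eq:lat dens 1}. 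The additional factor $\eps_3^{-1/p}$ in \eqref{eq:lat dens 1} enters as the cost of absorbing the sup-in-time Heaviside integral on the right-hand side of \eqref{caccioppoli estimate} when $\partial_t\phi^p\not\equiv0$ — that quantity is exactly what \eqref{lat alt.2} keeps under control.

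\textbf{Case \eqref{eq:b1 lat} alone: polynomial estimate near the jump.} Now the jump lies in the small range $[\mu^+-2\widetilde\omega,\mu^+]$, the Heaviside term is genuinely active, and a logarithmic argument is unavailable. I pick $k:=\mu^+-8\widetilde\omega$ and apply \eqref{caccioppoli estimate} directly on $\frac12 Q^3$ with a standard cutoff $\phi$ of $\frac12 Q^3$ inside $Q^3$, zero-extended across $\partial_{\rm lat}\Omega_T$. On the support one has $(v-k)_+\leq 8\widetilde\omega$, and by \eqref{support.H} also $\int_k^v H_{b,\eps}'(\xi)(\xi-k)_+\,d\xi\leq 8\widetilde\omega$. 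With $|D\phi|\lesssim 1/r$ and $(\partial_t\phi^p)_+\lesssim 1/T^3=\widetilde\omega^{p-1}/r^p$, the right-hand side of \eqref{caccioppoli estimate} is of order $\widetilde\omega^p/r^p$. The parabolic Sobolev inequality of Paragraph \ref{Sobo.paragraph}, applicable thanks to \eqref{eq:lateral density} and Remark \ref{rem.sobolev}, then upgrades this gradient/sup bound into a measure estimate for $\{v>\mu^+-4\widetilde\omega\}$ once the level $\widetilde\omega$ is compared to the intrinsic parabolic scale $\eps_1\omega$ used in $T^1$. The explicit choice \eqref{tilde.omega}, $\widetilde\omega=\eps_1\omega\exp(-[\eps_1\omega]^{-p'q})$, is calibrated precisely so that the resulting ratio $(\widetilde\omega/(\eps_1\omega))^{\text{Sobolev exponent}}$ drops below $[\eps_1\omega]^q$, delivering \eqref{eq:lat dens 2}.

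\textbf{Main obstacle.} The technical core is the balancing of three incommensurate scales: the oscillation $\omega$, its Heaviside shrink $\widetilde\omega$, and the level-set thickness ($\eps_j\widetilde\omega$ or $\eps_1\omega$). The particular form \eqref{tilde.omega} is dictated by the requirement that the polynomial bound \eqref{eq:lat dens 2} and the logarithmic bounds \eqref{eq:lat dens 3}, \eqref{eq:lat dens 1} coexist under the single time-scale hierarchy $T^1\leq T^2\leq T^3$ recorded in \eqref{time.partialorder}, so that a subsequent De Giorgi iteration can be run at a common level. A secondary, and equally delicate, point is how to absorb the Heaviside right-hand-side contribution in \eqref{caccioppoli estimate} without spoiling the logarithmic gain in case \eqref{lat alt.2}: it is precisely this absorption that forces the introduction of the parameter $\eps_3$ and its eventual sharp selection, as anticipated in Paragraph \ref{strategy}.
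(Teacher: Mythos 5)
Your proposal misses the mechanism that actually produces all three smallness factors in the paper. The paper's proof of every case is a De Giorgi--type summation over \emph{dyadic levels}: one sets $k_j=\mu^+-2^{-j}\omega$ (resp.\ $\mu^+-2^{-j}\widetilde\omega$ for \eqref{eq:lat dens 3}), uses the Poincar\'e inequality coming from the outer density condition \eqref{eq:lateral density} (the truncations $\widehat w_j$ vanish near $\partial\Omega$ because $k_j\geq\sup\tilde g$), combines it with the boundary Caccioppoli inequality, raises the resulting band inequality $|\{v\geq k_{j+1}\}|\leq c\,|Q|^{1/p}|\{k_j<v<k_{j+1}\}|^{1/p'}$ to the power $p'$ and sums over $j=3,\dots,\bar\jmath$. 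The exponent $1/p'$ in \eqref{eq:lat dens 3} and \eqref{eq:lat dens 1} is exactly $(\bar\jmath)^{-1/p'}$ with $\bar\jmath\sim\log(1/\eps_2)$, resp.\ $\log(1/\eps_1)$; and the polynomial factor in \eqref{eq:lat dens 2} is $(\bar\jmath)^{-1/p'}$ with $\bar\jmath\sim[\eps_1\omega]^{-p'q}$ levels, which is where the exponential smallness of $\widetilde\omega$ in \eqref{tilde.omega} enters. Your replacement arguments do not deliver these factors. For \eqref{eq:lat dens 2}, a single application of Caccioppoli plus Sobolev at the level $\mu^+-8\widetilde\omega$ on the intrinsically scaled cylinder $\tfrac12Q^3$ gives nothing: there $(v-k)_+\leq 8\widetilde\omega$ and $T^3=\widetilde\omega^{1-p}r^p$, so the scaling balances exactly and the resulting density bound is $O(1)$ (in fact the homogeneity works out to a nonpositive power of $\widetilde\omega$); the quantity $\eps_1\omega$ never appears in that computation, so the claimed ratio $(\widetilde\omega/(\eps_1\omega))^{\text{Sobolev exponent}}$ does not arise. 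For \eqref{eq:lat dens 3} and \eqref{eq:lat dens 1}, the logarithmic test-function device is not justified at the lateral boundary: in the backward cylinders $(t_0-T^i,t_0)$ there is no information at the initial time slice (unlike Lemma \ref{lemma: log lemma}, which lives at the initial boundary), so either the cutoff must vanish there, in which case the term $\int\varPsi^2(\partial_t\phi^p)_+\sim\log^2(1/\theta)/T^i$ destroys the estimate, or the argument does not close; moreover the standard log lemma yields the power $1$ of the logarithm, and you give no computation producing the power $1/p'$.

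There is also a concrete error in your treatment of the case \eqref{eq:b1 lat}--\eqref{lat alt.2}: you claim the level $k=\mu^+-2\eps_1\omega$ sits above $b+\eps$ so that $H_{b,\eps}'(v)\equiv0$ on $\{v>k\}$. The opposite holds: under \eqref{eq:b1 lat} one has $b>\mu^+-2\widetilde\omega$ with $\widetilde\omega<\tfrac12\eps_1\omega$, hence $b+\eps>\mu^+-\tfrac52\widetilde\omega>\mu^+-2\eps_1\omega=k$, so the jump lies \emph{inside} the range of $(v-k)_+$ and the Heaviside term is genuinely active. This is precisely why the paper estimates it through the hypothesis \eqref{lat alt.2} (see \eqref{term.H.estimate}), producing the factor $\eps_3^{-1/p}$; your text is internally inconsistent here, since you simultaneously assert that the singular term vanishes and that $\eps_3^{-1/p}$ arises from absorbing it. The only case where the jump truly lies below the levels is \eqref{eq:b2 lat}, where indeed $b+\eps\leq\mu^+-\tfrac32\widetilde\omega<k_j$ and the proof reduces to the $p$-Laplacian situation, as the paper states.
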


\begin{proof}
Let us first prove~\eqref{eq:lat dens 1}. We define
\begin{equation}\label{levsfunctions}
k_j := \mu^+ - 2^{-j}  \omega , \qquad w_j:= (v-k_j )_+, \qquad \widehat w_j := \min\{w_j,k_{j+1}- k_j\}, 
\end{equation}
for all $3\leq j\leq \bar \jmath$, where $\bar \jmath$ is the integer satisfying
\begin{equation}\label{epsj}
2^{-(\bar\jmath+2)}<2\eps_1\leq 2^{-(\bar\jmath+1)}. 
\end{equation}
By~\eqref{eq:lat init 1}$_1$ we have that for all $j\geq3$
\begin{equation}\label{bound.kj}
k_j  \geq \mu^+ - \frac{\omega}{8}  \geq\sup_{Q^3\cap \partial_p \Omega_T } \tilde g \geq\sup_{Q^1\cap \partial_p \Omega_T} \tilde g;
\end{equation}
therefore, $w_j(\cdot,t)$ vanishes in a neighborhood of $\partial \Omega $ for every $t\in(t_0-T^1,t_0)$. Thus we may extend it to be zero outside of $\Omega$ in such a way that 
$$\hat w_j \in L^p(t_0-T^1,t_0;W^{1,p}(B_r(x_0))).$$
 The density condition~\eqref{eq:lateral density} readily implies that
\begin{equation}\label{density}
| B_{r/8}(x_0) \cap \{ \widehat w_j (\cdot,t) = 0 \} | \geq \delta |B_{r/8}(x_0)|
\end{equation}
for all $t\in (t_0-T^1,t_0)$. Using this condition we have by the standard application of the Poincar\'e's inequality that
\begin{equation} \label{eq:Sobobound} 
\int_{B_{r/8}} \widehat w_j(\cdot,t) \dx \leq c(n,\delta) \,r \int_{B_{r/8}} |D\widehat w_j(\cdot,t)|\dx
\end{equation}
for every $t\in (t_0-T^1,t_0)$. Now we integrate the previous inequality over $(t_0-\frac18T^1,t_0)$ and then estimate from below the left-hand side in the following way:
\begin{equation}\label{es.beloww}
\int_{\frac18Q^1} \widehat w_j \dx\dt  \geq  (k_{j+1}- k_{j}) \big|{\textstyle\frac18}Q^1 \cap \{v \geq k_{j+1}\}\big| =2^{-(j+1)} \omega \big|{\textstyle\frac18}Q^1 \cap \{v \geq k_{j+1}\}\big|. 
\end{equation}
By H\"older's inequality we bound from above
\begin{equation}\label{es.abbove}
 \int_{\frac18Q^1}   |D\widehat w_j| \dx \dt  \leq  \biggl[\int_{\frac18Q^1} |Dw_j|^p \dx \dt \biggr]^{1/p} \big|{\textstyle\frac18}Q^1 \cap \{ k_{j} < v < k_{j+1}\}\big|^{1/p'}\,. 
\end{equation}
Combining the above displays leads to
\begin{multline}  \label{lat basic 0}
\big|{\textstyle\frac18}Q^1  \cap \{v \geq k_{j+1}\}\big|  
\leq  c(n,\delta) |{\textstyle\frac18}Q^1|^{1/p} \biggl[r^{p} \big[2^{-j}  \omega\big]^{-p}\mean{\frac18Q^1} |D(v-k_j)_+|^p \dx \dt \biggr]^{1/p} \times\\   
\times \big|{\textstyle\frac18}Q^1 \cap \{ k_{j} < v < k_{j+1}\}\big|^{1/p'}.
\end{multline}
At this point we want to use the boundary Caccioppoli's inequality, Lemma~\ref{caccioppoli lemma}, with $Q=\frac14Q^1$, $k=k_j$, and $\phi\in C^\infty(\frac14Q^1)$ a standard cutoff function vanishing on the parabolic boundary with $0\leq\phi\leq 1, \phi\equiv 1$ on $\frac18Q^1$, and 
\[
|\partial_t\phi^p|\leq \frac{c(p)}{T^1},\quad |D\phi|\leq \frac{c}{r}.
\]
 Observing that by \eqref{epsj} we have for any $j\leq\bar\jmath$
\begin{equation*}
T^1=\left[ \eps_1 \omega \right]^{2-p} r^p\geq \big[ 2^{-(j+2)} \omega \big]^{2-p} r^p,
\end{equation*}
and after some simple algebraic manipulations we obtain 
\begin{multline}  \label{cacc lat 1}
 \mean{\frac18Q^1}   |D(v-k_j)_+|^p  \dx\dt \leq \frac{c}{r^p} \biggl[\mean{\frac14Q^1}\Bigl( {(v-k_j)_+^p} +   {(v-k_j)_+^2}[ 2^{-j} \omega ]^{p-2} \Bigr) \dx \dt\\
+  [ 2^{-j} \omega ]^{p-2}   \mean{\frac14Q^1} \int_{k_j}^v H_{b,\eps}'(\xi)  (\xi-k_j)_+ \, d\xi \dx\dt\biggr].
\end{multline}
Now we have to use \eqref{lat alt.2}: we can estimate using $(v-k_j)_+\leq 2^{-j}\omega$ and the facts that $b-\eps>\mu^+-3\wt\omega$ and $H'_{b,\eps}(\xi)=0$ whenever $\xi<b-\eps$
\[
\int_{k_j}^v H_{b,\eps}'(\xi)  (\xi-k_j)_+ \, d\xi\leq 2^{-j}\omega\,\int_{\mu^+-3\widetilde\omega}^v H_{b,\eps}'(\xi)  \, d\xi\,.
\]
Then, by \eqref{lat alt.2} we infer
\begin{align}\label{term.H.estimate}
[ 2^{-j} \omega ]^{p-2}  \mean{\frac14Q^1} \int_{k_j}^v &H_{b,\eps}'(\xi)  (\xi-k_j)_+ \, d\xi \dx\dt\notag\\
&\leq \big[ 2^{-j} \omega \big]^{p-1}   \sup_{\max\{0,t_0-\frac14 T^1\}<t<t_0}\mean{B_{r/4}\cap\Omega} \int_{\mu^+-3\widetilde\omega}^{v(\cdot,t)}  H_{b,\eps}'(\xi)  \, d\xi\,\dx \notag\\
&\leq \eps_3^{-1} \big[ 2^{-j} \omega \big]^{p} 
\end{align}
since $\eps_1\leq 2^{-j}$ by \eqref{epsj} and $q>1$. It follows by combining  \eqref{lat basic 0}, \eqref{cacc lat 1} and \eqref{term.H.estimate} that
\[
|{\textstyle\frac18}Q^1 \cap \{v \geq k_{j+1}\}|   \leq c\, \eps_3^{-1/p} |{\textstyle\frac18}Q^1|^{1/p}   |{\textstyle\frac18}Q^1 \cap \{ k_{j} < v < k_{j+1}\}|^{1/p'}.
\]
Taking the power $p'$ from both sides and then summing up for $j=3,\dots,\bar\jmath$ gives
\begin{align*}
(\bar\jmath-2)|{\textstyle\frac18}Q^1 \cap \{v \geq k_{\bar\jmath+1}\}|^{p'}  &\leq c\,\eps_3^{-p'/p} |{\textstyle\frac18}Q^1|^{p'/p} \sum_{j=3}^{\bar\jmath}|{\textstyle\frac18}Q^1 \cap \{ k_{j} < v < k_{j+1}\}|\\
&\leq c\,\eps_3^{-p'/p} |{\textstyle\frac18}Q^1|^{1/(p-1)+1} =c \,\eps_3^{-p'/p} \,|{\textstyle\frac18}Q^1|^{p'} 
\end{align*}
and hence, finally,
\begin{equation*}
\frac{|{\textstyle\frac18}Q^1 \cap \{v \geq \mu^+  - 2^{-(\bar\jmath+1)}\omega\}|}{|{\textstyle\frac18}Q^1|}  \leq \frac{c\, \eps_3^{-1/p}}{(\bar\jmath-2)^{1/p'}}
\end{equation*}
with $c$ depending on $n,p,\Lambda,\delta$. The result now follows easily, since $-5\geq(\log_2\eps_1)/2$ implies 
\[
\bar\jmath-2\geq -\log_2 \eps_1-5\geq -c\,\log \eps_1.
\]

\vs

We come to the proof of \eqref{eq:lat dens 2}. The levels $k_j$ and the functions $w_j,\hat w_j$ are defined exactly as in \eqref{levsfunctions} for $3\leq j\leq \bar\jmath$, but this time with $\bar \jmath$ being the integer satisfying
\[
2^{-(\bar\jmath+2)}\omega<8\eps_1 \exp\big( - [\eps_1 \omega]^{- p'q}\big)\omega=8\widetilde\omega\leq 2^{-(\bar\jmath+1)}\omega;
\]
again this yields $\widetilde\omega\leq 2^{-j}\omega$  for all $j\leq \bar\jmath$. Now we can proceed similarly as above, since \eqref{bound.kj} still clearly holds. Extending again $\hat w_j$ to zero outside $\Omega$ in such a way that $\hat w_j \in L^p(t_0-T^3,t_0;W^{1,p}(B_r(x_0)))$, we have \eqref{density} over $B_{r/2}$ for all $t\in (t_0-T^3,t_0)$ and hence \eqref{eq:Sobobound} in $(t_0-T^3,t_0)$. Integrating and again estimating from below the left-hand side as in \eqref{es.beloww} and the right-hand side as in \eqref{es.abbove} yields
\[
2^{-j} \omega \big|{\textstyle\frac12}Q^3 \cap \{v \geq k_{j+1}\}\big|   \leq  c\biggl[r^p\int_{\frac12Q^3} |Dw_j|^p \dx \dt \biggr]^{1/p} \big|{\textstyle\frac12}Q^3 \cap \{ k_{j} < v < k_{j+1}\}\big|^{1/p'}
\]
with $c\equiv c(n,\delta)$. Now, by the choice of $\bar\jmath$, we have for any $j\leq\bar\jmath$ that $T^3= \widetilde\omega^{1-p}r^p \geq  \big[2^{-j}\omega\big]^{1-p}r^p$. Thus the boundary Caccioppoli's inequality in this case takes the form
\begin{align*}
 \mean{\frac12Q^3}   |D(v-k_j)_+|^p  \dx\dt &\leq \frac{c}{r^p}\biggl[\mean{Q^3}\Bigl( {(v-k_j)_+^p} +  (v-k_j)_+^2[2^{-j}\omega]^{p-1} \Bigr) \dx \dt\notag\\
&\hspace{1.5cm}+  [2^{-j}\omega]^{p-1}  \mean{Q^3} \int_{k_j}^v  H_{b,\eps}'(\xi)  (\xi-k_j)_+ \, d\xi \dx\dt\biggr].\notag
\end{align*}
Now, recalling that $(v-k_j)_+\leq 2^{-j}\omega$, we simply estimate by \eqref{support.H}$_2$
\[
\int_{k_j}^v H_{b,\eps}'(\xi)  (\xi-k_j)_+ \, d\xi\leq (v-k_j)_+\int_{\R} H_{b,\eps}'(\xi)   \, d\xi\leq 2^{-j}\omega
\]
and this leads to
\[
 \mean{\frac12Q^3}   |D(v-k_j)_+|^p  \dx\dt  \leq \frac{c}{r^p}\big[2^{-j}\omega\big]^p.
\]
This is to say, the choice of the time scale $T^3$ is sufficient to rebalance the inequality. We obtain  again
\[
|{\textstyle\frac12}Q^3 \cap \{v \geq k_{j+1}\}|   \leq c\, |{\textstyle\frac12}Q^3|^{1/p}   |{\textstyle\frac12}Q^3 \cap \{ k_{j} < v < k_{j+1}\}|^{1/p'}.
\]
and as above, after summing up for $j=3,\dots,\bar\jmath$ gives
\[
(\bar\jmath-2)^{1/p'}|{\textstyle\frac12}Q^3 \cap \{v \geq \mu^+-8\widetilde\omega\}| \leq(\bar\jmath-2)^{1/p'}|{\textstyle\frac12}Q^3 \cap \{v \geq k_{\bar\jmath+1}\}| \leq c \,|{\textstyle\frac12}Q^3|.
\]
We again conclude by estimating
\[
2^{-(\bar\jmath+2)}\leq 8\eps_1\exp\big( - [\eps_1 \omega]^{- p'q}\big)\leq 2^{-7-[\eps_1 \omega]^{- p'q}}
\]
since $\eps_1\leq2^{-10}$ and thus
\[
\bar\jmath-2 \geq \,[\eps_1 \omega]^{- p' q}.
\]

\vs

We are left with~\eqref{eq:lat dens 3}.  Defining now
\[
k_j := \mu^+ - 2^{-j}  \widetilde\omega, 
\]
$j\leq\bar\jmath$, where $2^{-(\bar\jmath+2)}<2\eps_2\leq 2^{-(\bar\jmath+1)}$, we notice that the proof, which on the other hand follows closely that of~\eqref{eq:lat dens 1}, reduces to the proof for the standard evolutionary $p$-Laplacian, because the phase transition lies outside of the image of $w_j$: indeed $b+\eps\leq \mu^+-2\wt\omega+\frac{\wt\omega}{2}<k_j$ for $j\in \N_0$, as a consequence of~\eqref{eq:b2 lat} and~\eqref{eq:lat init 1}$_2$. Hence the singular term drops from the Caccioppoli's inequality and the time scale $T^2$ rebalances it as in the usual case: for details see, for example,~\cite{DiBe93,DiBeUrbaVesp04,Urba08} and the forthcoming \eqref{Cac.standard.u}.
\end{proof}

\subsubsection{The geometric setting}
Due to the three different cases we consider (and subsequently, with the three different time scales needed), we shall need to work with three families of shrinking cylinders and related cutoff functions.

\vspace{3mm}

Set, for $j\in\N_0$,
\begin{equation*}
\sigma_j := \frac{1}{16}\big(1+2^{-j}\big) ,\qquad \wt\sigma_j := \frac{1}{4}\big(1+2^{-j}\big),
\end{equation*}
and
\[
Q_j^i := \sigma_j Q^i=\big(B_j\times(t_0-T_j^i,t_0)\big)\cap\Omega_T, \quad i=1,2,\quad Q_j^3 := \wt \sigma_j Q^3=\big(\wt B_j\times(t_0-T_j^3,t_0)\big)\cap\Omega_T,
\]
where
\[
B_j:=B_{\sigma_jr}(x_0),\quad T_j^i:=\sigma_jT^i,\quad i=1,2,\quad \wt B_j:=B_{\wt\sigma_jr}(x_0),\quad T_j^3:=\wt\sigma_jT^3.
\]
Note that
\[
\frac18Q^1=Q_0^1\supset Q_j^1\stackrel{j\to\infty}{\longrightarrow} \frac1{16}Q^1,\qquad\frac18Q^2=Q_0^2\supset Q_j^2\stackrel{j\to\infty}{\longrightarrow} \frac1{16}Q^2,
\]
and
\[
\frac12Q^3=Q_0^3\supset Q_j^3\stackrel{j\to\infty}{\longrightarrow} \frac14Q^3.
\]
We will take, for $i=1,2,3$ and $j\in \N_0$, standard smooth cut-off functions $\phi_{i,j}$ such that $\phi_{i,j}$ vanishes on the parabolic boundary of $Q_j^i$; moreover we assume $0\leq \phi_{i,j} \leq 1$ and $\phi_{i,j}\equiv1$ on $Q_{j+1}^i$. Note that we may also require 
\[
|\partial_t\phi_{i,j}^p|\leq c(p)\frac{2^j}{T^i},\qquad |D\phi_{i,j}|\leq c\,\frac{2^j}{r}.
\]

\subsubsection{Occurrence of~\eqref{eq:b2 lat}} \label{uf}
Here we state that using~\eqref{eq:lat dens 3} it is possible to show that
\begin{equation}\label{bound.u.thirdcase}
\sup_{\frac1{16}Q^2} v \leq \mu^+ - \eps_2 \widetilde\omega \,, 
\end{equation}
provided we choose $\eps_2\equiv \eps_2(n,p,\Lambda,\delta)$ small enough. Indeed, the proof for the above fact reduces (more or less) to the analysis of the standard evolutionary $p$-Laplacian operator, because the phase transition lies outside of the support of the test functions; essentially, we follow the proof of \cite[Lemma 9.1, Chapter III]{DiBe93}, once having~\eqref{eq:lat dens 3} at hand. We sketch the proof for the convenience of the reader. 

Choose for $j\in\N_0$ the levels 
\begin{equation*}
k_j := \mu^+  - \big(1+2^{-j}\big) \eps_2\widetilde\omega,
\end{equation*}
and consider the Caccioppoli inequality, Lemma~\eqref{caccioppoli lemma}, with $Q=Q_j^2$, $k=k_j$, and $\phi=\phi_{2,j}$. Noting that $k_j\geq \mu^+  - \frac32\widetilde\omega\geq b+\eps$ and recalling that $T^2=[\eps_2\widetilde\omega]^{2-p}r^p$ we have 
\begin{multline}\label{Cac.standard.u}
\frac1{\min\{T_j^2,t_0\}}  \sup_{\max\{0,t_0-T_{j}^2\}<t<t_0}  \mean{B_{j}\cap\Omega}  \big[(v-k_j)_+^2\phi_{2,j}^p\big](\cdot,t) \dx\\
 + \mean{Q_{j}^2} \left|D(v-k_j)_+\phi_{2,j}\right|^p \dx\dt\\
\leq c \, \frac{2^{jp}}{r^p} \mean{Q_j^2}\Bigl((v-k_j)_+^p + (v-k_j)_+^2[\eps_2\widetilde\omega]^{p-2} \Bigr) \dx \dt\,,
\end{multline}
with $c\equiv c(n,p,\Lambda)$. Using 
\[
2^{-(j+1)}\eps_2\widetilde\omega\,\chi_{\{v> k_{j+1}\}}\leq (v-k_j)_+\leq  2\eps_2\widetilde\omega
\]
and Sobolev's inequality \eqref{Sobolev} (see also Remark \ref{rem.sobolev}) together with \eqref{Cac.standard.u}, we have for all $j\in\N_0$ 
\begin{align}\label{third.alt}
 &\big[2^{-(j+1)}\eps_2\widetilde\omega\big]^{2(1-1/\kappa)+p}A_{j+1}\leq \mean{Q_{j+1}^2} (v-k_j)_+^{2(1-1/\kappa)+p} \dx \dt  \notag \\
 &\leq c\, r^p \big[T^2\big]^{1-1/\kappa} \left[\frac1{\min\{T_j^2,t_0\}} \sup_{\max\{0,t_0-T_{j}^2\}<t<t_0}  \mean{B_{j}\cap\Omega}  \big[(v-k_j)_+^2\phi_{2,j}^p\big](\cdot,t) \dx\right]^{1-1/\kappa}\notag\\
 &\hspace{6cm}\times\mean{Q_{j}^2} \left|D(v-k_j)_+\phi_{2,j}\right|^p \dx\dt\notag\\
&\leq   c\, r^p \big[T^2\big]^{1-1/\kappa} \biggl[ \frac{2^{jp}}{r^p}    \mean{Q_j^2}\Bigl( (v-k_j)_+^p +  (v-k_j)_+^2[\eps_2\widetilde\omega]^{p-2} \Bigr) \dx \dt \biggr]^{2-1/\kappa} \notag\\
&\leq   c\, 2^{(2-1/\kappa)pj} r^{p+p(1-1/\kappa)-p(2-1/\kappa)}[\eps_2\widetilde\omega]^{(1-1/\kappa)(2-p)+p(2-1/\kappa)}\bar A_j^{2-1/\kappa} 
\end{align}
with 
\[
A_j := \mean{Q_j^2} \chi_{\{v>k_j\}} \dx \dt  =\frac{|Q_j^2\cap \{v > k_j\}|}{|Q_j^2|}\,.
\]
Thus $A_{j+1}\leq   c\, 2^{c(p,\kappa)j} A_j^{2-1/\kappa}$, with $c$ depending on $n,p,\Lambda,1/\kappa$. This yields \eqref{bound.u.thirdcase} in view of \eqref{eq:lat dens 3} and a standard hyper-geometric iteration lemma, provided $\eps_2$ is chosen small enough, in dependence of $n,p,\Lambda,\delta$ and $q$. Recall that $\kappa\equiv \kappa(q)$.

\subsubsection{Occurrence of~\eqref{eq:b1 lat} and~\eqref{lat alt.1}}
Set for $j\in\N_0$
\begin{equation*}
k_j := \mu^+ - 4(1+2^{-j})\widetilde\omega
\end{equation*}
and notice that $k_j<\mu^+ - 4\widetilde\omega$, which  together with~\eqref{eq:b1 lat} and \eqref{eq:lat init 1}$_2$ implies $b-\eps-k_j\geq\widetilde\omega$. Thus, using~\eqref{lat alt.1} we obtain 
\begin{align} \label{eq:2->1}
\sup_{\max\{0,t_0-T_{j+1}^3\}<t<t_0}  &\mean{\wt B_{j+1}\cap\Omega} \int_{k_j}^{v(\cdot,t)} H_{b,\eps}'(\xi)  (\xi-k_j)_+ \, d\xi \dx \notag\\
&\geq\sup_{\max\{0,t_0-\frac14T^3\}<t<t_0}  \mean{B_{r/4}\cap\Omega} \int_{b-\eps}^{v(\cdot,t)} H_{b,\eps}'(\xi)  (\xi-k_j)_+ \, d\xi \dx \notag\\
&\geq \widetilde\omega\sup_{\max\{0,t_0-\frac14T^1\}<t<t_0}  \mean{B_{r/4}\cap\Omega} \int_{\mu^+-3\wt\omega}^{v(\cdot,t)} H_{b,\eps}'(\xi)   \, d\xi \dx \notag\\
& > \eps_3^{-1}\widetilde\omega \left[ \eps_1 \omega \right]^{q}\,,
\end{align}
in view of \eqref{time.partialorder}. By Poincar\'e's inequality (see Remark \ref{rem.sobolev}) we have
\begin{equation}\label{Po.boundary}
 \mean{Q_{j+1}^3} (v-k_j)_+^p \dx \dt \leq c(n,p,\delta)\, r^p \mean{Q_{j+1}^3} |D(v-k_j)_+|^p \dx \dt,
\end{equation}
which together with \eqref{eq:2->1} and the Caccioppoli inequality with $Q=Q_j^3$, $k=k_j$, and $\phi=\phi_{3,j}$ yields 
\begin{align*}
& \notag  \frac{\eps_3^{-1} \widetilde\omega   \left[ \eps_1 \omega \right]^{q}}{T^3}  \mean{Q_{j+1}^3} (v-k_j)_+^p \dx \dt
\\  &  \leq  c\,\biggl[\frac{1}{\min\{T_{j+1}^3,t_0\}}  \sup_{\max\{0,t_0-T_{j+1}^3\}<t<t_0}  \mean{\wt B_{j+1}\cap\Omega} \int_{k_j}^{v(\cdot,t)} H_{b,\eps}'(\xi)  (\xi-k_j)_+ \, d\xi \dx\biggr]\times\\
&\hspace{7cm}\times\biggl[r^p \mean{Q_{j+1}^3} |D(v-k_j)_+|^p \dx \dt\biggr] \\
&  \leq  c\,2^{2pj}r^p\biggl[  \mean{Q_j^3}\Big( \frac{(v-k_j)_+^p}{r^p}+\frac{(v-k_j)_+^2}{T^3}\Big) \dx \dt\\
&\hspace{6cm}+ \mean{Q_j^3} \int_{k_j}^v H_{b,\eps}'(\xi)  \frac{(\xi-k_j)_+}{T^3} \, d\xi \dx\dt \biggr]^2.
\end{align*}
At this point, to bound both the left and the right-hand side, we use the following facts: first, we have
\[
2^{-(j-1)} \widetilde\omega\, \chi_{\{v>k_{j+1}\}}\leq (v-k_j)_+ \leq 8\widetilde\omega\,\chi_{\{v>k_j\}}\,;
\]
then, the definition of $T^3= \widetilde\omega^{1-p} r^p$ and also the fact that $\widetilde\omega\leq1$ yield
\[
2^{-pj}  \eps_3^{-1} \left[ \eps_1 \omega \right]^{q}  \mean{Q_{j+1}^3} \chi_{\{v>k_{j+1}\}}\dx\dt\\
\leq c\,2^{2pj}\,\biggl( \mean{Q_j^3}\chi_{\{v>k_j\}} \dx \dt \biggr)^2,
\]
with $c\equiv c(n,p,\Lambda,\delta)$. Denoting
\[
\bar A_j := \mean{Q_j^3} \chi_{\{v>k_j\}}\dx\dt= \frac{|Q_j^3 \cap \{v>k_j\}|}{|Q_j^3|}
\]
we hence finally have
\[
\bar A_{j+1} \leq   2^{3pj} \,\bar c\,\eps_3\left[ \eps_1 \omega  \right]^{-q} \bar A_{j}^{2}, 
\]
where the constant $\bar c$ depends on $n,p,\Lambda,\delta$, but it is independent of $\eps_1$.
Then, if 
\[
\bar A_0 \leq   \frac{\left[ \eps_1 \omega \right]^{q}}{\eps_3  2^{3p}\bar c}\,, 
\]
then the sequence $\{A_j\}$ becomes infinitesimal, in particular implying that
\begin{equation}\label{bound.u.firstcase}
\sup_{\frac14Q^3} v \leq \mu^+ - 4\widetilde\omega.  
\end{equation}
The above condition for $\bar A_0$ can be certainly guaranteed by taking 
\begin{equation}\label{eps3}
\eps_3 := \frac{1}{2^{3p} c_\ell \bar c},
\end{equation}
since Lemma~\ref{lemma:lat dens}, equation \eqref{eq:lat dens 2}, gives us exactly
\begin{equation*}
\frac{|Q_0^3 \cap \{ v > k_0\}|}{|Q_0^3|} \leq c_\ell {[\eps_1 \omega]}^q;
\end{equation*}
recall that we are assuming here~\eqref{eq:b1 lat}. Note carefully that now the parameter $\eps_3$ has been fixed as a parameter of $n,p,\Lambda,\delta$, but it is independent of $\eps_1$.

\subsubsection{Occurrence of~\eqref{eq:b1 lat} and~\eqref{lat alt.2}}
We set this time for $j\in\N_0$
\begin{equation*}
k_j := \mu^+  - \big(1+2^{-j}\big) \eps_1 \omega. 
\end{equation*}
Choosing $Q=Q_j^1, k=k_j$ and $\phi=\phi_{1,j}$ the Caccioppoli's estimate takes the form
\begin{align*}
  &\frac1{\min\{T_j^1,t_0\}}  \sup_{\max\{0,t_0-T_j^1\} <t<t_0}  \mean{B_{j}\cap\Omega}  \big((v-k_j)_+^2 \phi_{1,j}^p\big)(\cdot,t) \dx \\
   & \hspace{6cm}+ \mean{ Q_j^1} |D(v-k_j)_+\phi_{1,j}|^p \dx\dt\\
 &\quad\leq c \, 2^{pj}  \bigg[  \mean{Q_{j}^1}\left( \frac{(v-k_j)_+^p}{r^p} +  \frac{(v-k_j)_+^2}{T^1} \right) \dx \dt\\
 &\hspace{6cm}+  \frac{1}{T^1} \mean{Q_j^1} \int_{k_j}^v H_{b,\eps}'(\xi)  (\xi-k_j)_+ \, d\xi \dx\dt \bigg].
\end{align*}
Now using $(v-k_j)_+\leq2\eps_1\omega$,  H\"older's inequality and \eqref{lat alt.2} yields 
\begin{align*}
 &\mean{Q_j^1}  \int_{k_j}^v H_{b,\eps}'(\xi)  (\xi-k_j)_+ \, d\xi \dx\dt \notag\\
 &\leq 2\eps_1\omega \biggl(\mean{Q_j^1} \biggl[\int_{\mu^+- 3{\widetilde\omega}}^v H_{b,\eps}'(\xi)\,d\xi\biggr]^q \dx\dt\biggr)^{\frac1q} \biggl(\mean{Q_j^1} \chi_{\{v>k_j\}}\dx\dt\biggr)^{1-\frac1q}\notag\\
 &\leq 2\eps_1\omega \biggl(\sup_{\max\{0,t_0-T_j^1\} <t<t_0}\mean{B_{r/4}\cap\Omega} \int_{\mu^+- 3{\widetilde\omega}}^{v(\cdot,t)} H_{b,\eps}'(\xi)\,d\xi\dx\biggr)^{\frac1q} \biggl(\mean{Q_j^1} \chi_{\{v>k_j\}}\dx\dt\biggr)^{1-\frac1q}\notag\\
 &\leq c\,\eps_3^{-1/q} [\eps_1\omega]^2 \widetilde A_j^{1-\frac1q}\,,
\end{align*}
setting
\[
\widetilde A_j := \mean{Q_j^1} \chi_{\{v>k_j\}}\dx\dt=\frac{| Q_j^1\cap \{v > k_j\}|}{|Q_j^1|}\,,
\]
recalling that $q>1$ and $\int_\R H_{b,\eps}'\,d\xi\leq 1$. Also recall that $\eps_3$ is fixed and depends only on $n,p,\Lambda,\delta$.
Combining the two displays above and recalling that $T^1 = [\eps_1 \omega]^{2-p} r^p$, we obtain
\begin{multline*}
\frac1{\min\{T_j^1,t_0\}}  \sup_{\max\{0,t_0-T_j^1\} <t<t_0}  \mean{B_{j}\cap\Omega}  \big((v-k_j)_+^2 \phi_{1,j}^p\big)(\cdot,t) \dx   + \mean{ Q_j^1} |D(v-k_j)_+\phi_{1,j}|^p \dx\dt\\
\leq c \, 2^{pj}\frac{\left[\eps_1 \omega\right]^p}{r^p} \left(\widetilde A_j+\eps_3^{-1/q}\widetilde A_j^{1-\frac1q}\right)\leq c\,\eps_3^{-1/q} \, 2^{pj}  \frac{\left[\eps_1 \omega\right]^p}{r^p} \widetilde A_j^{1-1/q}\,.
\end{multline*}
To conclude, by Sobolev's inequality \eqref{Sobolev} with $\phi:=\phi_{1,j}$, $w=(v-k_j)_+$, $B=B_j\cap\Omega$ and $\Gamma=\left(\max\{0,t_0-T_j^1\},t_0\right)$ we infer
\begin{align*}
&\mean{Q_{j+1}^1} (v-k_j)_+^{2(1-1/\kappa)  + p } \dx \dt   \\
&    \leq  
c\, r^p \big[T^1\big]^{1-1/\kappa} \biggl[ \frac{1}{\min\{T_j^1,t_0\}} \sup_{\max\{0,t_0-T_{j}^1\}<t<t_0} \mean{B_j\cap\Omega}  \big((v-k_j)_+^2 \phi_{1,j}^p\big)(\cdot,t) \dx \biggr]^{1-1/\kappa} \times\notag\\
&\hspace{7cm}\times \mean{Q_j^1} {|D(v-k_j)_+\phi_{1,j}|}^p \dx \dt\notag\\
&  \leq  c\,\eps_3^{-(2-1/\kappa)/q} r^{p+p(1-1/\kappa)-p(2-1/\kappa)} [\eps_1 \omega ]^{(2-p)(1-1/\kappa)+p(2-1/\kappa)}\times\\
&\hspace{7cm}\times 2^{p(2-1/\kappa)j} \widetilde A_j^{(1 -1/q )(2-1/\kappa) } \,,\notag
\end{align*}
with $c$ depending on $n,p,\Lambda,q,\delta$. Estimating finally
\[
(v-k_j)_+ \geq 2^{-(j+1)}[\eps_1 \omega] \chi_{\{v>k_{j+1}\}}
\]
we conclude with
\[
\widetilde A_{j+1} \leq \tilde c\,\eps_3^{-(2-1/\kappa)/q}\,2^{4pj} \widetilde  A_j^{1+\zeta},
\]
where $\zeta:=(1 -1/q )(2-1/\kappa)-1>0$ by~\eqref{kappaq} and $\tilde c$ depends only on $n,p,\Lambda,\delta$ and $q$. 
Hence by choosing 
\begin{equation}\label{eps1}
\eps_1: =\min\left\{ \exp\left[-\left(c_\ell\,\tilde c^{1/\zeta}2^{4p/\zeta^2}  \eps_3^{-(1/p+(2-1/\kappa)/(\zeta q))}  \right)^{p'}\right],\eps_2^{p-2},\eps_2,2^{-10}\right\}\,,
\end{equation}
we get by~\eqref{eq:lat dens 1} that
\[
\widetilde A_0 \leq  \left[\tilde c\,\eps_3^{-(2-1/\kappa)/q}\right]^{-1/\zeta}2^{-4p/\zeta^2}
\]
and again a standard hyper-geometric iteration lemma ensures that
\begin{equation}\label{bound.u.secondcase}
\sup_{\frac1{16}Q^1} v \leq \mu^+ - \eps_1 \omega .  
\end{equation}
Note that, taking into account the fact that $\eps_2$ has already been fixed in Paragraph \eqref{uf} as constant depending on $n,p,\Lambda,\delta$ and $q$ and also $\eps_3$ has been fixed in \eqref{eps3} depending only on $n,p,\Lambda,\delta$, now also $\eps_1$ is fixed as a constant depending only on $n,p,\Lambda,\delta$ and $q$.

\subsubsection{Conclusion}\label{vartheta.fixed}
All in all, merging the three different alternatives that yield \eqref{bound.u.thirdcase}, \eqref{bound.u.firstcase} and \eqref{bound.u.secondcase}, then adding $-\inf_{\frac1{16}Q^1}v\leq-\mu^-$, we have proved that if $v$ is a solution to \eqref{the equation for v.bnd} and \eqref{eq:lat init 1} holds, then
\begin{equation}\label{allinall}
\osc_{\frac1{16}Q^1} v \leq \osc_{Q^3}v - \eps_1 \widetilde\omega \leq \osc_{Q^3}v - \eps_1^2\omega \exp\big(-[\eps_1\omega]^{-p'q}\big)\,.
\end{equation}
Indeed if $b$ satisfies \eqref{tri.assumption}, then \eqref{allinall} is what we proved on the previous pages. On the other hand, if $b\not \in [\inf_{Q^3} v, \sup_{Q^3} v]$, we are essentially in the same situation as described in Paragraph \ref{uf} and therefore also in this case \eqref{bound.u.thirdcase}, and hence \eqref{allinall}, holds. Note that if $b \not \in [\inf_{Q^3} v, \sup_{Q^3} v]$, then for $\eps$ small enough $v$ is a solution to the evolutionary $p$-Laplace equation, and the oscillation reduction follows in general by the well-known argument of DiBenedetto, see~\cite{DiBe93, Urba08}; however, referring also in this case to Paragraph \ref{uf} allows for a unitary treatment of these alternatives.

\begin{remark}\label{vartheta.fixed.2}
Note that in case 
\begin{equation}\label{red.osc.inf}
\inf_{\overline Q^3\cap \partial_p \Omega_T } \tilde g \geq \mu^- +\frac{\omega}{8}\,,\qquad\qquad \eps \leq \frac{\widetilde{\omega}}{2}\,, 
\end{equation}
holds in place of \eqref{eq:lat init 1}, then \eqref{allinall} still holds  since $-v$ solves an equation similar to \eqref{the equation for v.bnd} with boundary datum $-\tilde g$.
\end{remark}

\subsection{Reducing the oscillation at the initial boundary}
Let us take $x_0 \in \overline{ \Omega}$. Similarly to the previous Paragraph, here we denote, for some $\omega>0$
\begin{equation}\label{T.ini}
Q:= \big(B_r(x_0) \cap \Omega\big) \times (0,T^4), \qquad T^4 :=   \min\{\omega^{2-p}  r^p,T\}
\end{equation}
and we consider the function $v$ solving \eqref{the equation for v.bnd} with Cauchy-Dirichlet datum $\tilde g$. Let us remind the reader that the Caccioppoli's inequality of Lemma \ref{caccioppoli lemma} is valid for $v$ also in this case.  We can then follow the steps in~\cite[Chapter III, Section 11]{DiBe93} using time independent cut-off functions and we can reduce the problem to the analysis of the standard evolutionary $p$-Laplace equation; we briefly present the proof adapted to our setting. 

The next result is a standard ``Logarithmic Lemma", see for example the proof in~\cite[Chapter II]{DiBe93}. The assumption in \eqref{starrr} will be satisfied by imposing a proper condition between the solution and the initial trace $g(\cdot,0)$, see \eqref{bound.nontrivial.1}.
\begin{lemma}\label{lemma: log lemma}
Let $Q$ and $T^4$ be as in \eqref{T.ini}, and assume that $v \in C(\overline Q)$ solves~\eqref{the equation for v.bnd} in $Q$ and
\begin{equation}\label{starrr}
\sup _{B_r(x_0)\cap\Omega}v(\cdot,0) \leq \sup_Q v - \frac{\omega}{8}\,. 
\end{equation}
Then, for a constant $c$ depending on $n,p,\Lambda$, there holds
\begin{equation}\label{tritri}
\frac{\left| \big(B_{r/2} (x_0)\cap\Omega\big)\cap \left\{ v(\cdot , \tau ) \geq  \sup_Q v - {\theta}\, \omega/8 \right\} \right| }{| B_{r/2}  (x_0)\cap\Omega | } \leq  \, \frac{c}{ \log (1/{\theta}) } 
\end{equation}
whenever ${\theta} \in (0,1)$ and $\tau \in (0,T^4)$.
\end{lemma}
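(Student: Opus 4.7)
The plan is to run the classical DiBenedetto-style logarithmic argument (cf.\ Chapter II of~\cite{DiBe93}), adapted so that the extra term $\partial_t H_{b,\eps}(v)$ in \eqref{the equation for v.bnd} has the favourable sign against a carefully chosen test function. I would begin by setting $\mu^+ := \sup_Q v$, $k := \mu^+ - \omega/8$, $H := \omega/8$, $c := \theta H$, and introducing the truncated logarithmic function
$$\Psi(s) := \left[\log\frac{H}{H - (s-k)_+ + c}\right]_+,$$
which vanishes for $s \leq k+c$ and, on $\{\Psi>0\}$, satisfies $\Psi'(s) = 1/(H-(s-k)_+ +c)$, $\Psi'' = (\Psi')^2$, $\Psi \leq \log(1/\theta)$, and $\Psi' \geq 1/H$.

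The key step is to test the weak formulation on the time slab $(0,\tau)$ against $\varphi := 2\Psi(v)\Psi'(v)\phi^p$, with $\phi = \phi(x)$ a \emph{time-independent} spatial cut-off equal to $1$ on $B_{r/2}(x_0)$, supported in $B_r(x_0)$, and with $|D\phi| \leq c/r$. The time independence of $\phi$ is decisive, as it collapses the parabolic contribution into
$$\int_{B_r\cap\Omega}\big[\Psi^2(v(\cdot,\tau)) + \Theta(v(\cdot,\tau))\big]\phi^p\dx,$$
where $\Theta'(s) := 2\Psi(s)\Psi'(s) H_{b,\eps}'(s) \geq 0$. Both initial traces vanish by the assumption~\eqref{starrr}, which guarantees $v(\cdot,0) \leq k$ and hence $\Psi(v(\cdot,0)) = 0$; the $\Theta$-term at time $\tau$ may be dropped for being non-negative. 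In the diffusion contribution, the piece along $Dv$ equals $2(1+\Psi)(\Psi')^2\langle \widetilde{\mathcal{A}},Dv\rangle\phi^p \geq 2\Lambda^{-1}(1+\Psi)(\Psi')^2|Dv|^p\phi^p$ by ellipticity, while the $D\phi$-piece is controlled by the growth bound and a weighted Young inequality
$$\Psi\Psi'|Dv|^{p-1}\phi^{p-1}|D\phi| \leq \epsilon (1+\Psi)(\Psi')^2|Dv|^p\phi^p + C_\epsilon\,\Psi\,(\Psi')^{2-p}|D\phi|^p,$$
after which absorption produces
$$\int_{B_r\cap\Omega}\Psi^2(v(\cdot,\tau))\phi^p\dx \leq C\int_0^\tau\int_{B_r\cap\Omega}\Psi\,(\Psi')^{2-p}|D\phi|^p\dx\dt.$$

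The right-hand side is estimated by combining the elementary bounds $\Psi \leq \log(1/\theta)$, $(\Psi')^{2-p} \leq H^{p-2}$ (which holds for $p \geq 2$ since $\Psi' \geq 1/H$), $|D\phi| \leq c/r$, and the intrinsic length $\tau \leq T^4 \leq \omega^{2-p}r^p$; the powers of $\omega$ cancel and the bound collapses to $C\log(1/\theta)|B_r|$. For the left-hand side, on the super-level set $\{v(\cdot,\tau) \geq \mu^+ - \theta\omega/8\}$ one has $(v-k)_+ \geq (1-\theta)H$, hence $\Psi \geq \log(1/(2\theta)) \geq \tfrac12\log(1/\theta)$ for $\theta \leq 1/4$; the range $\theta \in [1/4,1)$ is trivial by enlarging $c$. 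Comparing the two bounds and dividing by $|B_{r/2}\cap\Omega|$ yields \eqref{tritri}. The main obstacle I expect is the Young splitting, which must use the weight $(1+\Psi)(\Psi')^2$ rather than the naive $(\Psi')^2$ so that the intrinsic scale $T^4 = \omega^{2-p}r^p$ rebalances the factor $(\Psi')^{2-p}|D\phi|^p$ exactly and leaves only the desired logarithmic dependence; a secondary point, in the case $x_0 \in \partial\Omega$, is to extend $(v-k)_+$ by zero outside $\Omega$ using \eqref{eq:lateral density}, exactly as explained in Remark~\ref{rem.sobolev}.
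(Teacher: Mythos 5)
Your proposal is essentially the paper's own proof: the same DiBenedetto logarithmic function (your $\Psi$ with $H=\omega/8$, $c=\theta H$ coincides with the paper's $\varPsi$), tested against $\Psi(v)\Psi'(v)\phi^p$ with a time-independent spatial cutoff, the Heaviside term absorbed through its sign (the paper uses $\mathcal H'\geq 1$ where you split off the nonnegative $\Theta$-term), the same weighted Young absorption with $(1+\Psi)(\Psi')^2$, the same cancellation via $T^4\leq\omega^{2-p}r^p$, and the same lower bound $\Psi\geq\log(1/(2\theta))$ on the superlevel set, with $\theta$ near $1$ handled trivially. The only caveat, which the paper shares and resolves by citation, is that the formal testing requires a mollification-in-time argument and the observation that the test function vanishes near $\partial_p\Omega_T$.
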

\begin{proof}
Denote in short $\widetilde{\mathcal{A}}(Dv):=\widetilde{\mathcal{A}}(x,t,v,Dv)$, $\hat B:=B_r(x_0)\cap\Omega$ and $\mathcal H$ as in \eqref{Acca}, with $b$ replacing $a$. Consider a time independent cut-off function $\phi \in C_c^\infty(B_r (x_0))$, $0\leq \phi \leq 1$, with $\phi \equiv 1$ in $B_{r/2}$, $\phi = 0$ on $\partial B_r (x_0)$, and $|D \phi | \leq c/r$. Take $k = \sup_Q v - \omega/8$ and define for ${\theta}\in (0,1/8]$ the function
\[
\varPsi(v) = \biggl[\log \left( \frac{\omega}{\omega(1+{\theta}) -8 (v-k)_+} \right)\biggr]_+\,.
\]
We have $\varPsi(v) \neq 0$ when $v>\sup_Q v-\omega(1-{\theta})/8=:v_- > \sup_Q v - \omega/8>\omega/2$ (note that if $\sup_Q v\leq 3\omega/4$ there is nothing to prove, since \eqref{tritri} would be trivial). Observe that we have
\[
\varPsi^\prime (v) = \chi_{\{ v > v_-\}} \frac{8}{\omega(1+{\theta}) - 8(v-k)_+ }.
\]
Testing formally the equation with $\eta = \varPsi^\prime (v) \varPsi(v) \phi^p\chi_{(-\infty, \tau)}(t)$, for $\tau \in (0,T^4)$, which vanishes in a neighborhood of $\partial_p\Omega_T$ being continuous and zero on $\partial_p\Omega_T$, we have
\[
- \int_{\hat B\times(0,\tau)}  \langle\widetilde{\mathcal{A}}(Dv), D \eta \rangle \dx \dt = \int_{\hat B\times(0,\tau)}  \partial_t \mathcal H(v) \eta \dx\dt.
\]
To be precise, this choice of the test function is admissible only after a suitable mollification in time; see for instance the steps in the end 
of the proof of \cite[Lemma 2.3]{BKU} for a rigorous treatment of the parabolic term in this setting. Indeed one should prove the estimate not directly up to $t=0$ but $t=\eps$, for $\varepsilon$ (the mollification parameter) small enough, and then pass to the limit. We have
\[
\partial_t \mathcal{H}(v) \varPsi^\prime (v) \varPsi(v) = \partial_t \int_{v_-}^v \mathcal{H}^\prime(\xi) \varPsi^\prime (\xi) \varPsi(\xi) \, d\xi
 \]
and integration by parts gives 
\[
\int_{\hat B\times(0,\tau)} \partial_t \mathcal{H}(v) \varPsi^\prime (v) \varPsi(v) \phi^p \dx \dt = \int_{\hat B} \int_{v_-}^{v(\cdot,t)}
\mathcal{H}^\prime(\xi) \varPsi^\prime (\xi) \varPsi(\xi) \, d\xi \phi^p \dx \bigg|_{t=0}^{\tau} \,,
\]
since $\phi$ is time independent and recalling that  $v \in C(\overline Q)$. Since $v < v_-$ on $\hat B \times \{0\}$, we have that the term on the right-hand side for $t=0$ is zero.
Therefore
\[
\int_{\hat B\times(0,\tau)} \partial_t \mathcal{H}(v) \varPsi^\prime (v) \varPsi(v) \phi^p \dx \dt  =  \int_{\hat B} \int_{v_-}^{v(x,\tau)}
\mathcal{H}^\prime(\xi) \varPsi^\prime (\xi) \varPsi(\xi) \, d\xi \phi(x)^p \dx
\]
and since $\mathcal H^\prime \geq 1$ and  $\varPsi(v_-)=0$, we obtain 
\[
\int_{\hat B} \varPsi^2 (v(\cdot,\tau))  \phi^p \dx \leq 2 \int_{\hat B\times(0,\tau)} \partial_t \mathcal{H}(v) \varPsi^\prime (v) \varPsi(v) \phi^p \,dx \dt.
\]
As for the elliptic term, we get from \eqref{tilde inf}
\begin{align*}
-\int_{\hat B\times(0,\tau)}\langle\widetilde{\mathcal{A}}(Dv), D\eta \rangle \dx \dt  &=- \int_{\hat B\times(0,\tau)}
\langle\widetilde{\mathcal{A}}(Dv),Dv\rangle (1+\varPsi(v)) \left[ \varPsi^\prime(v) \right]^2 \phi^p  \dx \dt \\
&\qquad- \int_{\hat B\times(0,\tau)}\langle\widetilde{\mathcal{A}}(Dv),D\phi^p\rangle  \, \varPsi^\prime(v) \varPsi(v)  \dx \dt\\
&\leq -c(p,\Lambda)\int_{\hat B\times(0,\tau)} |Dv|^p (1+\varPsi(v)) \left[ \varPsi^\prime(v) \right]^2 \phi^p  \dx \dt\\
&\qquad +c(p,\Lambda)  \int_{Q}  \varPsi(v) \left[ \varPsi^\prime(v) \right]^{2-p} |D\phi|^p \dx \dt,
\end{align*}
using Young's inequality. We thus obtain, discarding the negative term on the right-hand side
\[
\int_{\hat B} \varPsi^2 (v(\cdot,\tau))  \phi^p \dx \leq c  \int_{Q}   \varPsi(v) \left[ \varPsi^\prime(v) \right]^{2-p}|D\phi|^p \dx \dt;
\]
this holds for all $\tau \in (0,T^4]$.  The very definitions of $\varPsi$ and $T^4$ then imply
\[
\int_{\frac12 \hat B} \left[ \varPsi(v(\cdot,\tau)) \right]^2 \dx \leq   c  \, \frac{| \hat B |\,T^4}{r^p} \log \frac{1}{{\theta}} \Big(\frac\omega{8}\Big)^{p-2}\leq   c \,\big|{\textstyle \frac12} \hat B \big| \log \frac{1}{{\theta}},
\]
since ${\theta} \omega/8< (v-k)_+\leq\omega/8$ in $\{\varPsi(v) \neq 0\}$.
Moreover, the left-hand side can be bounded from below as
\[
\int_{\frac12\hat B} \left[ \varPsi(v(\cdot,\tau)) \right]^2 \dx \geq \left| {\textstyle \frac12}\hat B \cap \left\{ v(\cdot , \tau ) \geq \sup_Q v - \theta\, \omega/8 \right\} \right| \Big( \log \frac{1}{2{\theta}} \Big)^2
\]
and we conclude with
\[
\frac{\left| {\textstyle \frac12}\hat B \cap \left\{ v(\cdot , \tau ) \geq \sup_Q v - \theta \,\omega/8\right\} \right| }{| {\textstyle \frac12}\hat B  | } \leq c\, \frac{\log(1/\theta)}{[\log (1/(2\theta))]^2}\leq \frac{c}{\log (1/\theta)}\,.
\]
\end{proof}

Therefore, if \eqref{starrr} holds, then for all $\nu_\ast \in (0,1)$ we find $\eps_4 \equiv \eps_4(n,p,\Lambda,\nu_\ast)$ such that after integration, denoting $\sigma Q:=(B_{\sigma r}\cap\Omega)\times(0,T)$ for $\sigma\in(0,1]$, we have 
\begin{equation*}
\Big| {\textstyle \frac12} Q\cap \Big\{ v\geq  \sup_Qv - 2\eps_4\, \omega \Big\} \Big| \leq \nu_\ast \big| {\textstyle \frac12} Q\big| .
\end{equation*}
We can now deduce the following.

\begin{proposition}\label{initial.red.osc}
Let $v$ be a solution to \eqref{the equation for v.bnd} in $Q$ and suppose that \eqref{starrr} holds for some $\omega>0$. Then
\begin{equation}\label{initial.reduction}
\sup_{\frac14 Q} v \leq \sup_Q v  - \eps_4 \omega\,,
\end{equation}
where $\eps_4$ is a constant depending on $n,p,\Lambda,\delta$ and $q$.
\end{proposition}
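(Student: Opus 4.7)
The plan is to carry out a classical De Giorgi iteration starting from the density estimate produced just above the statement, namely
\[
\Big|{\textstyle\frac12}Q\cap\big\{v\geq \sup_Q v-2\eps_4\omega\big\}\Big|\leq \nu_\ast\big|{\textstyle\frac12}Q\big|,
\]
where $\nu_\ast\in(0,1)$ will be fixed at the end of the argument. I introduce the increasing truncation levels $k_j:=\sup_Q v-(1+2^{-j})\eps_4\omega$, the shrinking cylinders $Q_j:=\sigma_j Q$ with $\sigma_j:=\tfrac14(1+2^{-j})$ (so that $Q_0=\tfrac12 Q$ and $Q_j\searrow\tfrac14 Q$), and \emph{time-independent} cut-offs $\phi_j\in C^\infty_c(B_{\sigma_j r}(x_0))$ satisfying $\phi_j\equiv 1$ on $B_{\sigma_{j+1}r}(x_0)$ and $|D\phi_j|\leq c\,2^j/r$.

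The decisive simplification with respect to the lateral-boundary analysis is that the time-independence of $\phi_j$ makes every term multiplied by $(\partial_t\phi_j^p)_+$ on the right-hand side of the Caccioppoli estimate \eqref{caccioppoli estimate} disappear; in particular the singular Heaviside contribution drops out for free, and no sub-alternative is needed here. Imposing $\eps_4\leq 1/16$ yields $k_j\geq \sup_Q v-\omega/8\geq \sup_{B_r(x_0)\cap\Omega} v(\cdot,0)$ by \eqref{starrr}, so that $(v-k_j)_+(\cdot,0)\equiv 0$; combined with the standard extension-by-zero trick based on the outer density condition \eqref{eq:lateral density} when $x_0\in\partial\Omega$ (cf.\ Remark \ref{rem.sobolev}), this guarantees that $(v-k_j)_+\phi_j^p$ is an admissible test function and reduces Lemma \ref{caccioppoli lemma} to the purely $p$-Laplacian form
\begin{multline*}
\frac{1}{T^4}\sup_{t\in(0,T^4)}\mean{B_j\cap\Omega}\big[(v-k_j)_+^2\phi_j^p\big](\cdot,t)\dx\\
+ \mean{Q_j}\big|D((v-k_j)_+\phi_j)\big|^p\dx\dt\leq c\,\frac{2^{pj}}{r^p}\mean{Q_j}(v-k_j)_+^p\dx\dt.
\end{multline*}

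From this point the argument follows the scheme of Paragraph \ref{uf}: I insert both bounds above into the parabolic Sobolev inequality \eqref{Sobolev} applied to $(v-k_j)_+\phi_j$ on $Q_{j+1}$, and I exploit $(v-k_j)_+\leq 2\eps_4\omega$ on $Q_j$ together with $(v-k_j)_+\geq 2^{-(j+1)}\eps_4\omega$ on $\{v>k_{j+1}\}$. The intrinsic choice $T^4\leq\omega^{2-p}r^p$ makes all powers of $r$ and $\omega$ cancel precisely as in \eqref{third.alt}, leaving a recursion of the form
\[
\bar A_{j+1}\leq c\,2^{c'j}\,\bar A_j^{\,2-1/\kappa},\qquad \bar A_j:=\frac{|Q_j\cap\{v>k_j\}|}{|Q_j|},
\]
with $2-1/\kappa>1$ and constants $c,c'$ depending only on $n,p,\Lambda,\delta,q$ (the $q$-dependence entering only through $\kappa$ in the case $p=n$, as fixed in Paragraph \ref{Sobo.paragraph}). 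A standard hyper-geometric iteration lemma then provides a threshold $\nu_0=\nu_0(n,p,\Lambda,\delta,q)$ such that $\bar A_0\leq\nu_0$ forces $\bar A_j\to 0$; choosing $\nu_\ast:=\nu_0$ in the preliminary density estimate fixes $\eps_4$ in the same dependency class and yields exactly \eqref{initial.reduction}. No genuine obstacle is expected: the time-independent cut-off neutralizes the singular term at no cost, and what remains is a routine adaptation of the arguments in Paragraph \ref{uf} and in \cite[Chapter III, Section 11]{DiBe93}.
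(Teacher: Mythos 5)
Your argument is correct and essentially coincides with the paper's own proof: the time-independent cut-offs remove the singular term from the Caccioppoli estimate of Lemma \ref{caccioppoli lemma}, and the De Giorgi iteration with levels $k_j=\sup_Q v-(1+2^{-j})\eps_4\omega$ on cylinders shrinking from $\frac12 Q$ to $\frac14 Q$, combined with Sobolev's inequality \eqref{Sobolev} and fed by the density estimate obtained from Lemma \ref{lemma: log lemma}, fixes $\nu_\ast$ through the hyper-geometric iteration lemma and hence $\eps_4$, exactly as in the paper. The only (cosmetic) difference is in how admissibility of $(v-k_j)_+\phi_j^p$ is phrased: the paper invokes the vanishing of $(v-k_j)_+$ near $\partial_p\Omega_T$ by boundary continuity, which rests on the same implicit control of the lateral datum that underlies your extension-by-zero step.
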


\begin{proof}
Note that taking independent of time cut-off functions, the Caccioppoli's inequality does not contain the terms containing $H'_{b,\eps}$ on the right-hand side. In particular we set
\[
Q_j :=\Big(B_{\sigma_jr}(x_0)\cap\Omega\Big)\times (0,T^4)=:B_j \times (0,T^4),\qquad \sigma_j=\frac14\big(1+2^{-j}\big),
\]
and we have
\begin{multline*}
\frac1{T^4}\sup_{0<t<T^4}  \mean{B_{j+1} }  \big[(v-k)_+^2\big](\cdot,t) \dx   + \mean{Q_{j+1} } |D(v-k)_+|^p \dx\dt\\
\leq c \, 2^{jp}    \mean{Q_j } \frac{(v-k)_+^p}{r^p}  \dx \dt.
\end{multline*}
Setting $k_j:=\sup_{Q} v  - (1+2^{-j})\eps_4 \omega$ and using Sobolev's inequality \eqref{Sobolev} (possibly the boundary version mentioned in the last remark of Paragraph \ref{Sobo.paragraph}) we infer, with $\kappa$ defined in~\eqref{eq:kappa} and the agreement in \eqref{agree},
\begin{align*}
 \mean{Q_{j+1}} &(v-k_j)_+^{2(1-1/\kappa)+p} \dx \dt   \\
&\leq   c(n,p,\delta)\, r^p \big[T^4\big]^{1-1/\kappa} \biggl[ \frac{2^{jp}}{r^p}    \mean{Q_j} (v-k_j)^p_+ \dx \dt \biggr]^{2-1/\kappa} \notag\\
&\leq   c\, 2^{c(p,\kappa)j} \omega^{(1-1/\kappa)(2-p)+p(2-1/\kappa)}\biggl[  \mean{Q_j} \chi_{\{v > k_j\}} \dx \dt \biggr]^{2-1/\kappa}\notag.
\end{align*}
Note all this is possible since $k_j \geq \sup_{Q} v-\omega/8$ when $\eps_4$ is small enough, and hence $(v-k_j)_+$ vanishes in a neighborhood of $\partial_{\rm par} \Omega_T$ by the boundary continuity of $v$. Now reasoning as after \eqref{third.alt}, a standard hyper-geometric iteration lemma yields \eqref{initial.reduction} provided that $\nu_*$ is chosen small enough, depending on $n,p,\Lambda,\delta$ and $q$; this finally fixes $\eps_4$.
\end{proof}

\section{The approximate boundary continuity}
The goal of this Section is the iteration of the results of the previous Section; this will give in a standard way, as a consequence, the boundary continuity. Moreover, we shall show how to explicitly infer the modulus described in \eqref{choice.modulus.boundary}.

\subsection{Iterative estimates}
The goal of the next Proposition will be twofold. On the one hand, we show how to set the estimates \eqref{allinall} and \eqref{initial.reduction} into an iterative scheme. On the other hand, we unify the interior (presented in \cite{BKU}), initial and lateral boundary cases in order to have estimates slightly more manageable.

\begin{proposition}\label{interior.continuity.iterated}
Let $R_0\leq r_\Omega$, $(x_0,t_0)\in\overline{\Omega}_T$ and $q>\bar q$, where $\bar q\geq 2$ has been defined in \eqref{eq:lat q cond}; set
\[
\alpha:=\frac{1}{p'q}\in \Big(0,\frac{1}{p'\bar q}\Big).
\]
Then there exist constants $\vartheta,\tau\in(0,1/2)$ depending only on $n,p,\Lambda,\delta$ and $q$ such that for any decreasing sequence $\{\omega_j\}_{j\in\N_0}$ with 
\begin{equation}\label{omega_j}
\omega_0:=1,\qquad \omega_{j+1}\geq\omega_j\Big(1 - \vartheta \exp\big(-[\vartheta\omega_{j}]^{-1/\alpha}\big)\Big)
\end{equation}
and moreover defining for $j\in\N_0$ 
\begin{equation}\label{choices_j}
\begin{split}
&\wt\omega_j:=\tau\omega_j\exp\left(-[\tau\omega_j]^{-1/\alpha}\right)\\
&R_{j+1}:=\exp\left(-\frac{\vartheta}{\alpha}[\vartheta\omega_j]^{-1/\alpha}\right)R_j, \qquad T_{j} :=  \wt\omega_j^{1-p}R_j^p \\
&Q^j:=\left(B_{R_j}(x_0) \times\left(t_0-T_j,t_0 + T_j\right) \right) \cap \overline{\Omega}_T,
\end{split}
\end{equation}
we have the following: If $v$ is a continuous weak solution to \eqref{the equation for v.bnd} in $Q^j$  with $\eps\leq\wt\omega_j/2$ and such that
\begin{equation}\label{assumption_j}
\osc_{Q^j} v \leq \omega_j
\end{equation}
for some $j\in\N_0$, then 
\begin{equation}\label{claim}
\osc_{Q^{j+1}} v \leq \max\big\{\omega_{j+1},2\osc_{\overline Q^j\cap\partial_p\Omega_T}\tilde g\big\}.
\end{equation}
\end{proposition}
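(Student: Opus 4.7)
The plan is to split into three cases according to where $Q^j$ meets $\partial_p\Omega_T$ (fully interior, touching only the initial trace $\Omega\times\{0\}$, or touching the lateral boundary $\partial\Omega\times(0,T)$), and, in each case, either invoke the corresponding reduction of oscillation (interior from \cite{BKU}, initial from Proposition~\ref{initial.red.osc}, lateral from \eqref{allinall}) or use $2\osc_{\overline{Q}^j\cap\partial_p\Omega_T}\tilde g$ to dominate the right-hand side of \eqref{claim}. The free constants $\vartheta$ and $\tau$ will be fixed at the end as small quantities depending only on $n,p,\Lambda,\delta$ and $q$, extracted from the constants $\eps_1,\eps_4$ and the interior factor.

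\emph{Dichotomy.} Set $\mu^\pm:=\sup_{Q^j}v,\,\inf_{Q^j}v$. If $\osc_{Q^j}v\le\omega_{j+1}$, \eqref{claim} is immediate from $Q^{j+1}\subset Q^j$. Otherwise $\osc_{Q^j}v>\omega_{j+1}\ge\omega_j/2$, the last inequality following from \eqref{omega_j} once $\vartheta<1/2$. Should neither of the two one-sided bounds
\[
\sup_{\overline{Q}^j\cap\partial_p\Omega_T}\tilde g\le\mu^+-\omega_j/8,\qquad \inf_{\overline{Q}^j\cap\partial_p\Omega_T}\tilde g\ge\mu^-+\omega_j/8
\]
hold, the pigeonhole-type inequality $\osc\tilde g>(\mu^+-\mu^-)-\omega_j/4$ combined with $\osc_{Q^j}v\ge\omega_j/2$ gives $2\osc\tilde g>\osc_{Q^j}v\ge\osc_{Q^{j+1}}v$, so \eqref{claim} follows with the second argument of the maximum being the larger one. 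By the $v\mapsto-v$ symmetry, together with Remark~\ref{vartheta.fixed.2} for the lateral case, one may therefore assume the first one-sided bound.

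\emph{Reduction.} Under this hypothesis we distinguish three subcases. In the interior case we apply the oscillation reduction from \cite{BKU}. In the initial-boundary case the first one-sided bound is precisely \eqref{starrr} with $\omega=\omega_j$, and Proposition~\ref{initial.red.osc} yields $\osc_{\frac14 Q}v\le\osc_Q v-\eps_4\omega_j$. In the lateral case we apply \eqref{allinall} with $\omega=\omega_j$, noting that $\eps\le\wt\omega_j/2$ meets \eqref{eq:lat init 1}$_2$ and that when $b\notin[\mu^-,\mu^+]$ the simpler $p$-Laplacian argument from Paragraph~\ref{vartheta.fixed} still produces
\[
\osc_{\frac1{16}Q^1}v\le\osc_{Q^3}v-\eps_1^2\omega_j\exp\bigl(-[\eps_1\omega_j]^{-p'q}\bigr).
\]

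\emph{Scale matching and choice of constants.} I will set $\tau:=\eps_1$ so that the intrinsic scale $\wt\omega_j$ of \eqref{choices_j} coincides with $\wt\omega$ of \eqref{tilde.omega} evaluated at $\omega_j$. Then, writing $\eps_\ast:=\min\{\eps_1^2,\eps_4,\text{interior factor}\}$, I will take $\vartheta\in(0,\eps_\ast)$ small (a small power of $\eps_\ast$ depending on $p$ and $\alpha$) so as to ensure simultaneously the spatial contraction $R_{j+1}\le R_j/16$, which, using $1/\alpha=p'q>2$ and the explicit $R_{j+1}$ in \eqref{choices_j}, amounts to $\vartheta\alpha^{-1}[\vartheta\omega_j]^{-1/\alpha}\ge\log16$ uniformly in $\omega_j\in(0,1]$, and the temporal containment of $Q^{j+1}$ inside the reduced subcylinder of the relevant case, comparing $T_{j+1}=\wt\omega_{j+1}^{1-p}R_{j+1}^p$ to $[\tau\omega_j]^{2-p}R_j^p/16$ (respectively to the initial analogue), where the strong exponential contraction of $R_{j+1}/R_j$ easily absorbs the ratio $\wt\omega_{j+1}^{1-p}/\wt\omega_j^{1-p}$, close to one for $\vartheta$ small. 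To cover the bilateral time interval $(t_0-T_{j+1},t_0+T_{j+1})$ in the interior and lateral cases, I will apply the one-sided reduction at a shifted centre $t^\ast\ge t_0+T_{j+1}$ (or a union of such centres) so that the resulting backward cylinder covers $Q^{j+1}$ while its source cylinder still sits inside $Q^j$. Monotonicity of the function $\eta\mapsto\eta\exp(-[\eta\omega_j]^{-1/\alpha})$ then gives $\vartheta\exp(-[\vartheta\omega_j]^{-1/\alpha})\le\eps_\ast\exp(-[\eps_\ast\omega_j]^{-1/\alpha})$, so the quantitative reduction produces a new oscillation not exceeding $\omega_j-\vartheta\omega_j\exp(-[\vartheta\omega_j]^{-1/\alpha})=\omega_{j+1}$, which is \eqref{claim}.

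\emph{Main obstacle.} The delicate part is the scale matching in the lateral case: reconciling the intrinsic time-scales $T^1,T^2,T^3$ (and $T^4$ from the initial boundary) used across the three sub-alternatives of Section~\ref{sec.three} with the single scale $T_j=\wt\omega_j^{1-p}R_j^p$ of the iteration, while simultaneously arranging $\vartheta\exp(-[\vartheta\omega_j]^{-1/\alpha})$ to underestimate each of the three individual reduction factors. The doubly-exponential form in \eqref{choices_j} is in fact dictated by the most degenerate scenario \eqref{eq:b1 lat}–\eqref{lat alt.1}; summing the resulting recursion will eventually produce the $\log\log$-modulus in Theorem~\ref{boundary.continuity}.
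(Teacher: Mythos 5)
Your skeleton is the paper's: a trichotomy (interior / initial / lateral), a dichotomy on the boundary datum with the fallback $2\osc\tilde g$, the choice $\tau=\eps_1$, a time-recentering at the top of the cylinders, and $\vartheta$ chosen as a small power of $\tau$ so that the exponential contraction of $R_{j+1}/R_j$ absorbs the intrinsic time factors. Two steps, however, are genuinely flawed as written. First, the case split cannot be made ``according to where $Q^j$ meets $\partial_p\Omega_T$''. The lateral machinery of Section~\ref{red.lateral} is stated for cylinders \emph{centred at a point of $\partial\Omega$} and at the working radius $\hat R_j\ll R_j$; so to use \eqref{allinall} you must recentre at some $\wt x\in\partial\Omega$ and you need $B_{R_{j+1}}(x_0)\subset\frac1{16}B_{\hat R_j}(\wt x)$, which forces $|x_0-\wt x|$ to be small compared with $\hat R_j$. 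If $Q^j$ touches $\partial\Omega\times(0,T)$ but $\mathrm{dist}(x_0,\partial\Omega)$ is, say, of order $R_j$, no admissible boundary-centred cylinder contains $Q^{j+1}$, and taking a larger radius destroys the inclusion $Q^3(\wt x)\subset Q^j$ needed to use \eqref{assumption_j} and \eqref{eq:lat init 1}. That regime must be handled by the interior estimate, which is exactly why the paper draws the trichotomy at the \emph{working} scale: Case~1 if $Q_{\rm int}(\wt R_j,\omega_j)\subset\Omega_T$, Case~2 if that small cylinder reaches $\{t=0\}$, Case~3 if $B_{\wt R_j}(x_0)\cap\partial\Omega\neq\emptyset$ (with $\wt R_j\ll\hat R_j\ll R_j$ auxiliary radii). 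Your split leaves this intermediate regime uncovered.

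Second, your dichotomy on $\tilde g$ is taken relative to $\mu^\pm=\sup_{Q^j}v,\inf_{Q^j}v$, but the hypotheses you need are relative to the sup/inf of $v$ over the \emph{smaller} working cylinders: \eqref{starrr} requires $\sup_{B\cap\Omega}v(\cdot,0)\le\sup_{Q_{\rm ini}(\hat R_j,\omega_j)}v-\omega_j/8$, and \eqref{eq:lat init 1}$_1$ requires $\sup_{\overline{Q^3}(\wt x)\cap\partial_p\Omega_T}\tilde g\le\sup_{Q^3(\wt x)}v-\omega_j/8$. Your assertion that the bound $\sup_{\overline{Q^j}\cap\partial_p\Omega_T}\tilde g\le\mu^+-\omega_j/8$ ``is precisely \eqref{starrr}'' reverses an inequality: since $\sup_{Q_{\rm ini}}v\le\sup_{Q^j}v$ (and likewise $\sup_{Q^3(\wt x)}v\le\mu^+$), your condition is strictly weaker than what Lemma~\ref{lemma: log lemma} and \eqref{allinall} need, so the reduction step does not launch. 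The paper avoids this by formulating the alternatives per case over the working cylinder and, when neither holds, first assuming $\osc_{Q^{j+1}}v>\tfrac34\omega_j$ (otherwise \eqref{claim} is immediate) and subtracting the two converse inequalities over that cylinder to get $\osc_{\overline Q^j\cap\partial_p\Omega_T}\tilde g\ge\omega_j/2\ge\tfrac12\osc_{Q^{j+1}}v$. Your version could be patched (e.g.\ distinguishing whether $\sup_{Q^3(\wt x)}v$, resp.\ $\sup_{Q_{\rm ini}}v$, lies within a small multiple of $\wt\omega_j$ of $\mu^+$ or not, the latter sub-case giving the oscillation drop for free), but that argument is absent from the proposal; as it stands, the step from your dichotomy to the hypotheses of the three reduction results fails.
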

\begin{proof}
Fix $j\in\N_0$ as in the statement of the Proposition and suppose that \eqref{assumption_j} holds. Observe that by considering the time $t_j := t_0 + T_j$ instead of $t_0$, we may write both $Q^j$ and $Q^{j+1}$ as backwards in time cylinders: 
\begin{equation*} 
Q^i = \Big(B_{R_i}(x_0) \times (t_i - 2T_i,t_i)\Big)\cap \overline{\Omega}_T
\end{equation*}
for $i=j,j+1$. Notice that it could indeed happen that $t_j,t_{j+1}>T$. In order to have some freedom we choose two auxiliary parameters 
\begin{equation*} 
\wt R_j := \exp\left(-\frac{2\vartheta}{3\alpha}[\vartheta\omega_j]^{-1/\alpha}\right)R_j\qquad {\rm and}\qquad \hat R_j := \exp\left(-\frac{\vartheta}{3\alpha}[\vartheta\omega_j]^{-1/\alpha}\right)R_j.
\end{equation*}
Note that not only do we have $R_{j+1}\leq \wt R_j\leq \hat R_j\leq R_j$, but the ratios
\begin{equation}\label{ratii}
\frac{R_{j+1}}{\wt R_j}=\frac{\wt R_{j}}{\hat R_j}=\frac{\hat R_{j}}{R_j}=\exp\left(-\frac{\vartheta}{3\alpha}[\vartheta\omega_j]^{-1/\alpha}\right)\leq \exp\left(-\frac{\vartheta^{-1}}{3\alpha}\right) 
\end{equation}
can be made as small as we please by choosing $\vartheta$ small enough (note that $\alpha<1/4$). Moreover, we set 
\begin{equation*} 
Q_{\rm int}(r,\omega) := B_{r}(x_0) \times (\bar t- \tilde M \omega^{(2-p)(1+1/\tilde\alpha)} r^p ,\bar t),\quad \bar t:=\max\{t_{j+1},T\};
\end{equation*}
$\tilde\alpha\equiv \tilde\alpha(n,p)$ is the exponent appearing in \cite[Theorem 1.2]{BKU}, relabeled; its explicit value is not important here, only the fact that $\tilde\alpha\in(0,1)$. $\tilde M$ is the constant appearing in \cite[Theorem 1.2]{BKU}, larger than one and depending on $n,p,\Lambda$ and $\tilde\alpha$; note that the dependence on $\tilde\alpha$ is meaningful only in the case $p=n$. We fix, in this case, $\tilde\alpha=1/4$ so that in any case $\tilde M=\tilde M(n,p,\Lambda)$.

\vspace{3mm}

\noindent  \emph{Case 1. Interior estimate.}
Let us first assume that $Q_{\rm int}(\wt R_j,\omega_j)\subset \Omega_T$. Since $\wt R_j\leq R_j$ and 
\begin{align}\label{small.time.scales}
\frac{\tilde M\omega_j^{(2-p)(1+1/\tilde\alpha)}\wt R_j^p}{\wt\omega_j^{1-p}R_j^p}&=\tilde M\tau^{p-1}\omega_j^{1-(p-2)/\tilde\alpha}\exp\Big(-(p-1)[\tau\omega_j]^{-1/\alpha}\Big)\Big(\frac{\wt R_j}{R_j}\Big)^p\notag\\
&\leq \tilde M\tau^{(p-2)(1+1/\tilde\alpha)}\sup_{\varsigma\in(0,1)}\varsigma^{1-(p-2)/\tilde\alpha}\exp\Big(-(p-1)\varsigma^{-1/\alpha}\Big)\notag\\
&=:\tilde M \,\mathcal S(p,q)\tau^{(p-2)(1+1/\tilde\alpha)}\leq 1
\end{align}
for small enough $\tau\equiv \tau(n,p,\Lambda,q)$, we have $Q_{\rm int}(\wt R_j,\omega_j)\subset Q^j$. Using now~\cite[Remark 4.3]{BKU} and the proof of~\cite[Theorem 4.1]{BKU} we see that 
\begin{equation*} 
\osc_{Q_{\rm int}(\frac{1}{32} \wt R_j,\omega_{j})} v \leq \omega_{j+1} \,,
\end{equation*}
and the inclusion $Q^{j+1} \subset Q_{\rm int}(\frac{1}{32}  \wt R_j,\omega_{j})$ follows choosing small enough $\vartheta$ depending on $n,p,\Lambda,q$ and $\tau$. Indeed, first we take $\vartheta$ so that $e^{-\vartheta^{-1}/[3\alpha]}\leq 1/32$ (see \eqref{ratii}). Then we notice that, since $\omega_{j+1}\geq\omega_j/2$,
\begin{align*}
\frac{\wt\omega_{j+1}^{1-p}R_{j+1}^p}{\tilde M\omega_j^{(2-p)(1+1/\tilde\alpha)}(\wt R_j/32)^p}&\leq c(p,\tilde M)\Big(\frac{\tau\omega_j}2\Big)^{1-p}\frac{\exp\big((p-1)\big(\frac{\tau\omega_j}2\big)^{-1/\alpha}\big)}{\omega_j^{(2-p)(1+1/\tilde\alpha)}}\Big(\frac{R_{j+1}}{\wt R_j}\Big)^p\\
&\leq \frac{c}{\tau^{p-1}\omega_j}\exp\bigg(\Big[-\frac{p\vartheta^{1-1/\alpha}}{3\alpha}+(p-1)\big(\frac{\tau}2\big)^{-1/\alpha}\Big]\omega_j^{-1/\alpha}\bigg).
\end{align*}
Now if we choose $\vartheta\leq c(p,\alpha)\tau^{1/(1-\alpha)}$ yielding
\[
\frac{\wt\omega_{j+1}^{1-p}R_{j+1}^p}{\tilde M\omega_j^{(2-p)(1+1/\tilde\alpha)}(\wt R_j/32)^p}\leq\frac{c(n,p,\Lambda,q)}{\vartheta^{p-1}\omega_j}\exp\bigg(\Big[-\frac{p\vartheta^{1-1/\alpha}}{6\alpha}\Big]\omega_j^{-1/\alpha}\bigg),
\]
then this quantity can be made smaller than one by choosing $\vartheta$ further small. Note that when we decrease the value of $\tau$ in what follows, we shall decrease also the value of $\vartheta$ accordingly. 

\smallskip \noindent \emph{Case 2. Initial boundary.}
Suppose that $\overline{Q_{\rm int}(\wt R_j,\omega_j)}$ touches the initial boundary, that is, $t_{j+1}\leq \tilde M \omega_j^{(2-p)(1+1/\tilde\alpha)} \wt R_j^p$. We define
\[
Q_{\rm ini}(\hat R_j,\omega_j):=\big(B_{\hat R_j}(x_0)\cap\Omega\big)\times(0,\omega_j^{2-p}\hat R_j^p)
\]
and we assume that
\begin{equation}\label{bound.nontrivial.1}
\sup_{\overline Q^j\cap\partial_p\Omega_T}\tilde g\leq \sup_{Q_{\rm ini}(\hat R_j,\omega_j)}v-\frac{\omega_j}{8}\quad \Longrightarrow\quad\sup_{B_{\hat R_j}\cap \Omega}v(\cdot,0)\leq \sup_{Q_{\rm ini}(\hat R_j,\omega_j)}  v-\frac{\omega_j}{8} 
\end{equation}
holds. We are thus in a position to apply Lemma~\ref{lemma: log lemma} and to subsequently infer~\eqref{initial.reduction}:
\[
\sup_{(B_{\hat R_j/4}\cap\Omega)\times(0,\omega_j^{2-p}\hat R_j^p)} v \leq \sup_{Q_{\rm ini}(\hat R_j,\omega_j)} v  - \eps_4 \omega_j\leq \sup_{Q^j} v  - \eps_4 \omega_j;
\]
the last inequality holds, since $\hat R_j\leq R_j$ and $\omega_j^{2-p}\leq \wt \omega_j^{1-p}$. Since $R_{j+1}\leq \frac14 \hat R_j$ and 
\[
t_{j+1}\leq \tilde M\omega_j^{(2-p)(1+1/\tilde\alpha)}\wt R_j^p \leq \omega_j^{2-p}\hat R_j^p
\]
for $\vartheta$ small, we also have $Q^{j+1}\subset (B_{\hat R_j/4}(x_0)\cap\Omega)\times(0,\omega_j^{2-p}\hat R_j^p)$. Note indeed that
\begin{align*}
\frac{\tilde M\omega_j^{(2-p)(1+1/\tilde\alpha)}}{\omega_j^{2-p}}\Big(\frac{\wt R_j}{\hat R_j}\Big)^p&\leq \tilde M\omega_j^{(2-p)/\tilde\alpha}\exp\left(-\frac{\vartheta}{3\alpha}[\vartheta\omega_j]^{-1/\alpha}\right)\\
&\leq \tilde M\vartheta^{(p-2)/\tilde\alpha}\sup_{\varsigma>0}\varsigma^{(2-p)/\tilde\alpha}\exp\Big(-\frac{\varsigma^{(\alpha-1)/\alpha}}{3\alpha}\Big)\leq 1
\end{align*}
for small enough $\vartheta$. Thus,
\begin{equation}\label{red.osc.ini}
\sup_{Q^{j+1}} v\leq \sup_{Q^j} v  - \eps_4 \omega_j\quad \Longrightarrow\quad\osc_{Q^{j+1}} v\leq \osc_{Q^j} v  - \eps_4 \omega_j\leq \omega_j(1-\eps_4)\leq \omega_{j+1}, 
\end{equation}
after having subtracted from both sides $\inf_{Q^{j+1}} v$ and taking $\vartheta\leq \eps_4$. The case 
\begin{equation}\label{bound.nontrivial.2}
\inf_{\overline Q^j\cap\partial_p\Omega_T}  \tilde g\geq \inf_{Q_{\rm ini}(\hat R_j,\omega_j)}  v+\frac{\omega_j}{8} 
\end{equation}
can be reduced to the previous one simply observing that $-v$ satisfies an equation structurally similar to \eqref{the equation for v.bnd} with $-\tilde g$ replacing $\tilde g$ as boundary datum; thus also in this case we conclude with \eqref{red.osc.ini}. To conclude, note that we may assume that $\osc_{Q^{j+1}}v> \frac34 \omega_j$, because otherwise 
\[
\osc_{Q^{j+1}}v\leq {\frac34} \omega_j\leq\omega_{j+1}.
\]
Thus, if neither \eqref{bound.nontrivial.1} nor \eqref{bound.nontrivial.2} holds, subtracting the converse inequalities gives
\[
\osc_{\overline Q^j\cap\partial_p\Omega_T}\tilde g\geq\osc_{Q_{\rm ini}(\hat R_j,\omega_j)}v-\frac{\omega_j}{4}\geq\osc_{Q^{j+1}}v-\frac{\omega_j}{4}\geq\frac{\omega_j}{2};
\]
in view of \eqref{assumption_j} this implies $\osc_{Q^{j+1}}v\leq\osc_{Q^j}v\leq 2\osc_{\overline Q^j\cap\partial_p\Omega_T}\tilde g$.

\smallskip \noindent \emph{Case 3. Lateral boundary.} We finally assume that $B_{\wt R_j}(x_0) \cap \partial \Omega \neq \emptyset$. The idea is to use the results of Section~\ref{red.lateral} with $\omega=\omega_j$, $r=\hat R_j$ and $\eps_1=\tau$, which yield 
\[
\wt\omega=\wt\omega_j,\qquad T^1=(\tau\omega_j)^{2-p}\hat R_j^p, \qquad T^3=\wt\omega_j^{1-p}\hat R_j^p.
\]
Since $x_0$ is close to the boundary, we find $\wt x\in\partial\Omega$ such that $|x_0-\wt x|\leq \wt R_j$, and thus for a small $\vartheta$ we have $B_{R_{j+1}}(x_0)\subset \frac1{16}B_{\hat R_j}(\wt x)$ using \eqref{ratii}. Moreover, we estimate 
\[
\begin{split}
\frac{2T_{j+1}}{\frac1{16}T^1}&=32\,\Big(\frac{\omega_j}{\omega_{j+1}}\Big)^{p-2}\frac1{\tau\omega_{j+1}}\exp\left((p-1)[\tau\omega_{j+1}]^{-1/\alpha}\right)\bigg(\frac{R_{j+1}}{\hat R_j}\bigg)^p\\
&\leq 2^{p+3}\exp\bigg(\bigg(\Big(\frac{2}{\tau}\Big)^{1/\alpha}-\frac{2}{3\alpha}\vartheta^{(\alpha-1)/\alpha}\bigg)p\,\omega_j^{-1/\alpha}\bigg)\leq 1
\end{split}
\]
for small enough $\vartheta$ using $\omega_j\leq 2\omega_{j+1}$ (implied by \eqref{omega_j}) and $1/x\leq\exp(x^{-1/\alpha})$ for $x>0$; recall that $\alpha\in (0,1)$. Therefore $Q^{j+1}\subset\frac{1}{16}Q^1(\wt x)$, where 
\[
Q^1(\wt x) := \left(B_{\hat R_j}(\wt x)\times\left(t_{j+1}-T^1,t_{j+1}\right)\right)\cap{\Omega}_T;
\]
moreover, we clearly have $Q^3(\wt x)\subset Q^j$ for small $\vartheta$, if we set 
\[
Q^3(\wt x):=\left(B_{\hat R_j}(\wt x) \times\left(t_{j+1}-T^3,t_{j+1}\right) \right) \cap{\Omega}_T.
\]
Now we assume that
\[
\sup_{\overline{Q^3}(\wt x)\cap\partial_p\Omega_T}\tilde g\leq \sup_{Q^3(\wt x)}v-\frac{\omega_j}{8}.
\]
Possibly reducing the value of $\vartheta$ and noting that the map $\sigma\mapsto \exp\left(-\sigma^{-1/\alpha}\right)$ is increasing, \eqref{allinall} gives 
\begin{equation}\label{oscreduction}
\osc_{\frac1{16}Q^1(\wt x)} v \leq \osc_{Q^3(\wt x)} v -\vartheta\omega_j\exp\left(-[\vartheta\omega_j]^{-1/\alpha}\right)\,;
\end{equation}
note that we are assuming $\eps\leq\wt\omega_j/2$. Using \eqref{assumption_j} and \eqref{omega_j}, we can bound the right-hand side of \eqref{oscreduction} by $\omega_{j+1}$, which gives the result. The case
\[
\inf_{\overline{Q^3}(\wt x)\cap\partial_p\Omega_T}\tilde g\geq \inf_{Q^3(\wt x)}v+\frac{\omega_j}{8}
\]
is handled similarly; see Remark \ref{vartheta.fixed.2}. In the remaining case we have, similarly to {\em Case 2}, that either 
\[
\osc_{Q^{j+1}}v\leq {\frac34} \omega_j\leq\omega_{j+1}
\]
or 
\[
\osc_{Q^{j+1}}v\leq \osc_{Q^j}v\leq\omega_j\leq 2\osc_{\overline{Q^3}(\wt x)\cap\partial_p\Omega_T}\tilde g.
\]
This concludes the proof of \eqref{claim}; $\vartheta$ and $\tau$ are now fixed as constants depending only on $n,p,\Lambda,\delta$ and $q$.
\end{proof}

\begin{claim}\label{claim.Tuomo}
Once fixed $R_0>0$ and $\alpha$ as in \eqref{beta}, with $\omega(\cdot)$ defined in \eqref{choice.modulus.boundary}, $\vartheta$ fixed in Proposition \ref{interior.continuity.iterated}, $\lambda_0:=\exp(\exp(\vartheta^{-1/\alpha}))$ and $R_j$ defined in \eqref{choices_j}, the sequence $\{\omega_j\}_{j\in\N_0}$ with the choice $\omega_j:=\omega(R_j)$ satisfies \eqref{omega_j}; that is
\[
\omega(R_0)=1,\qquad \omega(R_{j+1})\geq\omega(R_j)\Big(1-\vartheta \exp\big(-[\vartheta\omega(R_j)]^{-1/\alpha}\big)\Big).
\]
Moreover,
\begin{equation}\label{doubling}
\omega(R_j)\leq 2\omega(R_{j+1}). 
\end{equation}
\end{claim}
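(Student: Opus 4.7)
The plan is to reduce everything to a one-variable recursion on $L_j := \log\log(\lambda_0 R_0/R_j)$, which is well defined and positive because $\log\lambda_0 = \exp(\vartheta^{-1/\alpha}) > 1$, and in fact $L_0 = \vartheta^{-1/\alpha}$. With this notation the defining formula of $\omega$ gives the clean identity $\omega(R_j) = 1/(\vartheta L_j^\alpha)$, so that $\vartheta\omega(R_j) = L_j^{-\alpha}$ and $[\vartheta\omega(R_j)]^{-1/\alpha} = L_j$. Plugging $j=0$ and using $L_0 = \vartheta^{-1/\alpha}$ immediately yields $\omega(R_0)=1$.

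Next I would translate the recursion \eqref{choices_j} for $R_j$ into a recursion for $L_j$. Taking logarithms twice in $R_{j+1} = \exp(-(\vartheta/\alpha) L_j) R_j$ gives
\[
L_{j+1} = \log\!\Big(e^{L_j} + \tfrac{\vartheta}{\alpha} L_j\Big) = L_j + \log\!\Big(1 + \tfrac{\vartheta}{\alpha} L_j e^{-L_j}\Big).
\]
Applying the elementary inequality $\log(1+x)\leq x$ at once yields the key upper bound $L_{j+1} \leq L_j\bigl(1 + \tfrac{\vartheta}{\alpha} e^{-L_j}\bigr)$.

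To obtain the desired recurrence \eqref{omega_j}, I would rewrite what has to be proved as $L_{j+1}^\alpha \leq L_j^\alpha / (1 - \vartheta e^{-L_j})$, i.e.\ $L_{j+1} \leq L_j(1 - \vartheta e^{-L_j})^{-1/\alpha}$. Since $\alpha \in (0,1)$, Bernoulli's (or convexity) inequality $(1-x)^{-1/\alpha} \geq 1 + x/\alpha$, valid for $x\in[0,1)$, applied with $x = \vartheta e^{-L_j}$, combined with the upper bound from the previous step, closes this case. One only needs $\vartheta e^{-L_j} < 1$, which is guaranteed by $L_j \geq L_0 = \vartheta^{-1/\alpha}$ once $\vartheta\in(0,1/2)$ as fixed in Proposition~\ref{interior.continuity.iterated}.

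Finally, \eqref{doubling} amounts to $L_{j+1} \leq 2^{1/\alpha} L_j$; from $L_{j+1} \leq L_j(1 + (\vartheta/\alpha) e^{-L_j})$ and $L_j \geq \vartheta^{-1/\alpha}$ this is immediate, since $(\vartheta/\alpha) e^{-L_j}$ is exponentially small and certainly bounded by $2^{1/\alpha}-1$ (possibly after a harmless further reduction of $\vartheta$, already envisaged in Proposition~\ref{interior.continuity.iterated}). There is no real obstacle; the only point to watch is making sure the algebra of $\lambda_0$ gives $L_0 = \vartheta^{-1/\alpha}$ exactly, so that $\omega(R_0)=1$ and all subsequent $L_j$ are bounded below by a quantity large enough for the linearizations above to be valid.
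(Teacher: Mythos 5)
Your proof is correct, but it takes a genuinely different (and more discrete) route than the paper. You transform everything through the substitution $L_j=\log\log(\lambda_0R_0/R_j)$, for which the definitions give the exact identities $[\vartheta\omega(R_j)]^{-1/\alpha}=L_j$ and $L_0=\vartheta^{-1/\alpha}$ (whence $\omega(R_0)=1$), so that the radius recursion \eqref{choices_j} becomes the explicit scalar recursion $L_{j+1}=L_j+\log\bigl(1+\tfrac{\vartheta}{\alpha}L_je^{-L_j}\bigr)$, and \eqref{omega_j} reduces to $L_{j+1}\le L_j(1-\vartheta e^{-L_j})^{-1/\alpha}$, which you close with the elementary bounds $\log(1+x)\le x$ and $(1-x)^{-1/\alpha}\ge 1+x/\alpha$. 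The paper instead works directly with $\omega$: it uses $1-x\le e^{-x}$, replaces the point value $\exp(-[\vartheta\omega(R_j)]^{-1/\alpha})$ by its average over $(R_{j+1},R_j)$ with respect to $d\rho/\rho$ (by monotonicity), and then computes the resulting integral exactly, recognizing it as $\log(\omega(R_{j+1})/\omega(R_j))$; that continuous comparison explains how the explicit form \eqref{choice.modulus.boundary} of $\omega$ is calibrated to the recursion, whereas your computation is shorter and purely algebraic. Both hinge on the same identity $[\vartheta\omega(\rho)]^{-1/\alpha}=\log\log(\lambda_0R_0/\rho)$.

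Two minor points. First, for \eqref{doubling} you should not invoke ``a further reduction of $\vartheta$'': $\vartheta$ is already fixed in Proposition \ref{interior.continuity.iterated} and the Claim must hold for that value. Fortunately no reduction is needed: either argue as the paper does, deducing \eqref{doubling} directly from the inequality just proved together with $\vartheta\le\tfrac12$, since $\omega(R_{j+1})\ge\omega(R_j)(1-\vartheta)\ge\tfrac12\omega(R_j)$; or, within your scheme, note that $L_j\ge L_0=\vartheta^{-1/\alpha}\ge 2^{1/\alpha}\ge \tfrac1\alpha$ gives $\tfrac{\vartheta}{\alpha}e^{-L_j}\le\tfrac{1}{2\alpha}e^{-1/\alpha}\le\tfrac{1}{2e}<1\le 2^{1/\alpha}-1$, so $L_{j+1}\le 2^{1/\alpha}L_j$ automatically. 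Second, the admissibility condition $\vartheta e^{-L_j}<1$ needs only $\vartheta<1$ and $L_j>0$ (and $L_j$ is increasing in $j$), so no extra smallness enters there either.
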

\begin{proof}
First, we obviously have $\omega(R_0)=1$ by the choice of $\lambda_0$. For any fixed $j\in \N_0$, using the elementary inequality $1-x\leq e^{-x}$ that is valid for any $x$, we have
\[
1-\vartheta \exp\big(-[\vartheta\omega(R_j)]^{-1/\alpha}\big) \leq \exp\Big(-\vartheta \exp\big(-[\vartheta\omega(R_j)]^{-1/\alpha}\big)\Big).
\]
Now we estimate the argument of the exponential in the following way:
\[
-\vartheta \exp\big(-[\vartheta\omega(R_j)]^{-1/\alpha}\big)\leq -\frac{\vartheta}{\int_{R_{j+1}}^{R_j}\frac{d\rho}{\rho}}\int_{R_{j+1}}^{R_j}\exp\big(-[\vartheta\omega(\rho)]^{-1/\alpha}\big)\frac{d\rho}{\rho}
\]
since the map $\rho\mapsto - \exp(-[\vartheta\omega(\rho)]^{-1/\alpha})$ is decreasing. We compute, using the expression in \eqref{choices_j} for $R_{j+1}/R_j$,
\[
\int_{R_{j+1}}^{R_j}\frac{d\rho}{\rho}=-\log\Big(\frac{R_{j+1}}{R_j}\Big)=\frac{\vartheta}{\alpha}[\vartheta\omega(R_j)]^{-1/\alpha}
\]
and using \eqref{choice.modulus.boundary} for the explicit expression of $\omega(\cdot)$
\[
\exp\big(-[\vartheta\omega(\rho)]^{-1/\alpha}\big)=\exp\bigg(-\log\Big(\log\Big(\frac{\lambda_0 R_0}\rho\Big)\Big)\bigg)=\frac{1}{\log(\lambda_0 R_0/\rho)}.
\]
Thus, merging the estimates above and using again the aforementioned monotonicity and the expression for $\omega(\cdot)$, we have
\begin{align*}
1-\vartheta \exp\big(\!\!-[\vartheta\omega(R_j)]^{-1/\alpha}\big)&\leq \exp\bigg(-\alpha[\vartheta\omega(R_j)]^{1/\alpha}\int_{R_{j+1}}^{R_j}\frac{1}{\log(\lambda_0 R_0/\rho)}\frac{d\rho}{\rho}\bigg)\\
&\leq\exp\bigg(\!\!-\alpha\int_{R_{j+1}}^{R_j}[\vartheta\omega(\rho)]^{1/\alpha}\frac{1}{\log(\lambda_0 R_0/\rho)}\frac{d\rho}{\rho}\bigg)\\
&=\exp\bigg(\!\!-\alpha\int_{R_{j+1}}^{R_j}\frac{1}{\log(\log(\lambda_0 R_0/\rho))}\frac{1}{\log(\lambda_0 R_0/\rho)}\frac{d\rho}{\rho}\bigg).
\end{align*}
We conclude by simply computing
\[
-\alpha\int_{R_{j+1}}^{R_j}\frac{1}{\log(\log(\lambda_0 R_0/\rho))}\frac{1}{\log(\lambda_0 R_0/\rho)}\frac{d\rho}{\rho}=\log\left(\bigg[\frac{\log(\log(\lambda_0 R_0/R_{j}))}{\log(\log(\lambda_0 R_0/R_{j+1}))}\bigg]^\alpha\right)
\]
and since the last quantity is equal to $\log(\omega(R_{j+1})/\omega(R_j))$, the first part of the Claim is proved. For the doubling property \eqref{doubling}, it is enough to recall that $\vartheta, \omega(R_j)\leq 1$.
\end{proof}

\subsection{Uniform modulus of continuity}
We finally prove that our approximate solution $v$ is almost equi-continuous. In order to fix a normalization condition, we assume that
\begin{equation}\label{normalization}
\osc_{\Omega_T} v\leq 1
\end{equation}
holds true.
\begin{proposition} \label{p.equicont}
Suppose that $v$ is a weak solution to \eqref{the equation for v.bnd} in $\Omega_T$ attaining continuously the boundary values $\tilde g \in C^0(\partial_p \Omega_T)$ on $\partial_p \Omega_T$, and suppose that \eqref{normalization} holds true. There is a modulus of continuity $\bar \omega$ independent of $\eps$ and $(x_0,t_0) \in \overline{\Omega}_T$ such that 
\begin{equation} \label{e.explicit.u_eps}
\osc_{Q_r(x_0,t_0)  \cap \overline{\Omega}_T} v \leq \bar \omega(r) + h(\eps)
\end{equation}
for every $r>0$, where $h(\eps) \to 0$ as $\eps \to 0$. 
\end{proposition}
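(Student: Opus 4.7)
Fix $(x_0, t_0) \in \overline{\Omega}_T$ and a radius $R_0 \leq r_\Omega$ independent of $\eps$ and the base point, and pick some $\alpha \in (0, 1/(p'\bar q))$. Let $\vartheta, \tau, \lambda_0$ be the constants from Proposition \ref{interior.continuity.iterated} and Claim \ref{claim.Tuomo}, let $\omega(r)$ be the modulus in \eqref{choice.modulus.boundary}, and consider the associated sequences $\omega_j := \omega(R_j)$, $\wt\omega_j$, $R_j$ and cylinders $Q^j$. By Claim \ref{claim.Tuomo} the sequence $\omega_j$ obeys the recursion \eqref{omega_j} with $\omega_0 = 1$, and by \eqref{normalization} we have $\osc_{Q^0} v \leq \omega_0$. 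The plan is to iterate Proposition \ref{interior.continuity.iterated} for as long as its nondegeneracy condition $\eps \leq \wt\omega_j/2$ holds: the range of scales where the iteration is valid will produce the modulus $\bar\omega$, while the leftover small scales will contribute the error term $h(\eps)$.

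Since $s \mapsto \tau s\exp(-(\tau s)^{-1/\alpha})$ is increasing and $\omega_j \to 0$, there is a largest $j_\eps$ with $\wt\omega_{j_\eps} \geq 2\eps$; inverting this relation shows $\omega_{j_\eps+1} \leq c\,[\log(1/\eps)]^{-\alpha}$, which tends to zero as $\eps \to 0$. To absorb the boundary datum contribution that appears on the right-hand side of \eqref{claim}, I would replace $\omega_j$ by the enlarged sequence
\[
\hat\omega_j := \omega_j + 2\,\omega_{\tilde g}(c R_{j-1}),\qquad \hat\omega_0 := 1,
\]
where $\omega_{\tilde g}$ is a concave modulus of continuity for $\tilde g$ on the compact set $\partial_p \Omega_T$. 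A short direct check, using monotonicity of $\omega_{\tilde g}$ and the super-exponential decay of $R_j$, shows that $\hat\omega_j$ still obeys \eqref{omega_j} after a harmless decrease of $\vartheta$. Applying Proposition \ref{interior.continuity.iterated} inductively for $j = 0, \dots, j_\eps$ then yields $\osc_{Q^j} v \leq \hat\omega_j$ for every such $j$.

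For arbitrary $r \in (0, R_0]$, the standard parabolic cylinder $Q_r(x_0,t_0) \cap \overline{\Omega}_T$ is contained in $Q^j$ whenever $r \leq R_j$, since the time half-height $\wt\omega_j^{1-p} R_j^p$ of $Q^j$ comfortably dominates $r^p$ (recall $\wt\omega_j \leq 1$ and $p \geq 2$). Thus, if $r \geq R_{j_\eps+1}$ and we pick $j \leq j_\eps$ with $R_{j+1} \leq r \leq R_j$, the doubling property \eqref{doubling} combined with the monotonicity of $\omega_{\tilde g}$ give
\[
\osc_{Q_r(x_0,t_0) \cap \overline{\Omega}_T} v \leq \hat\omega_j \leq c\bigl(\omega(r) + \omega_{\tilde g}(c R_{j-1}(r))\bigr) =: \bar\omega(r),
\]
where $R_{j-1}(r) \to 0$ as $r \to 0$, so $\bar\omega$ is a genuine modulus of continuity independent of $\eps$ and of $(x_0,t_0)$. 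For $r < R_{j_\eps+1}$, monotonicity of the oscillation in the cylinder gives
\[
\osc_{Q_r(x_0,t_0) \cap \overline{\Omega}_T} v \leq \hat\omega_{j_\eps + 1} \leq c\bigl([\log(1/\eps)]^{-\alpha} + \omega_{\tilde g}(c R_{j_\eps})\bigr) =: h(\eps),
\]
which tends to zero as $\eps \to 0$ because $R_{j_\eps} \to 0$. Adding the two bounds yields \eqref{e.explicit.u_eps} uniformly in the base point.

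The main obstacle I foresee is the bookkeeping around the augmented sequence: verifying that $\hat\omega_j$ still satisfies the delicate recursion \eqref{omega_j} after correction by $\omega_{\tilde g}$, and checking the geometric inclusion $Q_r(x_0,t_0) \cap \overline{\Omega}_T \subset Q^j$ uniformly near both the initial and lateral portions of $\partial_p \Omega_T$. Once these points are handled, the remainder is a routine diagonal iteration of the oscillation reduction lemma, together with the explicit inversion of the map $\omega \mapsto \wt\omega$ which converts the stopping criterion $\wt\omega_{j_\eps} \sim \eps$ into the quantitative expression for $h(\eps)$.
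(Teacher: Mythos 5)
Your overall scaffolding (iterate Proposition \ref{interior.continuity.iterated} while $\eps\leq\wt\omega_j/2$, let the surviving scales produce $\bar\omega$ and the stopped scales produce $h(\eps)$) matches the paper, but the device you propose for absorbing the boundary-datum term in \eqref{claim} — replacing $\omega_j$ by $\hat\omega_j:=\omega_j+2\,\omega_{\tilde g}(cR_{j-1})$ with a \emph{fixed} starting radius $R_0$ — has a genuine gap, and it is exactly the point you dismiss as bookkeeping. First, the augmented sequence does not satisfy the recursion \eqref{omega_j} in general: \eqref{omega_j} only tolerates a relative decrement of size $\vartheta\exp(-[\vartheta\hat\omega_j]^{-1/\alpha})$ per step, which is exponentially small in $\hat\omega_j^{-1/\alpha}$, whereas the added term drops by $2[\omega_{\tilde g}(cR_{j-1})-\omega_{\tilde g}(cR_j)]$ per step, and since $R_j/R_{j-1}=\exp(-\frac{\vartheta}{\alpha}[\vartheta\omega_{j-1}]^{-1/\alpha})$ is itself exponentially small, this loss is typically a fixed (for H\"older data, essentially the whole) fraction of $\omega_{\tilde g}(cR_{j-1})$. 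When $\omega_{\tilde g}(cR_{j-1})$ is comparable to $\omega_j$ — which happens at moderate scales for perfectly reasonable data — the required inequality $\hat\omega_j-\hat\omega_{j+1}\leq\vartheta\hat\omega_j\exp(-[\vartheta\hat\omega_j]^{-1/\alpha})$ fails no matter how you shrink $\vartheta$, because shrinking $\vartheta$ shrinks the right-hand side too. Second, even granting the recursion, Proposition \ref{interior.continuity.iterated} is an intrinsic-scaling statement: the cylinders must be built through \eqref{choices_j} from the \emph{same} sequence that bounds the oscillation. Feeding the bound $\osc_{Q^j}v\leq\hat\omega_j$ into cylinders built from $\omega_j$ does not match its hypotheses, and rebuilding the geometry from $\hat\omega_j$ makes your definition of $\hat\omega_j$ (which uses $R_{j-1}$) circular. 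A further, smaller point: the datum oscillation over $\overline Q^j\cap\partial_p\Omega_T$ must be measured through the parabolic size $(\wt\omega_j^{1-p}R_j^p)^{1/p}=\wt\omega_j^{(1-p)/p}R_j$, not the spatial radius, so the argument $cR_{j-1}$ you insert into $\omega_{\tilde g}$ needs the (true, but unproved in your text) comparison $\wt\omega_j^{(1-p)/p}R_j\leq cR_{j-1}$.

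The paper circumvents all of this by a different device: for a target radius $r$ it starts the cascade at the $r$-dependent scale $R_0=\sqrt r$ (so $\omega(\cdot)$ in \eqref{choice.modulus.boundary} is taken with this $R_0$), observes that every $Q^j$ is contained in a standard parabolic cylinder of radius $c_0\sqrt r$ with $c_0=\wt\omega_0^{(1-p)/p}$ (since $\wt\omega_j^{1-p}R_j^p\leq\wt\omega_0^{1-p}R_0^p=(c_0\sqrt r)^p$), hence $\osc_{\overline Q^j\cap\partial_p\Omega_T}\tilde g\leq\omega_{\tilde g}(c_0\sqrt r)$ uniformly in $j$, and then runs a dichotomy: either $\osc_{Q_r\cap\overline\Omega_T}v\leq 4\,\omega_{\tilde g}(c_0\sqrt r)$, in which case there is nothing to prove since this quantity is part of $\bar\omega(r):=4c_0(\omega_1(\sqrt r)+\omega_{\tilde g}(\sqrt r))$, or else $\omega_j\geq\osc_{Q^j}v>4\,\omega_{\tilde g}(c_0\sqrt r)$ at every step of the induction, so that by the doubling property \eqref{doubling} one gets $2\osc_{\overline Q^j\cap\partial_p\Omega_T}\tilde g\leq\tfrac12\omega_j\leq\omega_{j+1}$ and the maximum in \eqref{claim} is always $\omega_{j+1}$ — the clean recursion then runs from scale $\sqrt r$ down to $r$ (index $k$ with $R_{k+1}\leq r<R_k$) or down to the stopping index $\bar\jmath$ dictated by $\eps\leq\wt\omega_j/2$, which yields exactly your $h(\eps)$. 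So the missing idea is not a refinement of your augmented sequence but the $\sqrt r$ starting scale together with the dichotomy on the size of $\osc_{Q_r}v$ relative to $\omega_{\tilde g}(c_0\sqrt r)$; I recommend you restructure the proof along those lines (and add the trivial case $r>r_\Omega^2$, handled directly by \eqref{normalization}).
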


\begin{proof}
Fix $(x_0,t_0) \in \overline{\Omega}_T$. We first examine the case $r \leq r_\Omega^2$ and look for a modulus of continuity $\bar \omega(\cdot)$ of the form
\begin{equation}\label{e.explicit.u_eps2}
\bar \omega(r) := 4c_0\big(\omega_1(\sqrt r) + \omega_{\tilde g}(\sqrt r) \big)\,,
\end{equation}
where $c_0>1$ is to be chosen, $\omega_1(\cdot)$ is $\omega(\cdot)$ defined in~\eqref{choice.modulus.boundary} with $R_0=1$, and $\omega_{\tilde g}(\cdot)$ is a modulus of continuity for $\tilde g$, as described in \eqref{mod.g}.  The error $h(\cdot)$ is defined as the unique solution to
\begin{equation*} 
\tau h(\eps) \exp\left(-[\tau h(\eps)]^{-1/\alpha}\right) = 2\eps\,,
\end{equation*}
and it is easy to see that $h(\eps) \to 0$ as $\eps \to 0$. 

\vs

According to Proposition~\ref{interior.continuity.iterated}, we define inductively
\begin{align}\label{def.iter}
&R_0 := \sqrt r,\qquad \omega_{j}:= \omega(R_j),\qquad  \wt\omega_j:=\tau\omega_j\exp\left(-[\tau\omega_j]^{-1/\alpha}\right),\notag\\
&R_{j+1}:=\exp\left(-\frac{\vartheta}{\alpha}[\vartheta\omega_j]^{-1/\alpha}\right)R_j,\\
&Q^j:=\left(B_{R_j}(x_0)\times(t_0-\wt\omega_j^{1-p}R_j^p,t_0+\wt\omega_j^{1-p}R_j^p)\right)\cap\overline\Omega_T.\notag
\end{align}
Here $\omega(\cdot)$ is defined as in \eqref{choice.modulus.boundary} with the expression of $R_0$ above. We then fix $\bar \jmath$ as the largest index $j$ for which $\eps\leq \wt\omega_j/2$; then $\omega_{j}<h(\eps)$ for all $j\geq \bar \jmath+1$.  Moreover we let $k \in \N_0$ be such that $R_{k+1} \leq r < R_k$ and denote $Q:=Q_r(x_0,t_0)\cap\overline\Omega_T$.
Finally, note that if $\osc_{Q} v \leq 4 \omega_{\tilde g}(c_0\sqrt r)$, then there is nothing to prove, by the concavity of $\omega_{\tilde g}$. Thus we may assume that $\osc_{Q} v > 4  \omega_{\tilde g}(c_0\sqrt r)$ in the rest of the proof. 

\vspace{3mm}

 We proceed inductively, showing that 
\begin{equation}\label{e.ind.k}
\osc_{Q^j} v \leq \omega_j
\end{equation}
for all $j\in \{0,\dots, \min\{\bar\jmath+1,k+1\}\}$. Note that \eqref{e.ind.k} is certainly true for $j=0$ in view of  \eqref{normalization}, since $\omega_0 = 1$. Assume that \eqref{e.ind.k} holds for some $j\in \{0,\dots, \min\{\bar\jmath,k\}\}$. Since
 \begin{equation*} 
\omega_j \geq \osc_{Q^j} v > \osc_{Q} v > 4 \omega_{\tilde g}(c_0\sqrt r)
\end{equation*}
by the fact that $R_j\geq R_k\geq r$, we see, by the doubling property $ \omega_{j} \leq 2\omega_{j+1}$ given by Claim~\ref{claim.Tuomo}, that
\begin{equation}\label{e.cond.on.tildeg}
2 \osc_{\overline Q^j \cap \partial_p \Omega_T} \tilde g \leq 2\omega_{\tilde g}(c_0\sqrt r) \leq \frac12 \omega_{j} \leq \omega_{j+1}\,.
\end{equation}
The first inequality follows by choosing $c_0=\wt\omega_0^{(1-p)/p}$, $\wt\omega_0\equiv \wt\omega_0(n,p,\Lambda,q)$, since clearly 
\[
\wt\omega_j^{1-p}R_j^p\leq \wt\omega_0^{1-p}R_0^p=\wt\omega_0^{1-p}{(\sqrt r)}^p={(c_0\sqrt r)}^p.
\]
Thus, since $j\leq \bar\jmath$ and therefore $\wt\omega_j\geq 2\eps$, by Proposition~\ref{interior.continuity.iterated} and Claim~\ref{claim.Tuomo} (note that $R_0\leq r_\Omega$), we obtain
\begin{equation*} 
\osc_{Q^{j+1}} v \leq \max\big\{\omega_{j+1},2 \osc_{\overline Q^j \cap \partial_p \Omega_T} \tilde g\big\}=\omega_{j+1},
\end{equation*}
proving the induction step. 

\vs

Now, if we have $\bar\jmath\geq k$, then \eqref{e.ind.k} holds in particular for $j=k$. If on the other hand $\bar\jmath< k$ we use \eqref{e.ind.k} with $j=\bar \jmath+1$; in this case,  we have 
\[
\osc_{Q^k} v \leq \osc_{Q^{\bar\jmath+1}} v\leq\omega_{\bar\jmath+1}<h(\eps).
\]
As a consequence, merging the two cases, we get
\begin{equation}\label{e.iiiii}
\osc_{Q_r(x_0,t_0)\cap\Omega_T} v \leq \osc_{Q^k} v \leq \omega_k + h(\eps) \leq 2 \omega(R_{k+1})+ h(\eps) \leq 2\omega(r)+ h(\eps)
\end{equation}
and this essentially finishes the proof, since by the definition of $R_0$ in $\omega(\cdot)$, $\omega(r)=\omega_1(\sqrt r)$. On the other hand, if $r >r_\Omega^2$, then by \eqref{normalization}
\[
\osc_{Q_r(x_0,t_0)  \cap \overline{\Omega}_T} v \leq \frac{r}{r_\Omega^2}\leq c\,\omega_1(\sqrt r).
\]
\end{proof}

We can also give a quantified version of the previous result. Set, for $r>0$
\begin{align*}
&\wt\omega(r):=\tau \omega(r)\exp\left(-[\tau \omega(r)]^{-1/\alpha}\right), \\
&\wt Q_r^{\omega(\cdot)}(x_0,t_0):=\left( B_{r}(x_0) \times(t_0-\wt\omega(r)^{1-p} r^p,t_0+\wt\omega(r)^{1-p} r^p) \right) \cap \overline{\Omega}_T.
\end{align*}

\begin{proposition} \label{p.equicont2}
Suppose that $v$ is a weak solution $v$ to \eqref{the equation for v.bnd} as in Proposition \ref{p.equicont}, $R_0>0$ and moreover that the boundary value function $\tilde g$ has the ``intrinsic'' modulus of continuity
\begin{equation} \label{e.g_quantitative}
\osc_{\wt Q_{r}^{\omega(\cdot)}(x_0,t_0)} \tilde g \leq \tilde M \omega(r)\,,
\end{equation}
for any $r\leq R_0$, with some $\tilde M>0$, where $(x_0,t_0) \in \overline{\Omega}_T$ and $\omega(\cdot)$ has been defined in~\eqref{choice.modulus.boundary}.
Then
\begin{equation}\label{e.llll}
\osc_{\widetilde Q_r^{\omega(\cdot)}(x_0,t_0)} v \leq c(p,\tilde M)\Big[ \omega(r) + h\Big(\frac\eps{4\tilde M+1}\Big) \Big]\,,
\end{equation}
for any $r>0$ with $h(\eps) \to 0$ as $\eps \to 0$. 
\end{proposition}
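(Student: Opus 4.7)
The plan is to adapt the argument of Proposition \ref{p.equicont}, but initialize the iteration of Proposition \ref{interior.continuity.iterated} directly at the prescribed $R_0$ with the choice $\omega_j := \omega(R_j)$, which by Claim \ref{claim.Tuomo} satisfies the recurrence \eqref{omega_j} together with the doubling $\omega_j \leq 2\omega_{j+1}$. The key observation is that the iterative cylinders $Q^j$ of \eqref{def.iter} coincide with $\widetilde Q_{R_j}^{\omega(\cdot)}(x_0,t_0)$, so that the quantitative hypothesis \eqref{e.g_quantitative} controls the boundary-oscillation term appearing in \eqref{claim} by $\tilde M \omega_j$ directly, eliminating the need for the scale-switching trick (radius $\sqrt{r}$) used in the proof of Proposition \ref{p.equicont}.

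To absorb the possibly large constant $\tilde M$, I would first rescale as in Paragraph \ref{rescaling}: set $\lambda := 4\tilde M+1$ and replace $v$ by $\hat v(y,\tau):=v(y,t_0+\lambda^{2-p}(\tau-t_0))/\lambda$. Then $\hat v$ satisfies an equation of the same structural form with mollification parameter $\hat\eps := \eps/\lambda$ and boundary datum $\hat g := \tilde g/\lambda$, and \eqref{e.g_quantitative} improves to $\osc \hat g \leq \omega(r)/4$ on the images of the intrinsic cylinders under the time change, while \eqref{normalization} is preserved. The induction then mirrors that of Proposition \ref{p.equicont}: letting $\bar\jmath$ be the largest index with $\hat\eps \leq \widetilde\omega_j/2$ (so that $\omega_{\bar\jmath+1} < h(\eps/\lambda)$) and $k$ be such that $R_{k+1} \leq r < R_k$, one proves $\osc_{Q^j}\hat v \leq \omega_j$ for $j \leq \min\{\bar\jmath+1,k+1\}$ by induction. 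The base case follows from $\osc \hat v \leq 1/\lambda \leq 1 = \omega_0$; the inductive step combines Proposition \ref{interior.continuity.iterated} with the decisive bound
\[
2\osc_{\overline{Q^j}\cap\partial_p\Omega_T}\hat g \leq \frac{\omega_j}{2} \leq \omega_{j+1},
\]
where the second inequality is exactly the doubling property from Claim \ref{claim.Tuomo}.

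Merging the two cases $\bar\jmath \geq k$ and $\bar\jmath < k$ as in \eqref{e.iiiii} produces $\osc \hat v \leq 2\omega(r) + h(\eps/\lambda)$ on the appropriate $\tau$-cylinder, and scaling back by $\lambda$ delivers \eqref{e.llll} with $c(p,\tilde M)$ proportional to $\lambda = 4\tilde M+1$. The main technical point requiring care is the bookkeeping of the time rescaling from Paragraph \ref{rescaling}: the $\tau$-cylinders for $\hat v$ produced by the iteration correspond, under the time change $\tau - t_0 = \lambda^{p-2}(t-t_0)$, to $t$-cylinders for $v$ that are $\lambda^{p-2}$-fold shorter in time, so that covering the target $\widetilde Q_r^{\omega(\cdot)}(x_0,t_0)$ requires running the iteration for $\hat v$ at a slightly larger initial radius (by a factor depending only on $\lambda$ and $p$) and exploiting the slow variation of $\widetilde\omega$; this modification is absorbable in $c(p,\tilde M)$. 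The peripheral case of $r$ outside the range where \eqref{choice.modulus.boundary} applies is handled trivially via \eqref{normalization}, extending $\omega$ to be identically $1$ for $r \geq R_0$.
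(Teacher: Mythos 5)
Your proposal follows essentially the same route as the paper's proof: rescale by $\lambda=4\tilde M+1$ as in Paragraph \ref{rescaling} so that \eqref{e.g_quantitative} yields $\osc \hat g\leq \tfrac14\omega(r)$ and hence $2\osc_{\overline Q^j\cap\partial_p\Omega_T}\hat g\leq\omega_{j+1}$ automatically, then rerun the iteration of Proposition \ref{interior.continuity.iterated} with $\omega_j=\omega(R_j)$ directly at $R_0$ (no $\sqrt r$ device), merge the cases $\bar\jmath\gtrless k$ as in \eqref{e.iiiii}, and scale back, which produces the error $h(\eps/(4\tilde M+1))$. Your extra remark on the $\lambda^{p-2}$ time-compression bookkeeping is a correct elaboration of a detail the paper absorbs into the constant $c(p,\tilde M)$, so the argument is sound.
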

\begin{proof}
We can rescale $v$, solution to \eqref{the equation for v.bnd}, as we rescaled the solution to \eqref{Approx.CD.reg} in Paragraph \ref{rescaling}, with $\lambda: = 4\tilde M+1$. Then \eqref{e.g_quantitative} implies
\[
\osc_{\wt Q_r^{\omega(\cdot)}(x_0,t_0)} \hat g \leq \frac14 \omega(r)\,.
\]
Going back to \eqref{e.cond.on.tildeg} in the proof of Proposition~\ref{p.equicont} we see that  $2\osc_{Q_j} \hat g \leq \omega_{j+1}$ is always true due to the condition~\eqref{e.g_quantitative} and the definitions in \eqref{def.iter} (in particular, $\omega_{j}:= \omega(R_j)$). We are still in position to apply Proposition~\ref{p.equicont} to the solution $\hat v$ of \eqref{Approx.CD.reg} since $\int_\R \tilde H'(\sigma)\,d\sigma=\lambda^{-1}\leq1$; indeed, all the proofs of Section \ref{sec.three} are based only on the properties in \eqref{support.H} of $H_{b,\eps}$. Thus we obtain \eqref{e.ind.k} for any $j\in\N_0$ and then \eqref{e.iiiii} for $\hat v$ using an argument  analogous to the one in the proof of Proposition~\ref{p.equicont}.  Scaling back to $v$ gives \eqref{e.llll}.
\end{proof}

\section{The convergence proof}
In this section we conclude the proof of our main theorems. We first show that our approximants converge to a continuous function which is a physical solution of the problem thus proving Theorem \ref{main}. Then we see that the solution we built has the modulus of continuity \eqref{choice.modulus.boundary}, which gives Theorem \ref{boundary.continuity}.
\subsection{The Ascoli-Arzel\`a-type argument}\label{AAArgument}
We recall that $u_\eps$ solves the regularized Cauchy-Dirichlet probem
\[
\begin{cases}
\partial_t \big[\beta(u_\eps)+H_{a,\eps}(\beta (u_\eps))\big] - \mathrm{div}\, \mathcal{A}(x,t,u_\eps,Du_\eps) = 0\,,\qquad&\text{in $\Omega_T$},\\[5pt] 
u_\eps= g&\text{on $\partial_p\Omega_T$},
\end{cases}
\]
where the regularization of $H_a$ has been defined in \eqref{regularized}. 

\vspace{3mm} 

Note that by the maximum principle we have 
\begin{equation}\label{unibound}
\sup_{\Omega_T}|u_\eps|\leq\sup_{\partial_p\Omega_T} |g|
\end{equation}
independently of $\eps$; moreover $u_\eps$ is continuous up to the boundary and it has an ``equi-almost-uniform'' modulus of continuity in the following sense:  there exists a modulus of continuity $\bar\omega:[0,\infty)\to[0,\infty)$, concave and continuous, such that $\bar \omega(0)=0$ and for every $z,z'\in \overline{\Omega}_T$ and $\eps\in(0,1]$ it holds
\begin{equation}\label{equi.modulus}
|u_\eps(z)-u_\eps(z')|\leq\bar\omega(|z-z'|)+ h(\eps),
\end{equation}
the function $h(\cdot)$ having the property that it vanishes as $\eps\to0$.

To prove \eqref{equi.modulus}, first we define $w_\eps=\beta(u_\eps)$ as in Paragraph \ref{sec: approx}. Then we further rescale as in Paragraph \ref{rescaling} with $(x_0,t_0)=(0,0)$ and $\lambda:=\max\{\beta(2\|g\|_{L^\infty}),1\}$; call the rescaled function $v$ instead of $\hat v$. In fact $v$ solves \eqref{the equation for v.bnd} with $b=a/\lambda$ and $\Omega_T$ replaced by $\Omega\times(0,\lambda^{p-2}T)$, and the normalization condition \eqref{normalization} is clearly satisfied. We can thus make use of Proposition \ref{p.equicont} say, and this in turn yields 
\[
|v(z)-v(z')|\leq\bar\omega(|z-z'|)+ h(\eps);
\]
for some modulus of continuity $\bar\omega(\cdot)$; we can take for $|z-z'|$ the Euclidean distance in $\R^{n+1}$ without loss of generality, by suitably modifying $\bar\omega(\cdot)$.  Note that we have to use \eqref{unibound} too. This, in view of the Lipschitz regularity of $\beta$ yields \eqref{equi.modulus} where we avoided relabeling the quantities on the right-hand side. 

Call now $u_i, i\in\N$, the function $u_{\eps_i}$ for the choice $h(\eps_i)  \leq 1/i$; the sequence $\{u_i\}$ is equibounded thanks to \eqref{unibound} and, taking into account \eqref{equi.modulus}, satisfies
\begin{equation}\label{cont.partic}
|u_i(z)-u_i(z')|\leq \bar \omega\big(|z-z'|\big)+1/i
\end{equation}
for any $z,z'\in \overline{\Omega}_T$. If we consider the numerable dense subset $\mathcal S:=\overline{\Omega}_T\cap\mathbb Q^{n+1}$, by a standard diagonal argument, as a consequence of \eqref{unibound}, we extract a subsequence, still denoted by $\{u_i\}_{i\in\mathbb N}$, converging pointwise in $\mathcal S$ to $u$.  Moreover, slightly modifying the proof of Ascoli-Arzel\`a (see for instance the proof given in \cite[Page 17]{DiBePDE}), using condition \eqref{cont.partic} instead of equi-continuity, we show that the sequence $\{u_i\}_{i\in \N}$ actually converges pointwise in $\overline{\Omega}_T$ to a function which we shall call $u$, and moreover, by a similar argument, the convergence is uniform. In particular, $uÊ\in C^0(\overline{\Omega}_T)$.  The rest of the proof will be devoted in proving that $u$ is a local weak solution of \eqref{the general equation}. Assuming this for a moment, we now prove Theorem \ref{boundary.continuity}.

\subsection*{Proof of Theorem \ref{boundary.continuity}}
The constant $\vartheta$ is the one in Proposition \ref{interior.continuity.iterated} while $\lambda_0$ has been fixed in Lemma \ref{claim.Tuomo}. The constant $\tilde\delta\in(0,1)$ is defined, according to Proposition \ref{interior.continuity.iterated}, as 
\[
\tilde\delta^{2-p}:=\wt\omega_0^{1-p}=\tau^{1-p}\exp\left((p-1)\tau^{-1/\alpha}\right).
\]
To prove \eqref{modulus.u} we distinguish two cases. If $\omega_0\leq 1$, then \eqref{ass.ug}$_1$ directly implies that $\osc_{\tilde Q} u\leq 1$ with $\tilde Q:=(B_{R_0}(x_0)\times (t_0-\tilde\delta^{2-p}R_0^p, t_0+\tilde\delta^{2-p}R_0^p))\cap\Omega_T$ and we recognize that, by our choice of $\tilde\delta$, the cylinder $\tilde Q$ is exactly the cylinder $Q^0$ appearing in Proposition \ref{interior.continuity.iterated}. In the proof of Proposition \ref{p.equicont} we can clearly replace the renormalization in \eqref{normalization} with this local information, which is sufficient to start the iteration. On the other hand, jumping to Proposition \ref{p.equicont2},  we note that for $\gamma\in(0,1)$ as in \eqref{ass.ug}$_2$, $\wt\omega(r)^{1-p} \leq c_\gamma (r/R_0)^{-p\gamma}$, $c_\gamma$ depending on data, $\gamma$ and $R_0$, so
\begin{align*}
 \osc_{\wt Q_{r}^{\omega(\cdot)}(x_0,t_0)}\tilde g &\leq\osc_{B_r(x_0)\times(t_0-c_\gamma (r/R_0)^{p(1-\gamma)}, t_0+c_\gamma (r/R_0)^{p(1-\gamma)})} \tilde g\\
& \leq c(c_\gamma,R_0)\tilde M \omega_g\big((r/R_0)^{1-\gamma}\big)\leq c\,\omega(r)\,,
\end{align*}
and \eqref{e.g_quantitative} is satisfied. Thus we have \eqref{e.llll} at hand for $v=\beta(w_{1/i})$ and, after passing to the limit as $\eps=1/i\searrow 0$, we infer \eqref{modulus.u}.

In the case $\omega_0> 1$, the proof is exactly the same except for the fact that before starting we rescale $u,g$ to $\hat u,\hat g$ as in Paragraph \ref{rescaling} with $\lambda=\omega_0$. We again obtain $\osc_{Q^0}v\leq 1$ and \eqref{e.g_quantitative}, and we conclude by invoking \eqref{e.llll} and scaling back to $u$. Note that $Q_r^{\omega_0}\supset Q_{\omega_0^{(2-p)/p}r}$ and \eqref{modulus.u} holds trivially if $r>\omega_0^{(2-p)/p}R_0$.
\vs
 
In the following paragraph we show that the pointwise limit $u$ is a physical solution to our problem, that is, it satisfies the weak formulation of Definition \ref{deff}. 
\subsection{Convergence away from the jump}
We consider the previously defined sequence $\{u_i\}_{i\in\N}$ which converges uniformly in $\Omega_T$ to $u$. Here it is more convenient to work with $w_i:=\beta(u_i)$, which solves
\begin{equation}\label{final.equation}
\partial_t w_i - \mathrm{div} \, \bar{\mathcal{A}}(x,t,w_i,Dw_i)  = -\partial_t H_{a,\eps_i}(w_i)
\end{equation}
locally in $\Omega_T$, with $\bar{\mathcal A}$ having the same structure as $\mathcal A$, see \eqref{eq:mollified vf}. Note that also $\{w_i\}$ converges uniformly to $w=\beta(u)$. The reader might recall now that
\[
{\rm supp}\, H_{a,\eps_i}'(\cdot)\subset(a-\eps_i,a+\eps_i);
\]
hence, in the set $\Omega_T\cap \{|w_i-a|\geq \eps_i\}$ $w_i$ is a solution to a $p$-Laplacian-type equation 
\begin{equation}\label{plap}
\partial_t w_i -  \mathrm{div}\, \bar{\mathcal{A}}(x,t,w_i,Dw_i) =0. 
\end{equation}
Now we fix $\sigma>0$. By the uniform convergence, there exists $\bar n=\bar n(\sigma,\omega)$ such that
\[
\Omega_T\cap \{|w-a|\geq 2\sigma\}\subset \Omega_T\cap \{|w_i-a|\geq \sigma\}\subset \Omega_T\cap \{|w_i-a|\geq \eps_i\}
\]
for all $i \geq\bar n$; indeed 
\[
|w_i(x)-a|\geq |w-a|-|w_i(x)-w(x)|\geq2\sigma-\sigma=\sigma
\]
if (and independently of) $x\in\Omega_T\cap \{|w-a|\geq 2\sigma\}$ and if $i\geq\bar n$ is large enough, by the uniform convergence of $w_i$ to $w$. Hence $w_i$ is a solution to an evolutionary $p$-Laplacian-type equation in $\Omega_T\cap \{|w-a|\geq 2\sigma\}$ for all $i\geq\bar n$. Therefore using an argument similar to the proof of \cite[Theorem 5.3]{Kuusi10}, we find not only that $\{w_i\}_{i\geq\bar n}$ converges to $w$ uniformly in $\Omega_T\cap \{|w-a|\geq 2\sigma\}$, but also $Dw_i\to Dw$ almost everywhere in this set (and moreover $Dw\in L^p_{\rm loc}(\Omega_T\cap \{|w-a|\geq 2\sigma\})$); we shall sketch the proof in the next Paragraph. At this point, using a diagonal argument, we get that there exists a subsequence of the $\{w_i\}$ defined in Paragraph \ref{AAArgument}, still denoted by $\{w_i\}$, such that $w_i$ converges uniformly to $w$ in $\Omega_T$ and moreover
\[
Dw_i\to Dw\qquad\text{almost everywhere in ${|w-a|>0}$.}
\]
    
\subsection{Almost everywhere convergence of the gradients.}
We give here a short proof of the statement about the almost everywhere convergence of the gradients in the previous Paragraph. We only give a hint of the classic proof and refer to \cite{BocGal2, BocDGO, Kuusi10} for more details.

\vs

Take two concentric cylinders $\tilde{\mathcal Q}\Subset\mathcal Q\Subset\Omega_T\cap \{|w-a|\geq 2\sigma\}$, two functions $w_j,w_k$ of the sequence $\{w_i\}$ and a ``small'' number $\varsigma>0$. We test respectively the weak formulations of \eqref{plap} for $w_j$ and $w_k$ with the functions
\begin{equation}\label{testtruncation}
\varphi_\mp(x,t):=(\varsigma\mp T_\varsigma(w_j-w_k))\phi^p, \quad\ \text{where}\quad\  T_{\varsigma}(s):=\min\big\{\max\{s,-\varsigma\},\varsigma\big\}
\end{equation}
is the usual truncation function of \cite{BocGal2}, $\phi \in V^{2,p}_{\loc}(\mathcal Q)$ with $\partial_t\phi\in L^2_{\loc}(\mathcal Q)$, $\phi(\cdot,T)\equiv0$ and $\phi\equiv 1$ in $\bar{\mathcal Q}$. Note that $\varphi_\mp$ are admissible since $s\to T_{\varsigma}(s)$ is a Lipschitz mapping and that this is actually a formal choice, due to the fact that these test functions do not have the needed time regularity. However, in \cite[Proof of Theorem 5.3]{Kuusi10} it is shown how to appropriately perform this delicate double limiting procedure. Note that using the bi-Lipschitz relation \eqref{bilipischitz} we infer, from \eqref{p.laplacian.assumptions} and \eqref{continuityA}, that
\begin{equation}\label{split.widetilde}
\begin{split}
\langle\bar{\mathcal{A}}(x,t,u,\xi)-\bar{\mathcal{A}}(x,t,u,\zeta),\xi-\zeta\rangle>0\,,\qquad\qquad\quad  \\[3pt]
\sup_{(x,t)\in\Omega_T}|\bar{\mathcal A}(x,t,u,\xi)- \bar{\mathcal A}(x,t,v,\xi)|\leq \bar K(M,\tilde M)\,\omega_{\bar{\mathcal A},u}\big(|u-v|\big)
\end{split}
\end{equation}
for almost every $(x,t) \in \Omega_T$ and for all $(u,v,\xi,\zeta) \in\R^{2(n+1)}$, with $\zeta\neq\xi$ for the monotonicity condition and $|u|+|v|\leq M$ and $|\xi|\leq \tilde M$ for the continuity one; note that we also used the concavity of $\omega_{\mathcal A,\xi}$. Indeed
\begin{align*}
\omega_{\mathcal A,\xi}\biggl(|\xi|\Bigl|\frac{1}{\beta'(\beta^{-1}(u))}-\frac{1}{\beta'(\beta^{-1}(v)))}\Bigr|\biggr)&\leq \tilde M\omega_{\mathcal A,\xi}\big(\Lambda^2\bar K(M)\omega_{\beta'}(\Lambda|u-v|)\big)\\
&\leq \tilde \Lambda^3M\bar K(M)\omega_{\mathcal A,\xi}\big(\omega_{\beta'}(|u-v|)\big)\,,
\end{align*}
where $\bar K(M)\omega_{\beta'}$ is clearly the concave modulus of continuity for $\beta'$ when $u,v$ vary in the compact set $|u|+|v|\leq M$. Now this choice, after some algebraic manipulations (performed in detail in the aforementioned Proof), leads to
\begin{align}\label{small.varsigma}
& \int_{\mathcal Q\cap \{|w_j-w_k|\leq\varsigma\}}\langle\bar{\mathcal A}(\cdot,\cdot,w_j,Dw_j)-\bar{\mathcal A}(\cdot,\cdot,w_k,Dw_k),D(w_j-w_k)\rangle\phi^p\dx\dt\notag\\
&\quad\leq- \int_{\mathcal Q\cap \{|w_j-w_k|\leq\varsigma\}}\langle\bar{\mathcal A}(\cdot,\cdot,w_j,Dw_j)-\bar{\mathcal A}(\cdot,\cdot,w_k,Dw_k),D\phi^p\rangle(w_j-w_k)\dx\dt\notag\\
&\quad\quad\quad+c\,\varsigma+\varsigma \int_{\mathcal Q\cap \{|w_j-w_k|\leq\varsigma\}}\langle\bar{\mathcal A}(\cdot,\cdot,w_j,Dw_j)+\bar{\mathcal A}(\cdot,\cdot,w_k,Dw_k),D\phi^p\rangle\dx\dt\notag\\
&\quad\leq c\,\varsigma,
\end{align}
where the constant ultimately depends upon $p,\Lambda,|\mathcal Q|,\sup_{\partial_p\Omega_T}|g|,\phi$ (hence on $\bar{\mathcal Q}$ and $\mathcal Q$) but not on $j,k$. In the last inequality we took into account the growth condition in \eqref{tilde inf}, the standard energy estimate for the $p$-Laplacian equation \eqref{plap} together with the uniform bound \eqref{unibound}.

\vs

The goal here is to prove that the sequence $\{Dw_i\}_{i\in\N}$ converges in measure, being a Cauchy sequence with respect to this convergence. This together with the fact that the gradients are uniformly bounded in the $L^p$ norm -- and this follows again by the Caccioppoli's estimate and \eqref{unibound} -- would then lead to the needed almost everywhere convergence. To this aim we define, for $\varsigma$ as above and $\rho,\lambda>0$ the sets
\begin{align*}
E_{j,k}^\rho&:= \big\{z\in\tilde{\mathcal Q}:|Dw_j(z)-Dw_k(z)|\geq\rho\big\};\\
U_{j,k}^\varsigma&:=\big \{z\in\tilde{\mathcal Q}:|w_j(z)-w_k(z)|\leq\varsigma\big\};\\
V_{j,k}^\lambda&:=\big \{z\in\tilde{\mathcal Q}:\max\{|Dw_j(z)|,|Dw_k(z)|\}\leq\lambda\big\}.
\end{align*}
First note that, enlarging appropriately the domains of integration, we infer
\begin{align}\label{one.passage.more}
 \int_{E_{j,k}^\rho\cap U_{j,k}^\varsigma\cap V_{j,k}^\lambda}&\langle\bar{\mathcal A}(\cdot,\cdot,w_j,Dw_j)-\bar{\mathcal A}(\cdot,\cdot,w_j,Dw_k),D(w_j-w_k)\rangle\phi^p\dx\dt\notag\\ 
& \hspace{-1cm}\leq \int_{\mathcal Q\cap\{|w_j-w_k|\leq\varsigma\}}\langle\bar{\mathcal A}(\cdot,\cdot,w_j,Dw_j)-\bar{\mathcal A}(\cdot,\cdot,w_k,Dw_k),D(w_j-w_k)\rangle\phi^p\dx\dt\notag\\
&+|\mathcal Q|\bar K(2\|g\|_{L^\infty},\lambda)\omega_{\bar{\mathcal A},u}(\varsigma)\notag\\
&\hspace{-1cm}\leq c\,\big(\varsigma+\bar K(2\|g\|_{L^\infty},\lambda)\omega_{\bar{\mathcal A},u}(\varsigma)\big)
\end{align}
by \eqref{split.widetilde} together with \eqref{unibound} and \eqref{small.varsigma}, for an appropriate test function equal to one on $\tilde{\mathcal Q}$. In order to prove that the sequence $\{Dw_i\}_{i\in\N}$ is a Cauchy sequence with respect to the convergence in measure, that is, that for any $\rho>0$, once we fix $\epsilon>0$ we can find $\bar n\equiv \bar n(\epsilon)$ such that $|E_{j,k}^\rho|\leq \epsilon$ for all $j,k\geq\bar n$, we then split
\[
|E_{j,k}^\rho|\leq |E_{j,k}^\rho\cap U_{j,k}^\varsigma|+|\tilde{\mathcal Q}\smallsetminus U_{j,k}^\varsigma|\leq |E_{j,k}^\rho\cap U_{j,k}^\varsigma\cap V_{j,k}^\lambda|+|\tilde{\mathcal Q}\smallsetminus U_{j,k}^\varsigma|+|\tilde{\mathcal Q}\smallsetminus V_{j,k}^\lambda|
\]
for appropriate $\varsigma,\lambda>0$ that will be chosen in the follwing lines. Notice now that since $\{w_i\}$ converges uniformly, then in particular it is a Cauchy sequence in $\tilde{\mathcal Q}$ and hence $|\tilde{\mathcal Q}\smallsetminus U_{j,k}^\varsigma|\leq \epsilon/3$ provided we take $j,k$ large enough. Moreover, since the sequence $\{Dw_i\}$ is bounded in $L^p$, then $|\tilde{\mathcal Q}\smallsetminus V_{j,k}^\lambda|\leq \epsilon/3$ for $\lambda$ large enough; hence we can restrict now our attention on the set $E_{j,k}^\rho\cap U_{j,k}^\varsigma\cap V_{j,k}^\lambda$. We consider the set
\[
\mathcal K_{(x,t)}^{\lambda, \rho}:=\big\{(u,\xi,\zeta)\in\R^{2n+1}:|u|\leq \|g\|_{L^\infty},|\xi|,|\zeta|\leq \lambda,\ |\xi-\zeta|\geq\rho\big\} 
\]
(notice that $\lambda$ and $\rho$ are fixed) and we consider, for $(x,t)\in\tilde{\mathcal Q}$, the function 
\[
\gamma(x,t):=\inf_{\mathcal K_{(x,t)}^\lambda}\langle\bar{\mathcal A}(x,t,u,\xi)-\bar{\mathcal A}(x,t,u,\zeta),\xi-\zeta \rangle;
\]
by the continuity of $(u,\xi)\mapsto\bar{\mathcal A}(\cdot,\cdot,u,\xi)$, the compactness of $\mathcal K_{(x,t)}^{\lambda, \rho}$ and the monotonicity of $\bar{\mathcal A}$ in \eqref{split.widetilde}, we infer that $\gamma(x,t)>0$ for almost every $(x,t)\in \tilde{\mathcal Q}$. By \eqref{one.passage.more} we then have
\begin{align*}
|E_{j,k}^\rho\cap U_{j,k}^\varsigma\cap V_{j,k}^\lambda|\mean{E_{j,k}^\rho\cap U_{j,k}^\varsigma\cap V_{j,k}^\lambda}\gamma\dx\dt&\leq\int_{E_{j,k}^\rho\cap U_{j,k}^\varsigma\cap V_{j,k}^\lambda}\gamma\dx\dt\\
&\leq c\,\big(\varsigma+\bar K(2\|g\|_{L^\infty},\lambda)\omega_{\bar{\mathcal A},u}(\varsigma)\big)
 \end{align*}
and since $\gamma>0$ a.e. in $\tilde{\mathcal Q}$, we conclude with $|E_{j,k}^\rho\cap U_{j,k}^\varsigma\cap V_{j,k}^\lambda|\leq \epsilon/3$ for $\varsigma$ small enough (recall $\lambda$ has been already fixed). Hence we have proved that $\{Dw_i\}$ is a Cauchy sequence with respect to the convergence in measure. This, together with the convergence of $\{w_i\}$, yields that $\{Dw_i\}$ actually converges to $Dw$ in measure and hence there exists a subsequence which converge almost everywhere. Since the argument above actually holds for every subsequence, then almost everywhere convergence takes place  for the full sequence $\{Dw_i\}$. Finally, since $\tilde{\mathcal Q}$ is an arbitrary compactly contained subset of $\Omega_T\cap \{|w-a|\geq 2\sigma\}$ and the whole sequence converges almost everywhere, we have almost everywhere convergence in the whole $\Omega_T\cap \{|w-a|\geq 2\sigma\}$. The fact that $Dw \in L^p(\Omega_T\cap \{|w-a|\geq 2\sigma\})$ now simply follows by Lebesgue's dominated convergence Theorem and by the fact that the sequence $Dw_i$ is equibounded in $L^p$.

\subsection{Convergence near the jump}
In order to infer information on the behavior of the gradient of the approximating solutions in the set close to $\{w=a\}$, we (formally) test the equation~\eqref{final.equation} with the function 
$\varphi=T_{2\sigma}(w_i-a)\phi^p$, for some fixed $\sigma\in(0,1)$, where $T_{2\sigma}$ has been defined in \eqref{testtruncation} and $\phi \in C^\infty_c(\Omega_T)$. The rigorous treatment needs a mollification in time, see~\cite{BKU} for details. We note that since
\[
D\varphi=Dw_i\chi_{\{|w_i-a|\leq 2\sigma\}}\phi^p+p\,\phi^{p-1}T_{2\sigma}(w_i-a)D\phi,
\]
we have, with $\mathcal H$ as in \eqref{Acca}
\begin{multline*}
 \int_{\Omega_T\cap \{|w_i-a|\leq 2\sigma\}}{|Dw_i|}^p\phi^p\dx\dt \leq c(p,\Lambda)\sigma^p\,\|D\phi\|_{L^p}^p\\
+c(p,\Lambda)\int_{\Omega_T}   \int_a^{w_i} \mathcal H^\prime (\xi) T_{2\sigma}(\xi-a)\, d\xi\, \partial_t(\phi^p) \dx \dt.
\end{multline*}
For the parabolic term we formally have
\begin{align*}
\int_{\Omega_T} \partial_t w_i \,\mathcal H' (w_i) &T_{2\sigma}(w_i-a) \phi^p \dx \dt \\
&=\int_{\Omega_T} \partial_t \biggl[ \int_a^{w_i} \mathcal H^\prime  (\xi) T_{2\sigma}(\xi-a)\, d\xi \biggr]
\phi^p \dx \dt \\
&=\int_{\Omega}   \Big[\int_a^{w_i} \mathcal H^\prime (\xi) T_{2\sigma}(\xi-a)\, d\xi\,\phi^p \Big](\cdot,T)\dx\\
&\qquad\qquad-\int_{\Omega_T}   \int_a^{w_i} \mathcal H^\prime (\xi) T_{2\sigma}(\xi-a)\, d\xi\, \partial_t(\phi^p) \dx \dt.
\end{align*}
We can discard the first term since
\[
\int_a^{w_i(x,T)} \mathcal H^\prime (\xi) T_{2\sigma}(\xi-a)\, d\xi \geq0,
\]
the mapping $\xi\mapsto T_{2\sigma}(\xi-a)$ being odd with respect to the point $\xi=a$ and $\mathcal H^\prime(\cdot)\geq0$. Moreover,
\begin{align*}
\int_{\Omega_T}   \int^{w_i}_a \mathcal H^\prime (\xi) &T_{2\sigma}(\xi-a)\, d\xi\, \partial_t(\phi^p) \dx \dt\\
&\leq c(n,p)\sigma \|\partial_t(\phi^p)\|_{L^1(\Omega_T)}\,\big(\|w_i\|_{L^\infty(Q_{2R})}+a_++1\big)\\
&\leq c\big(n,p)\sigma \|\partial_t(\phi^p)\|_{L^1(\Omega_T)}\,\big(\|\beta(g)\|_{L^\infty(\partial_p\Omega_T)}+a_++1\big) 
\end{align*}
by the fact that $|T_{2\sigma}(\cdot)|\leq2\sigma$ and \eqref{unibound}. Thus, also in view of \eqref{unibound}, we can finally estimate
\begin{equation}\label{big.effort}
\int_{\Omega_T\cap \{|w_i-a|\leq 2\sigma\}}{|Dw_i|}^p\phi^p\dx\dt\leq c\,\sigma,
\end{equation}
where $c$ depends on $n,p,\Lambda,a,\|g\|_{L^\infty(\partial_p\Omega_T)},\beta$, and the test function $\phi$. 

\subsection{Passing to the limit}
We now want to pass to the limit in the weak formulation of \eqref{final.equation}; this, once fixed $\sigma\in(0,1)$, $\mathcal K\Subset\Omega$, $[t_1,t_2]\subset [0,T]$ and a test function $\varphi$ as in Definition \ref{deff}, reads as
\begin{multline}\label{app.ML}
0= \int_{\mathcal K_{t_1,t_2}\cap\{|w_i-a|>\sigma\}}\langle\bar{\mathcal A}(\cdot,\cdot,w_i,Dw_i),D\varphi\rangle\dx\dt\\
- \int_{\mathcal K_{t_1,t_2}}\big[w_i+ H_{a,\eps_i}(w_i)\big]\partial_t\varphi\dx\dt\\
+ \int_{\mathcal K_{t_1,t_2}\cap\{|w_i-a|\leq\sigma\}}\langle\bar{\mathcal A}(\cdot,\cdot,w_i,Dw_i),D\varphi\rangle\dx\dt\\
 +\int_{\mathcal K}\big\{\big[w_i+ H_{a,\eps_i}(w_i)\big]\,\varphi\big\}(\cdot,\tau) \dx\biggr|_{\tau=t_1}^{t_2}.
\end{multline}
By the continuity of $(\mu,\xi)\mapsto\mathcal A(\cdot,\cdot,\mu,\xi)$, the first term converges to 
\[
 \int_{\mathcal K_{t_1,t_2}\cap\{|w-a|>\sigma\}}\langle \mathcal A(\cdot,\cdot,u,Du),D\varphi\rangle\dx\dt
\]
as $i\to \infty$; indeed $\bar{\mathcal A}(\cdot,\cdot,w_i,Dw_i)=\mathcal A(\cdot,\cdot,u_{i},Du_{i})$. The second and the fourth terms converge to 
\[
 - \int_{\mathcal K_{t_1,t_2}}\xi\,\partial_t\varphi\dx\dt +\int_{\mathcal K} \left[ \xi\,\varphi \right] (\cdot,\tau) \dx\biggr|_{\tau=t_1}^{t_2},
\]
where $\xi$ belongs to the graph $\beta(u)+H_a(\beta(u))$ (in particular, $\xi=1/2$ if $\beta(u)=0$). Finally, by uniform convergence we can find $\bar n$, depending on $\omega$ and $\sigma$, such that 
\[
{\mathcal K_{t_1,t_2}\cap\{|w-a|\leq\sigma\}}\subset {\mathcal K_{t_1,t_2}\cap\{|w_i-a|\leq2\sigma\}}\qquad \text{for any $i\geq\bar n$.}
\]
Hence we can bound, for $i\geq\bar n$,
\begin{align}\label{Tdelta}
|\mathcal T_{i,\sigma}|:=\biggl|&\int_{\mathcal K_{t_1,t_2}\cap\{|w-a|\leq\sigma\}}\langle\bar{\mathcal A}(\cdot,\cdot,w_i,Dw_i),D\varphi\rangle\dx\dt\biggr| \notag\\
&\qquad\leq c(\Lambda) {\|D\varphi\|}_{L^p(\mathcal K_{t_1,t_2})}\bigg(\int_{\Omega_T\cap\{|w_i-a|\leq2\sigma\}}{|Dw_i|}^p\chi_{{\rm supp}\,\varphi}\dx\dt\bigg)^{1/p'}\notag\\
&\qquad\leq c\, {\|D\varphi\|}_{L^p(\mathcal K_{t_1,t_2})}\biggl(\int_{\Omega_T\cap\{|w_i-a|\leq2\sigma\}}{|Dw_i|}^p\phi^p\dx\dt\biggr)^{1/p'}\notag\\
&\qquad\leq c\,{\|D\varphi\|}_{L^p(\mathcal K_{t_1,t_2})}\sigma^{1/p'}
\end{align}
by \eqref{big.effort}, with $c\equiv c(p,\Lambda)$; here we must take $\phi$ as an appropriate cut-off function, equal to one on the support of $\varphi$. Hence if we pass to the limit (superior) $i\to \infty$ in \eqref{app.ML} with $\sigma\in(0,1)$ fixed, by rearranging terms we get
\begin{multline*}
- \int_{\mathcal K_{t_1,t_2}}\xi\,\partial_t\varphi\dx\dt+ \int_{\mathcal K_{t_1,t_2}\cap\{|\beta(u)- a|>\sigma\}}\langle {\mathcal A}(\cdot,\cdot,u,Du),D\varphi\rangle\dx\dt+\mathcal T_\sigma\\
 +\int_{\mathcal K} \left[ \xi\,\varphi \right] (\cdot,\tau) \dx\biggr|_{\tau=t_1}^{t_2}=0, 
\end{multline*}
with $\xi \in \beta(u)+H_a(\beta(u))$, since the equibounded sequence $\mathcal T_{i,\sigma}$ converges to a limit (possibly only superior) $\mathcal T_\sigma$ such that $|\mathcal T_\sigma|\leq c\,\sigma^{1/p'}$ by \eqref{Tdelta}. Now we take the limit $\sigma\to0$ and we get
\begin{multline*}
-\int_{\mathcal K_{t_1,t_2}}\xi\,\partial_t\varphi\dx\dt+\int_{\mathcal K_{t_1,t_2}\cap\{\beta(u)\neq a\}}\langle{\mathcal A}(\cdot,\cdot,u,Du),D\varphi\rangle\dx\dt\\
 +\int_{\mathcal K} \left[ \xi\,\varphi \right] (\cdot,\tau) \dx\biggr|_{\tau=t_1}^{t_2}=0.
\end{multline*}
Using well-known properties of Sobolev functions and \eqref{bilipischitz}, the second integral is equal to the integral over $\mathcal K_{t_1,t_2}$ of the same function. Hence, we have proved that the pointwise limit defined in Paragraph \ref{AAArgument} is a local weak solution to \eqref{the general equation} in the sense of Definition \ref{deff}.

\vspace{5mm}

{\small {\bf Acknowledgments:} This paper was partially conceived while the authors were part of the research program ``Evolutionary Problems'' at the Institut Mittag-Leffler (Djursholm, Sweden) in the Fall 2013. The support, the hospitality and the optimal environment of the Institut is gratefully acknowledged. The paper was concluded after visits of PB and JMU to Aalto University and of TK to the University of Coimbra. The authors are grateful to both institutions.

PB has been supported by the Gruppo Nazionale per l'Analisi Matematica, la Probabilit\`a e le loro Applicazioni (GNAMPA) of the Istituto Nazionale di Alta Matematica (INdAM). TK and CL were supported by the Academy of Finland project ``Regularity theory for nonlinear parabolic partial differential equations". CL has also been supported by the Vilho, Yrj\"o and Kalle V\"ais\"al\"a Foundation. JMU was partially supported by CMUC -- UID/MAT/00324/2013, funded by the Portuguese Government through FCT/MCTES and co-funded by the European Regional Development Fund through the Partnership Agreement PT2020.}


\begin{thebibliography}{99}

\bibitem{BKU}
{\sc P. Baroni, T. Kuusi, J.M. Urbano}:
A quantitative modulus of continuity fot the two-phase Stefan problem,
{\em Arch. Rational Mech. Anal.} {\bf 214} (2014), no. 2, 545--573.

\bibitem{BocGal2} {\sc L.~Boccardo and T. Gallou\"et}:
Nonlinear elliptic and parabolic equations involving measure data,
{\em J. Funct. Anal.} \textbf{87} (1989), no. 1, 149--169.

\bibitem{BocDGO} {\sc L. Boccardo, A. Dall'Aglio, T. Gallou\"et, L. Orsina}:
Nonlinear parabolic equations with measure data,
{\em J. Funct. Anal.} \textbf{147} (1997), no.1, 237--258.

\bibitem{DiB2}
{\sc E. DiBenedetto}:
Continuity of weak solutions to certain singular parabolic equations,
{\em Ann. Mat. Pura Appl. {\rm (4)}} {\bf 103} (1982), 131--176.

\bibitem{DiB3}
{\sc E. DiBenedetto}:
A boundary modulus of continuity for a class of singular parabolic equations,
{\em J. Differential Equations} {\bf 63} (1986), no. 3, 418--447.

\bibitem{DiBe93}
{\sc E. DiBenedetto}:
{\em Degenerate parabolic equations}, 
Universitext, Springer-Verlag, New York, 1993.

\bibitem{DiBePDE}
{\sc E. DiBenedetto}:
{\em Partial differential equations}, 
Birkh\"auser Boston, Boston, MA, 1995.

\bibitem{DiBeUrbaVesp04}
{\sc E. DiBenedetto, J.M. Urbano and V. Vespri}:
{\em Current issues on singular and degenerate evolution equations},
Evolutionary equations. Vol. I, Handb. Differ. Equ., 169--286, North-Holland, Amsterdam, 2004.

\bibitem{EvansGariepy} {\sc L. C. Evans, R. F. Gariepy}: 
	\newblock {\em Measure Theory and Fine Properties of Functions}, 
	\newblock Stud. Adv. Math., CRC Press, Boca Raton, FL, 1992.

\bibitem{Kuusi10}
{\sc R. Korte, T. Kuusi and M. Parviainen}:
A connection between a general class of superparabolic functions and supersolutions, 
{\em J. Evol. Equ.} {\bf 10} (2010), no. 1, 1--20.

\bibitem{KMWolff} 
{\sc T.~Kuusi, G.~Mingione}:
The Wolff gradient bound for degenerate parabolic equations,
{\em J. Eur. Math. Soc. {\rm(}JEMS\,{\rm)}} {\bf 16} (2014), no. 4, 835--892

\bibitem{Noche87}
{\sc R.H. Nochetto}:
A class of nondegenerate two-phase Stefan problems in several space variables,
{\em Comm. Partial Differential Equations} {\bf 12} (1987), no. 1, 21--45.

\bibitem{Salsa12}
{\sc S. Salsa}:
Two-phase Stefan problem. Recent results and open questions.
{\em Milan J. Math.} {\bf 80} (2012), no. 2, 267--281.

\bibitem{UCon}
{\sc J.M. Urbano}:
Continuous solutions for a degenerate free boundary problem,
{\em Ann. Mat. Pura Appl. {\rm (4)}} {\bf 178} (2000), 195--224. 

\bibitem{Urba08}
{\sc J.M. Urbano}:
{\em The method of intrinsic scaling, A systematic approach to regularity for degenerate and singular PDEs},
Lecture Notes in Mathematics, 1930, Springer-Verlag, Berlin, 2008.

\bibitem{Ziemer}
{\sc W.P. Ziemer}:
Interior and boundary continuity of weak solutions of degenerate parabolic equations,
{\em Trans. Amer. Math. Soc.} {\bf 271} (1982), no. 2, 733--748. 
\end{thebibliography}
\end{document}